\newtheorem{thm}{Theorem}[subsection]
\newtheorem{theorem}[thm]{Theorem}
\newtheorem{lemma}[thm]{Lemma}
\newtheorem{corollary}[thm]{Corollary}
\newtheorem{proposition}[thm]{Proposition}
\numberwithin{equation}{section} \theoremstyle{definition}
\newtheorem*{remark}{Remark}
\newtheorem*{remarks}{Remarks}
\newcommand{\dig}{{\rm diag}}
\newcommand{\diag}{{\rm diag}}
\newcommand{\Tr}{{\rm Tr}}
\newcommand{\B}{\mathcal{B}}
\newcommand{\U}{\mathcal{U}}
\newcommand{\A}{\mathcal{A}}
\newcommand{\I}{\mathcal{I}}
\newcommand{\oo}{\mathcal{O}_F}
\newcommand{\m}{^{\text{-1}}}
\newcommand{\ord}{{\rm ord}}
\newcommand{\PGL}{{\rm PGL}}
\newcommand{\GL}{{\rm GL}}
\newcommand{\PSL}{{\rm PSL}}
\newcommand{\Z}{\mathbb Z}
\newcommand{\G}{\Gamma}
\newcommand{\g}{\gamma}
\newcommand{\ka}{\kappa}
\newcommand{\ve}{\varepsilon}
\newcommand{\de}{\delta}
\newcommand{\vol}{{\rm vol}}
\begin{document}
\title[Zeta Functions of Complexes Arising from $\PGL(3)$]{Zeta Functions of Complexes Arising from $\PGL(3)$}
\author{Ming-Hsuan Kang and Wen-Ching Winnie Li}
\address{Ming-Hsuan Kang\\ Department of Mathematics\\ Purdue University\\
West Lafayette, IN 47907} \email{\tt kmsming@gmail.com}
\address{Wen-Ching Winnie Li\\ Department of Mathematics\\ The Pennsylvania State University\\
University Park, PA 16802} \email{\tt wli@math.psu.edu}

\thanks{The research of both authors are supported in part by the DARPA
grant HR0011-06-1-0012 and the NSF grants DMS-0457574 and DMS-0801096. Part of the research was performed
while both authors were visiting the National Center for Theoretical
Sciences, Mathematics Division, in Hsinchu, Taiwan. They would like
to thank the Center for its support and hospitality.}

\keywords{Bruhat-Tits building, zeta functions, discrete cocompact subgroups}

\subjclass[2000]{Primary: 22E35; Secondary: 11F70 }
\begin{abstract} In this paper we obtain a closed form expression of
the zeta function $Z(X_\G, u)$ of a finite quotient $X_\G = \G
\backslash \PGL_3(F)/\PGL_3(\oo)$ of the Bruhat-Tits building of
$\PGL_3$ over a nonarchimedean local field $F$. Analogous to a graph
zeta function, $Z(X_\G, u)$ is a rational function and it satisfies
the Riemann hypothesis if and only if $X_\G$ is a Ramanujan complex.
\end{abstract}
\maketitle

\section{Introduction}
First introduced by Ihara \cite{Ih} for groups and later
reformulated by Serre for regular graphs,
the zeta function of a finite,
connected, undirected graph $X$ is defined as
$$ Z(X,u) = \prod_{[C]} (1 - u^{l([C])})^{-1},$$
where the product is over equivalence classes $[C]$ of backtrackless
tailless primitive cycles $C$, and $l([C])$ is the length of a cycle
in $[C]$.\footnote{ A cycle has a starting point and orientation.
Two cycles are equivalent if one is obtained from the other by changing the starting vertex. A cycle is tailless if all cycles equivalent to it are backtrackless; it is primitive if it is not a repetition of a shorter cycle more than once.}
Taking the logarithmic derivative of $Z(X, u)$, one gets
$$Z(X,u) =  \exp \bigg(\sum_{n \ge 1} \frac{N_n}{n} u^n \bigg),$$ where
$N_n$ counts the number of backtrackless and tailless cycles in $X$
of length $n$.

Not only formally analogous to a curve zeta function, the graph zeta
function is also a rational function. This can be seen in two ways.
The first is the result of Ihara:

\begin{theorem}[Ihara \cite{Ih}]
Suppose $X = (V, E)$ with vertex set $V$ and edge set $E$ is
$(q+1)$-regular. Then its zeta function
 is a rational function of the form
$$ Z(X,u) = \frac{(1 - u^2)^{\chi(X)}}{\det(I - Au + qu^2I)},$$
where $\chi(X) = \#(V) - \#(E)$ is the Euler characteristic of $X$
and $A$ is the adjacency matrix of $X$.
\end{theorem}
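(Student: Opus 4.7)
The plan is to prove Ihara's theorem in two stages: first express $Z(X,u)$ as the reciprocal determinant of an operator acting on directed edges, and then convert this to an expression in the vertex adjacency matrix $A$ via a block-matrix determinant identity (Bass's identity).

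For the first stage, I would introduce the non-backtracking edge operator $B$ acting on the $2|E|$-dimensional space spanned by directed edges: set $B_{e,e'} = 1$ if the terminus of $e$ is the origin of $e'$ and $e' \neq \bar e$, and $B_{e,e'} = 0$ otherwise. A closed backtrackless tailless walk of length $n$ corresponds precisely to a cyclic sequence $(e_0,\dots,e_{n-1})$ of directed edges with $B_{e_i, e_{i+1 \bmod n}} = 1$ for every $i$, so $N_n = \mathrm{Tr}(B^n)$. Feeding this into $\log\det(I-uB)^{-1} = \sum_{n\ge 1}\mathrm{Tr}(B^n)u^n/n$ converts the exponential formula for $Z(X,u)$ into $Z(X,u) = \det(I-uB)^{-1}$.

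For the second stage, I would prove Bass's identity
$$\det(I-uB) = (1-u^2)^{|E|-|V|}\det(I-uA+qu^2 I),$$
which, since $\chi(X) = |V|-|E|$, rearranges to the theorem. Let $S$ and $T$ be the $|V|\times 2|E|$ origin- and terminus-incidence matrices, and let $J$ be the involution exchanging $e$ with $\bar e$. A short computation gives $ST^t = A$, $SS^t = TT^t = (q+1)I_V$, and $T^tS = B+J$. The identity then drops out by computing the determinant of a suitable $(|V|+2|E|)\times(|V|+2|E|)$ block matrix in two ways via the Schur complement formula: one expansion isolates $I-uA+qu^2 I$ in the vertex block, while the other isolates $I-uB$ in the edge block, up to the bookkeeping factor $(1-u^2)^{|E|-|V|}$ arising from an auxiliary $(1-u^2)I_{2E}$ that has to be inserted to absorb $J$ (which squares to $I$).

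The main obstacle is engineering the correct block matrix in the second stage. The relation $T^tS - J = B$ does not express $B$ as a direct incidence product, so the involution $J$ must be carried through the algebra; moreover, $I-uA+qu^2 I$ is a quadratic matrix pencil and does not emerge from a single linear Schur complement without additional scaffolding. The cleverness lies in arranging the off-diagonal blocks so that the factor $q+1$ produced by $SS^t$ combines with $-uST^t = -uA$ to yield precisely the $qu^2$ term; once the correct matrix is identified, both determinantal expansions are routine.
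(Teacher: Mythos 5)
Your proposal is mathematically sound: it is the standard Hashimoto--Bass proof of Ihara's theorem, and the incidence relations you list ($ST^t = A$, $SS^t = TT^t = (q+1)I_V$, $T^tS = B + J$, $J^2 = I$) are all correct and are exactly what is needed to run the Schur-complement computation. However, the paper does not actually prove this theorem; it is cited as background (references \cite{Ih}, \cite{Ba}, \cite{Ha2}, \cite{ST}, \cite{Ho}), and only the first stage of your argument---Hashimoto's observation that $N_n = \Tr A_e^n$, hence $Z(X,u) = 1/\det(I - A_e u)$---appears explicitly in the introduction.

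The comparison worth making is with how the paper proves the \emph{analogous} identity for $\PGL_3$ (the Main Theorem), since that is where the paper does the real work. There the Bass-style block-determinant trick is not used, and indeed does not obviously generalize: for the two-dimensional complex the left-hand side $\det(I - A_1 u + qA_2 u^2 - q^3 u^3 I)$ is a cubic pencil in two noncommuting Hecke operators, the ``edge'' and ``chamber'' operators $L_E$ and $L_B$ live on spaces of different sizes, and several incidence maps enter rather than a single source/target pair. Instead the paper proceeds combinatorially: it derives a recursive Hecke identity through the Satake isomorphism (Theorem 4.1.1), parametrizes based homotopy classes of cycles by conjugacy classes $[\g]$ in $\G$, and then counts algebraically tailless cycles of each type (split, unramified rank-one split, ramified rank-one split) to match the logarithmic derivative term-by-term. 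Your approach, had it worked, would have given a cleaner algebraic proof; the paper's approach is longer but carries over to the higher-rank setting where a single linear-algebra identity seems out of reach. For the rank-one case you are proving, your route is perfectly valid and is the standard one. One small caution: in the second stage you should be explicit about which block matrix you expand, since, as you note, the factor $(1-u^2)^{|E|-|V|}$ has to be tracked carefully (it arises because the $(1-u^2)I_{2E}$ factor absorbing $J$ has determinant $(1-u^2)^{2|E|}$, while the vertex-side Schur complement contributes $(1-u^2)^{|V|}$ in the other expansion).
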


If $X$ is not regular, the same expression holds with $qI$ replaced
by the valency matrix of $X$ minus the identity matrix. This
was proved by Bass \cite{Ba} and Hashimoto \cite{Ha2}; Stark and
Terras provided several proofs in \cite{ST}, while Hoffman \cite{Ho}
gave a cohomological interpretation. The reader is referred to
\cite{ST} and the references therein for the history and various
zeta functions attached to a graph.

Endow two orientations on each edge of $X$. Define the
neighbors of the directed edge $u \to v$ to be the edges $v \to w$ with $w \ne u$.
The edge adjacency matrix $A_e$ has its rows and columns indexed by
the directed edges $e$ of $X$ such that the $e e'$ entry is 1 if $e'$ is a neighbor
of $e$, and $0$ otherwise.
Hashimoto
\cite{Ha} observed that $N_n = \Tr A_e^n$ so that
$$Z(X, u) = \frac {1}{\det(I - A_e u)}.$$
This gives the second viewpoint of the rationality of the graph zeta
function.

 A $(q+1)$-regular graph $X$ is called {\it Ramanujan} if all
eigenvalues $\lambda$ of its adjacency matrix $A$ other than $\pm
(q+1)$ satisfy $|\lambda| \le 2 \sqrt q$ (cf. \cite{LPS}). The
Ramanujan graphs are optimal expanders with extremal spectral
property. It is easily checked that $X$ is Ramanujan if and only if
its zeta function $Z(X,u)$ satisfies the Riemann hypothesis, that
is, the poles of $Z(X,u)$ other than $\pm 1$ and $\pm q^{-1}$,
called nontrivial poles, all have absolute value $q^{-1/2}$ (cf.
\cite{ST}).

When $q$ is a prime power, the universal cover of a $(q+1)$-regular
graph can be identified with the $(q+1)$-regular tree on
$\PGL_2(F)/\PGL_2(\oo)$ for
 a nonarchimedean local field $F$ with ring of integers $\oo$ and $q$ elements in its
residue field. Let $\pi$ be a uniformizer of $F$. The vertices of
the tree are $\PGL_2(\oo)$-cosets and the directed edges are
$\I$-cosets, where $\I$ is the Iwahori subgroup of $\PGL_2(\oo)$.
Moreover, the (vertex) adjacency operator $A$ on the tree is the
Hecke operator given by the double coset $\PGL_2(\oo)\diag(1, \pi)
\PGL_2(\oo)$ and the edge adjacency operator $A_e$ is the
Iwahori-Hecke operator given by the double coset $\I \diag(1, \pi)
\I$. One obtains a $(q+1)$-regular graph by taking a left quotient
by a torsion-free discrete cocompact subgroup of $\PGL_2(F)$.

This set-up has a higher dimensional extension to the Bruhat-Tits
building $\B_n$ associated to $\PGL_n(F)/\PGL_n(\oo)$, which is a
simply connected $(q+1)$-regular $(n-1)$-dimensional simplicial
complex. Its vertices are $\PGL_n(\oo)$-cosets, naturally partitioned
into $n$ types, marked by $\mathbb Z/n\mathbb Z$.
 There are $n-1$
Hecke operators $A_i$, for $1 \le i \le n-1$,
 associated to $\PGL_n(\oo)$-double cosets represented by
$\dig( 1,..., 1,\pi, ..., \pi)$
 with determinant $\pi^i$. A finite
quotient $X_\G = \G \backslash \B_n$ of $\B_n$ by a torsion-free discrete
cocompact  subgroup $\G$ preserving the types of vertices is
again a $(q+1)$-regular finite complex. It is called a {\it
Ramanujan complex} if all the nontrivial eigenvalues of $A_i$ on
$X_\G$ fall within the spectrum of $A_i$ on the universal cover
$\B_n$. See \cite{Li} for more details. Three explicit constructions
of infinite families of Ramanujan complexes are given in Li
\cite{Li}, Lubotzky-Samuels-Vishne \cite{LSV1} and Sarveniazi
\cite{Sa}, respectively, using deep results on the Ramanujan
conjecture over function fields for automorphic representations of
the multiplicative group of a division algebra by
Laumon-Rapoport-Stuhler \cite{LRS} and of $\GL_n$ by Lafforgue
\cite{La}. Further, the paper \cite{LSV2} discusses what kind of
$\G$ would fail to yield a Ramanujan complex.

To extend the results from graphs to complexes, one seeks a
similarly defined zeta function of closed geodesics in
 $X_\G$ with the following properties:
\begin{itemize}
\item[(1)] it is a rational function with a closed form expression;

\item[(2)] it captures both topological and spectral information of
$X_\G$; and

\item[(3)] it satisfies the Riemann hypothesis if and only if
 $X_\G$ is a Ramanujan complex.
\end{itemize}
Questions of this sort were previously considered in Deitmar
\cite{De1}, \cite{De2}, and Deitmar-Hoffman \cite{DH}, where partial
results were obtained.

The purpose of this paper is to present a zeta function with the
asserted properties for the case $n = 3$. In what follows, we fix a
local field $F$ with $q$ elements in its residue field as before.
Write $G$ for $\PGL_3(F)$, $K$ for its maximal compact subgroup
$\PGL_3(\oo)$, and $\B$ for the Bruhat-Tits building $\B_3$. Similar to a tree,
the geometric objects in the building $\B$ can be parametrized algebraically.
More precisely, the vertices of $\B$ are the right $K$-cosets on which the group $G$
acts transitively by left translation. A directed edge in $\B$ has type $1$ or $2$, given by the type of the ending vertex minus
the type of the initial vertex. Thus opposite edges have different types.
Let $\sigma = \left( \begin{smallmatrix}  & 1 &  \\
& & 1 \\ \pi & & \end{smallmatrix}\right).$
As the stabilizer of the type $1$ edge $K \to \sigma K$, denoted by $e_0$, is
$E := K ~\cap~\sigma K\sigma^{-1}$, the right $E$-cosets parametrize the type $1$ edges of $\B$. The  Iwahori subgroup $ B:= K ~\cap~\sigma K\sigma^{-1}\cap ~\sigma^{-1}K\sigma$
stabilizes the three vertices $K$, $\sigma K$ and $\sigma^2K$ of the chamber $C_0$. Since the stabilizer of
$C_0$ in $G$ is $B \cup B\sigma \cup B \sigma^2$ and the type $1$ edges of $C_0$ are
$\sigma^i e_0$ for $1 \le i \le 3$,  the right
$B$-cosets parametrize the directed chambers $(C, e)$ of $\B$, where
$e$ is a type $1$ edge of the chamber $C$. Define the neighbors of a type $1$ edge
$gK \to g'K$ to be the type $1$ edges $g'K \to g''K$ such that $gK$, $g'K$ and $g''K$ do not form a chamber. Further, the neighbors $(C', e')$ of a directed chamber $(C, e)$ with $e$ the edge $g_1K \to g_2K$ and $ g_3K$ the third vertex of $C$ are defined as follows: $C'$ are the chambers other than $C$ which share
the edge $g_2K \to g_3K$ and the type $1$ edge $e'$ is from $g_3K$ to the third vertex of $C'$.
A geodesic between two vertices in $\B$ is a shortest path in the $1$-skeleton of $\B$. A path in $X_\Gamma$ is called a geodesic if any of its lifting in $\B$ is a geodesic.

The zeta function of $X_\G$ is defined as
$$ Z(X_\G,u) = \prod_{[C]} (1 - u^{l_A([C])})^{-1},$$
\noindent where $[C]$ runs through the equivalence classes of
 tailless primitive closed geodesics consisting of edges of the same type,
 and $l_A([C])$ is the algebraic
length of any geodesic in $[C]$.
\smallskip

{\bf Main Theorem.} {\it Let $\Gamma$ be a discrete cocompact
torsion-free
 subgroup of $G$ such that

(I) $\ord_{\pi} \det \G \subseteq 3\mathbb Z$,  and

(II) $\G$ is regular, namely, the {centralizer} of any non-identity
element of $\G$ in $G$ is a torus.

\noindent Then the zeta function of the $(q+1)$-regular finite
complex $X_\G = \Gamma \backslash \B$ is a rational function
\begin{eqnarray}\label{zeta}
Z(X_\G, u) = \frac{(1-u^3)^{\chi(X_\G)}}{\det(I-A_1u+qA_2u^2-q^3
u^3I)\det(I + L_Bu)},
\end{eqnarray}
in which $\chi(X_\G)$ is the Euler characteristic of $X_\G$, and $L_B$ is the Iwahori-Hecke
operator given by the $B$-double coset $Bt_2\sigma^2 B$, where
$t_2= \left( \begin{smallmatrix}  &  & \pi^{-1} \\
 &1 &  \\\pi & & \end{smallmatrix}\right)$.}

Similar to the graph zeta function, our complex zeta function can be
expressed as
$$ Z(X_\G, u) = \frac{1}{\det(I - L_E u) \det(I - (L_E)^t u^2)} =
\frac{1}{\det(I - L_E u) \det(I - L_E u^2)},$$ where $L_E$ is the
operator given by the double coset $E(t_2 \sigma^2)^2E$, which is also the
adjacency matrix of type $1$ edges in $X_\G$.

Since the opposite of the type $1$ edges are the type $2$ edges, the transpose
$(L_E)^t$ is the adjacency matrix for type $2$ edges. Likewise,
$L_B$ may be viewed as the adjacency matrix of directed chambers in $X_\G$.
Consequently, the identity (1.1) can be expressed in terms of
operators on $X_\G$ as
\begin{eqnarray}\label{zetaidentity}
\frac{(1-u^3)^{\chi(X_\G)}}{\det(I-A_1u+qA_2u^2-q^3 u^3I)} =
\frac{\det(I + L_Bu)}{\det(I - L_E u) \det(I - (L_E)^t u^2)},
\end{eqnarray}
while the parallel identity of operators on a $(q+1)$-regular graph
$X$
 reads
$$\frac{(1 - u^2)^{\chi(X)}}{\det(I - Au + qu^2I)} = \frac{1}{\det(I
- A_eu)}.$$ The similarity is reminiscent of the zeta functions
attached to a surface and a curve over a finite field. Since
(\ref{zetaidentity}) is expressed in terms of the operators on the
finite complex, it is likely to be the prototype of complex zeta
functions in general.

$Z(X_\G, u)$ clearly has properties (1) and (2). Now we discuss its
connection with the Riemann hypothesis. The trivial zeros of
$\det(I-A_1u+qA_2u^2-q^3 u^3I)$ arise from the trivial eigenvalues
of $A_1$ and $A_2$ on $X_\G$; they are $1, q^{-1}, q^{-2}$ and their
multiples by cubic roots of unity. An equivalent statement for
$X_\G$ being Ramanujan is that the nontrivial zeros of
$\det(I-A_1u+qA_2u^2-q^3 u^3I)$ all have absolute value $q^{-1}$
(cf.\cite{Li}), which is the Riemann hypothesis for $Z(X_\G, u)$.

The zeros of each determinant in (\ref{zetaidentity}) are computed in \cite{KLW},
where equivalent statements are obtained.

\begin{theorem}[\cite{KLW}, Theorem 2] The following four statements on $X_\G$ are equivalent.
\begin{enumerate}
\item[(1)] $X_\G$ is a Ramanujan complex;

\item[(2)] The nontrivial zeros of $\det(I-A_1u+qA_2u^2-q^3 u^3I)$ have absolute value $q^{-1}$;

\item[(3)] The nontrivial zeros of $\det(I + L_Bu)$ have absolute values $1$, $q^{-1/2}$ and $q^{-1/4}$; and

\item[(4)] The nontrivial zeros of $\det(I - L_E u)$ have absolute values $q^{-1}$ and $q^{-1/2}$.
\end{enumerate}
\end{theorem}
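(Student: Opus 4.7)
The plan is to decompose $L^2(\G \backslash G)$ as a discrete direct sum of irreducible admissible representations of $G$ (valid because $\G$ is cocompact and torsion-free), and to compute how each of the operators $A_1, A_2, L_E, L_B$ acts on the $K$-, $E$-, and $B$-fixed vectors of every summand. The nontrivial spectrum arises from the unramified principal series $I(\chi)$ whose Satake parameters $(\alpha_1, \alpha_2, \alpha_3)$ satisfy $\alpha_1\alpha_2\alpha_3 = 1$; by definition, $X_\G$ is Ramanujan exactly when every such nontrivial $I(\chi)$ is tempered, i.e.\ $|\alpha_1| = |\alpha_2| = |\alpha_3| = 1$. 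Each of the four statements will be rephrased as a statement about these Satake parameters.

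For $(1) \Leftrightarrow (2)$, one uses the one-dimensional space $I(\chi)^K$, on which $A_1$ and $A_2$ act as the scalars $q(\alpha_1 + \alpha_2 + \alpha_3)$ and $q^2(\alpha_1\alpha_2 + \alpha_1\alpha_3 + \alpha_2\alpha_3)$ respectively. The corresponding local factor of $\det(I - A_1 u + qA_2 u^2 - q^3 u^3 I)$ is then the Hecke polynomial $\prod_{i=1}^{3}(1 - q\alpha_i u)$, with zeros at $u = (q\alpha_i)^{-1}$; these all lie on $|u| = q^{-1}$ if and only if $|\alpha_i| = 1$, establishing the equivalence. The trivial zeros $1, q^{-1}, q^{-2}$ (together with their cube-root-of-unity rotations permitted by condition (I)) correspond to the finitely many distinguished one-dimensional subquotients coming from the trivial representation and its unramified twists, which are excluded in the notion of ``nontrivial zeros''.

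For $(1) \Leftrightarrow (3)$ and $(1) \Leftrightarrow (4)$ the parallel analysis is carried out on $I(\chi)^E$ (three-dimensional, indexed by cosets of the parabolic Weyl group stabilizing the type-$1$ edge $e_0$) and $I(\chi)^B$ (six-dimensional, indexed by the finite Weyl group $S_3$), using the Bernstein presentation of the Iwahori-Hecke algebra to express $L_E$ and $L_B$ as explicit matrices in the Satake parameters. A direct calculation should show that the reciprocal roots of $\det(I - L_E u)$ on $I(\chi)$ are the three values $q\alpha_i$, together with a bounded family of values of absolute value $q^{1/2}$ arising from Steinberg-type subquotients of reducible principal series, yielding (4); likewise, the reciprocal roots of $\det(I + L_B u)$ split into three groups whose absolute values become exactly $1$, $q^{1/2}$, and $q^{1/4}$ precisely when $|\alpha_i| = 1$, yielding (3).

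The main obstacle is the explicit diagonalization of $L_B$ on the six-dimensional space $I(\chi)^B$: since the double coset $Bt_2\sigma^2 B$ combines a translation by $t_2$ with the order-three rotation $\sigma^2$, the matrix of $L_B$ in the standard Iwahori-Hecke basis indexed by $W = S_3$ is neither triangular nor block-diagonal, so one must first decompose $I(\chi)^B$ into eigenspaces for the action of $\langle \sigma \rangle$ in order to reduce the spectral problem to tractable blocks. Once the characteristic polynomial of $L_B$ has been computed in terms of $(\alpha_1, \alpha_2, \alpha_3)$, the operator identity~(\ref{zetaidentity}) provides a strong consistency check across the three determinants appearing in (2), (3) and (4), after which all four equivalences follow by direct comparison of absolute values of reciprocal roots.
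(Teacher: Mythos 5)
The theorem you are asked to prove is cited from \cite{KLW}, and the present paper explicitly says that \cite{KLW} establishes it (and the zeta identity~(\ref{zetaidentity})) by a representation-theoretic method; your proposal --- decomposing $L^2(\G\backslash G)$ into irreducibles and tracking the action of $A_1,A_2,L_E,L_B$ on $K$-, $E$-, and $B$-fixed vectors via Satake parameters --- is therefore the right strategy and essentially the one used in that reference. The $(1)\Leftrightarrow(2)$ step via the Hecke polynomial $\prod_i(1-q\alpha_i u)$ on the one-dimensional spherical line is correct (although note that in the paper's normalization $\psi(A_2)=q(\alpha_1\alpha_2+\alpha_2\alpha_3+\alpha_3\alpha_1)$, not $q^2(\cdots)$; the slip does not affect the Hecke polynomial).

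However, the eigenvalue claims you make for $L_E$ and $L_B$ on an irreducible $I(\chi)$ are mutually inconsistent, and this is where your proposal has a real gap. If the three eigenvalues of $L_E$ on $I(\chi)^E$ were exactly $q\alpha_1,q\alpha_2,q\alpha_3$, as you assert, then the zeta identity restricted to $I(\chi)$ reads
$\prod_i(1-q\alpha_i u)\cdot\det(I+L_Bu)|_{I(\chi)^B}=\prod_i(1-q\alpha_iu)(1-q\alpha_iu^2)$,
forcing the six eigenvalues of $L_B$ on $I(\chi)^B$ to be $\pm\sqrt{q\alpha_i}$, which in the tempered case all have absolute value $q^{1/2}$ --- not the three values $1,q^{1/2},q^{1/4}$ you predict. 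Conversely, if the $L_B$ spectrum on a single tempered $I(\chi)$ really did split into groups of absolute values $1,q^{1/2},q^{1/4}$, the leading coefficients would force $|\mu_1\mu_2\mu_3|\ne q^3$, contradicting the $L_E$ claim. So at least one of the two asserted spectra is wrong as stated, and you cannot simply appeal to ``Steinberg-type subquotients'' to repair this, since those subquotients have no $K$-fixed vector and do not contribute to the left-hand side of the identity on the same summand. What the actual computation (which you correctly flag as the main obstacle and do not carry out) must reveal is that the absolute values $q^{-1/2}$ among the zeros of $\det(I-L_Eu)$, and $1$ and $q^{-1/4}$ among those of $\det(I+L_Bu)$, come from the non-spherical discrete constituents of $L^2(\G\backslash G)$ possessing $E$- or $B$-fixed but not $K$-fixed vectors, while irreducible tempered $I(\chi)$'s contribute only $q^{-1}$ to the former and only $q^{-1/2}$ to the latter. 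Until the $L_E$- and $L_B$-matrices on $I(\chi)^E$ and $I(\chi)^B$ are actually diagonalized (and the finite list of special representations is treated separately), the equivalences $(1)\Leftrightarrow(3)$ and $(1)\Leftrightarrow(4)$ remain unproved.
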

\noindent Thus the Riemann hypothesis for $Z(X_\G, u)$ is actually a statement concerning the nontrivial zeros of each determinant in (\ref{zetaidentity}),
analogous to the Riemann hypothesis for a surface zeta function.
A representation-theoretical proof of (\ref{zetaidentity}) is given in \cite{KLW}. It should be pointed out that the right hand side of
(\ref{zetaidentity}) is equal to $Z(X_\G, u)/Z_2(X_\G, -u)$,  where $Z_2(X_\G, u)$ is
the zeta function of tailless type $1$ closed galleries in $X_\G$.
As shown in \S10, this quotient also affords another
interpretation as the product of a geometric and an algebraic zeta functions: $Z_1(X_\G, u) Z_-(\G, u)$,
where $Z_1(X_\G, u)= 1/\det(I - L_Eu)$ involves type $1$ geodesic cycles in $X_\G$,
and $ Z_-(\G, u)$ involves conjugacy classes in $\G$ of negative type.
This interpretation gives an infinite product expression of the left hand side of (\ref{zetaidentity})
(cf. Theorem \ref{newzetaidentity}).

This paper is organized as follows. In \S2 the types and lengths of elements in $G$
and geodesics in $\B$ are introduced. Properties of elements in $\G$ and basic concepts
of cycles in the finite complex $X_\G$ are discussed in \S3, while recursive relations of Hecke
operators on $X_\G$ are
laid out in \S4. The  vertex-based homotopy classes of closed geodesics in
$X_\G$ are partitioned into sets indexed by the conjugacy classes
$[\g]$ of $\G$, with each set consisting of  vertex-based homotopy classes
which are base-point free homotopic to the path from $K$ to $\g K$.
 Each set $[\g]$ has a type, algebraic length and
geometric length, defined in terms of those of the rational form
of $\g$, which depends on $\g$ up to conjugacy.
 Theorem \ref{charlAandlG} says that the
lengths of the set $[\g]$ are the minimal respective lengths of the
homotopy cycles contained in the set. Cycles achieving minimal
geometric
 (resp. algebraic) length in each $[\g]$ are called {\it tailless} (resp. {\it
algebraically tailless}). In other words, among the cycles
base-point free homotopic to each other, the shortest ones are
called tailless. This definition also applies to graphs. Algebraically tailless cycles afford an explicit algebraic
characterization, as shown in \S5 and \S6 according as
 $\g$ is split or rank-one split, and hence are more amenable to
computation. We shall see in \S5 and \S6 that, for type $1$ and type
$2$ cycles, there is no distinction between algebraic tailless and
tailless (Corollaries  \ref{typeoftailless} and \ref{rankonetailless}).

While the zeta function only concerns tailless cycles of types $1$
and $2$, to find its closed form, we have to consider all cycles up
to homotopy. Indeed, we shall compute the number of cycles, as well
as those of type $1$, in a set $[\g]$ with given algebraic length.
This is carried out in \S5 and \S6. As shown in \S9, where the Main
Theorem is proved, these numbers can be put together to show that
the logarithmic derivative of the left hand side of
(\ref{zetaidentity}) counts the number of type $1$ tailless closed
geodesics in $X_\G$, namely, those from the logarithmic derivative
of $1/\det(I - L_E u)$, and some extra terms arising from sets
represented by rank-one split $\g$'s.

\S7 and \S8 are devoted to explaining these extra terms. In \S7
we discuss type $1$ tailless closed galleries and define chamber
zeta function $Z_2(X_\G, u)$, while the zeta function on type $1$
tailless closed geodesics, $Z_1(X_\G, u)$, is discussed in \S8. The
boundary of a type $1$ tailless closed gallery is analyzed in \S8.2,
where it is shown that the boundary of an even/odd length gallery
consists of two/one tailless type $1$ cycle(s). The information on
the boundary further leads to a criterion on the chambers occurring
in a type $1$ tailless closed gallery. This in turn allows us to
compute the logarithmic derivative of $\frac{\det(I + L_Bu)}{\det(I
- (L_E)^t u^2)} = \frac{Z_1(X_\G, u^2)}{Z_2(X_\G, -u)}$, which gives
the extra terms.

In \S10 the Ihara (group) zeta function $Z(\G, u)$ attached to $\G$ is introduced,
analogous to the original definition in \cite{Ih} for the case of $\PGL_2(F)$, as
an infinite product over primitive conjugacy classes in $\G$. By separating these
conjugacy classes into positive and negative types, we show that the product
over those with negative type, denoted by $Z_-(\G, u)$, accounts for the
extra terms alluded above, and thus provides a different interpretation of (\ref{zetaidentity}).
Finally we remark that for
$\PGL_2$, Ihara group zeta function coincides with the graph zeta function attached
to the quotient of the tree by the group, but this is no longer true for $\PGL_3$.

\section{Edges and Geodesics in $\B$}

\subsection{Hecke operators}
The group $G$ is the disjoint union of the $K$-double cosets $$
T_{n,m}=K~\diag(1, \pi^m, \pi^{m+n})K$$ as $m, n$ run through all
non-negative integers. We shall also regard $T_{n,m}$ as the Hecke
operator acting on functions $f \in L^2(G/K)$ via
$$T_{n,m}f(gK) = \sum_{\alpha K \in T_{n,m}/K} f (g\alpha K).$$
In particular,
$$ A_1 = T_{1,0} \qquad {\rm and} \qquad A_2 = T_{0,1}.$$

\subsection{Description of type 1 and type 2 edges} The vertices of $\B$ are parametrized by $G/K$.
Given a vertex $gK$, represent $g \in G$ by an element $\tilde g \in \GL_3(F)$. Denote by $L$ the rank three $\oo$-lattice generated by the three column vectors of $\tilde g$. The equivalence class of $L$ depends only on $gK$, and will be identified with $gK$. 
 Two vertices $gK$ and $g'K$ are adjacent if they can be represented by lattices $L$ and $L'$, respectively, such that $\pi L  \subset L'  \subset L$. Three mutually adjacent vertices form a $2$-dimensional simplex, called a  chamber. This structure makes the building $\B$ a simply connected $2$-dimensional simplicial complex.

A vertex $gK$ has a type $\tau(gK)$ defined by $\ord_{\pi} \det g \mod 3$.
Adjacent vertices do not have the same type.
The type of a directed
edge $gK \rightarrow g'K$ is $\tau(g'K) - \tau(gK) = i$, which is
$1$ or $2$. Out of each vertex there are $q^2+q+1$ edges of a given
type. The type $1$ edges out of $gK$ have terminal vertices $g\alpha
K$, where $\alpha K$ are the $K$-cosets contained in the double
coset
$$ A_1 = T_{1,0} = K \left(\begin{matrix} 1 &  &  \\ & 1&  \\
& & \pi \end{matrix}\right)K
= \bigcup_{a, b \in \oo / \pi \oo}\left(\begin{matrix} \pi & a &  b \\ & 1& \\
& & 1\end{matrix}\right)K \bigcup_{c \in \oo / \pi \oo}
 \left(\begin{matrix} 1 &  &  \\ & \pi& c \\
& & 1\end{matrix}\right)K \bigcup  \left(\begin{matrix} 1 &  &  \\ & 1&  \\
& & \pi \end{matrix}\right)K.
$$

\noindent Similarly, the terminal vertices of type 2 edges out of
$gK$ can be described using the following $q^2+q+1$ left K-coset
representatives of $A_2=T_{0,1}$:
$$\left(\begin{matrix} \pi &  &  b \\ & \pi & c\\
& & 1\end{matrix}\right), \left(\begin{matrix} \pi & a &  \\ & 1 &  \\
& & \pi \end{matrix}\right) ~{\rm and} ~\left(\begin{matrix} 1 &  &  \\ & \pi &  \\
& & \pi \end{matrix}\right), ~{\rm where} ~a, b, c \in \oo /\pi
\oo.$$

\subsection{Geodesics and lengths in $\B$}

Since $\B$ is simply connected, all paths between two vertices are
homotopic. By a geodesic between two vertices of $\B$ we mean a path
with shortest length in the 1-skeleton of the building
$\B$, which
is the (undirected) graph with vertex set $G/K$ and adjacency operator
$A_1 + A_2$.

It can be shown that all geodesics between two vertices $g_1K$ and
$g_2K$ with $g_1^{-1}g_2 \in T_{n,m}$ lie in the same apartment, and
they use $n$ type $1$ edges and $m$ type $2$ edges. We say that they
have type $(n, m)$. When $m = 0$ (resp. $n = 0$), the path is called
{\it type $1$} (resp. {\it type $2$}) for short. Given
a geodesic from $g_1K$ to $g_2K$, define $l_G(g_1\m g_2):=n+m$ to be
its {\it geometric
length} and $l_A(g_1\m g_2) := n+2m$ its {\it algebraic length}.
 Note that the same path
traveled backwards has algebraic length $m + 2n$. Further, when the
path has type $1$ or $2$, there is only one geodesic between the two
vertices.

\section{Finite quotients of $\B$}

\subsection{The group $\G$}

Let $\Gamma$ be a discrete cocompact torsion-free subgroup of $G$
which acts on $\B$ by left translations. Then $\G$ intersects any
compact subgroup of $G$ trivially. In particular, $\G$ acts on $\B$
free of fixed points. Denote by $X_{\Gamma} = \Gamma\backslash \B$
the finite quotient, whose vertices are the double cosets $\Gamma
\backslash G /K$. See \cite[\S3]{Sa} for some examples of such $\G$.

\subsection{Homotopy classes of closed paths in $X_\G$}\label{parametrization}

The 1-skeleton of $X_\G$ is an undirected graph with the adjacency
operator $A_1 + A_2$. We study cycles on this graph which are
homotopic in $X_\G$. Recall that all cycles
we consider are contained in the $1$-skeleton of $X_\G$.

 A closed geodesic in $X_\G$ starting at the vertex $\G g K$ can be
lifted to a geodesic path in $\B$ starting at $gK$ and ending at $\g gK$ for
some $\g \in \G$. Two such geodesic cycles in $X_\G$ are homotopic
in $X_\G$ if and only if their liftings in $\B$ to two geodesic paths starting
at $gK$ have the same ending vertex. Denote by $\kappa_\g(gK)$ the homotopy class of
the geodesic paths from $gK$ to $\g gK$ in $\B$. When
projected to $X_\G$, these geodesic paths become homotopic geodesic cycles which have
 shortest geometric length
among all cycles in its homotopy class in $X_\G$. By abuse of
notation, we also use $\kappa_\g(gK)$ to denote the homotopy class of its projection
in $X_\G$. Thus the fundamental group of $X_\G$ based at $\G g K$ is
$$\pi_1(X_\G, \G g K) = \{\kappa_\g(gK) : \g \in \G \}.$$

\noindent Since $\G$ has no fixed points, all $\kappa_\g(gK)$ are
distinct and $\pi_1(X_\G, \G g K)$ is isomorphic to $\G$.

When all base points are taken into account, the set of all vertex-based
homotopy classes of all closed geodesics in $X_\G$, that is, $\coprod_{\G gK \in \G \backslash G/K}
\pi_1(X_\G, \Gamma gK)$, can be expressed as $\G \times \G \backslash G/K$.

 For each conjugacy class of $\G$ fix a
representative $\g$ and denote that class by $\langle \g \rangle_{\G}$. Let $[\G]
= \{ \g \}$ be the set of representatives of conjugacy classes.
Since the conjugacy class of $\g$ in $\G$ corresponds bijectively to
$\G$ modulo the centralizer $C_\G(\g)$ of $\g$ in $\G$, so $\G
= \coprod_{\g \in [\G]} \langle \g \rangle_\G$ corresponds bijectively to $\coprod_{\g \in [\G]}
C_\G(\g)\backslash \G$, and consequently $\Gamma \times  \G \backslash G/K$ corresponds bijectively to
$$ \coprod_{\g \in [\G]} (C_\Gamma(\g)\backslash \Gamma) \times ( \Gamma\backslash G/K )=
  \coprod_{\g \in [\G]}  C_\Gamma(\g)\backslash G/K.$$
Letting, for each $\g \in [\G]$,
\begin{eqnarray}\label{classofgamma}
[\g] = \{\kappa_\g (gK) ~|~g \in C_\Gamma(\g)\backslash G/K \},
\end{eqnarray} we can express the set of all vertex-based homotopy classes of $X_\G$
as the disjoint union of $[\g]$ over $\g \in [\G]$.

Two vertex-based homotopy classes of $X_\G$ are said to be {\it base-point
free homotopic} if a cycle in one class is obtained from a cycle in
the other class by repeated applications the following procedures:

(H1) Shifting the starting vertex to another vertex on the cycle;

(H2) Replacing the cycle by a homotopic cycle while holding the end
points fixed.

\noindent A standard topological argument proves the following geometric interpretation of the set $[\g]$.

\begin{proposition}\label{basepointfreehomotopy}
Let $\g \in [\G]$. The set $[\g]$ defined by (\ref{classofgamma})
has the following properties:

(i) It is closed under base-point free homotopy;

(ii) Any two classes in $[\g]$ are base-point free homotopic;

(iii) The set $[\g]$ is independent of the choice of representative $\g$ in the conjugacy class $\langle \g \rangle_\G$.

\noindent Consequently, $[\g]$ consists of all vertex-based homotopy
classes which are base-point free homotopic to $\kappa_\g(K)$.
\end{proposition}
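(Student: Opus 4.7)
The plan is to verify properties (i), (ii), (iii) in turn, after which the final consequence is immediate. Properties (i) and (iii) I expect to handle by direct lifting arguments that exploit the $\G$-equivariance of the projection $\B \to X_\G$, whereas (ii) will require using that $\B$ is simply connected.

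For (i), I would first note that base-point free homotopy is generated by (H1) and (H2), and that (H2) trivially preserves the based homotopy class, so only (H1) needs attention. The key step is: if a cycle $\ka_\g(gK) \in [\g]$ passes through a vertex $\G hK$ of $X_\G$, lift it so that $hK$ lies on the geodesic from $gK$ to $\g gK$ in $\B$; the rebased cycle then lifts to the arc from $hK$ to $\g gK$ followed by the $\g$-image of the arc from $gK$ to $hK$, and ends at $\g hK$. Thus the new class is $\ka_\g(hK)$, which lies in $[\g]$. For (iii), given $\delta \in \G$, I would apply $\delta$ to a geodesic from $\delta^{-1}gK$ to $\g\delta^{-1}gK$ to obtain a geodesic from $gK$ to $(\delta\g\delta^{-1})gK$ with the identical projection to $X_\G$; this gives $\ka_{\delta\g\delta^{-1}}(gK) = \ka_\g(\delta^{-1}gK)$, and the two sets then coincide as $g$ varies.

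The main step is (ii). My plan is, given $g_1, g_2 \in G$, to choose any path $p$ from $g_1K$ to $g_2K$ in $\B$ and form the concatenation of $p$, the geodesic from $g_2K$ to $\g g_2K$, and $\g p$ traversed from $\g g_2K$ to $\g g_1K$. Because $\B$ is simply connected, this path from $g_1K$ to $\g g_1K$ is homotopic rel endpoints to the geodesic from $g_1K$ to $\g g_1K$. Projecting to $X_\G$, where $p$ and $\g p$ share a common image $\bar p$, yields a based homotopy between $\ka_\g(g_1K)$ and $\bar p \cdot \ka_\g(g_2K) \cdot \bar p^{-1}$. A single application of (H1) to shift the base point along $\bar p$ then reduces the latter to $\ka_\g(g_2K)$, establishing base-point free homotopy of the two classes.

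The concluding statement then falls out: since $\ka_\g(K) \in [\g]$, property (i) shows every class base-point free homotopic to $\ka_\g(K)$ lies in $[\g]$, while (ii) gives the reverse inclusion. The subtlest point I anticipate is the tacit well-definedness of the indexing $[\g] = \{\ka_\g(gK) : g \in C_\G(\g)\backslash G/K\}$: one must check that $\ka_\g(cgK) = \ka_\g(gK)$ for $c \in C_\G(\g)$, which will follow because applying $c$ sends the geodesic from $gK$ to $\g gK$ to the geodesic from $cgK$ to $c\g gK = \g(cg)K$ with the same image in $X_\G$. This compatibility is used implicitly throughout and, in particular, makes the output $\ka_\g(hK)$ of step (i) well-defined as an element of the set indexed by $C_\G(\g)\backslash G/K$.
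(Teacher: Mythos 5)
Your proof is correct and follows essentially the same approach as the paper: (i) by shifting the base point along the lifted geodesic, (ii) by conjugating $\kappa_\g(g_1K)$ by a connecting path $p$ (using simple connectivity of $\B$) and then applying (H1), and (iii) by the observation that left translation by $\delta\in\G$ identifies the lift of $\kappa_{\delta\g\delta^{-1}}(gK)$ with that of $\kappa_\g(\delta^{-1}gK)$. Your added remark on the well-definedness of the indexing by $C_\G(\g)\backslash G/K$ is a sound point that the paper leaves implicit.
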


\subsection{Classification of elements in $\G$}
Given $\g \in \Gamma$, the eigenvalues of $\g$ are defined to be those of any
$\tilde \g \in \GL_3(F)$ representing $\g$. So they are well-defined up to common
multiples in $F^\times$. In the next subsection an appropriate representative of
eigenvalues of $\g$ will be chosen to facilitate future discussions. The field $F\langle \g \rangle$
generated by the eigenvalues of $\g$ over $F$ is well-defined.
An element in $ \G$ having three distinct
eigenvalues in $F$ is called {\it split}~; it is called {\it
rank-one split} if it has { distinct eigenvalues but only one of them lies in $F$}. In the
latter case we say it is {\it unramified}/{\it ramified} rank-one
split if its eigenvalues generate an unramified/ramified quadratic
extension of $F$. Note that $\G$ does not contain elements with no
eigenvalue in $F$. Indeed, if $\g$ is such an element, then the
characteristic polynomial of any lifting $\tilde \g$ of $\g$ in $\GL_3(F)$ is
irreducible over $F$. As $\ord_{\pi}
(\det \tilde \g) = 3m$ for some integer $m$, the eigenvalues of $\tilde \g' =
\pi^{-m}\tilde \g$ are units in a cubic extension of $F$, which implies
that $\g$ lies in the intersection of $\G$ with a conjugate of $K$,
and hence is the identity element. Together with the fact that
every element in a discrete cocompact lattice is semisimple (see \cite{Ra} Thm.1.12),
we arrive at
\begin{theorem}[Classification of elements in $\Gamma$] $ $ \\
Every element $\g$ of $\Gamma$ falls in one of the following types:\\
1) $\g$ is the identity; \\
2) $\g$ is split, that is, it has three distinct eigenvalues in $F^{\times}$;
 \\
3) $\g$ is ramified/unramified rank-one split, that is, {$\g$ has three distinct eigenvalues and} $F\langle \g \rangle$ is a ramfield/unramified quadratic extension of $F$; \\
4) $\g$ is irregular, that is, it has a repeated eigenvalue with multiplicity two.
\end{theorem}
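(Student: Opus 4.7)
The plan is to derive the classification by combining Proposition \ref{diagonalizability} with the lattice-stability argument already carried out in the paragraph immediately preceding the theorem. Essentially, the classification is a bookkeeping consequence of these two inputs once elements of $\G$ are organized by the factorization type of their characteristic polynomial.

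First I would fix $\g \in \G$ and lift it to $\GL_3(F)$. By Proposition \ref{diagonalizability}, $\g$ is diagonalizable over some finite extension of $F$, and its eigenvalues $\lambda_1,\lambda_2,\lambda_3$ are algebraic over $F$ of degree at most $3$, since they are roots of a cubic characteristic polynomial with coefficients in $F$. The ratios $\lambda_i/\lambda_j$ are independent of the choice of lift, so the field $F\langle\g\rangle$ generated by those ratios is a well-defined invariant of the class of $\g$ in $\PGL_3(F)$, with $[F\langle\g\rangle:F] \in \{1,2,3\}$.

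Next I would eliminate the case $[F\langle\g\rangle:F]=3$. This is exactly what is done in the sentences preceding the theorem: if the characteristic polynomial of $\g$ were irreducible over $F$, then after scaling by a suitable power of $\pi$ (permissible since passage to $\PGL_3$ lets us adjust by the center) the eigenvalues of the rescaled element become units in a cubic extension of $F$, so the rescaled element lies in a $G$-conjugate of $K$. As $\G$ is torsion-free and acts freely on $\B$, this forces $\g$ to be the identity, contradicting $[F\langle\g\rangle:F]=3$.

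It remains to analyze $[F\langle\g\rangle:F] \in \{1,2\}$. If the degree is $2$, then one eigenvalue lies in $F$ and the remaining two are Galois-conjugate and generate the quadratic field $F\langle\g\rangle$, giving type (3) with the unramified/ramified dichotomy coming from the nature of this extension. If the degree is $1$, all three eigenvalues lie in $F^\times$, and I subdivide by multiplicity: three distinct eigenvalues give type (2); a single repetition of multiplicity two gives type (4); and three equal eigenvalues mean $\g$ is a scalar matrix, which is the identity in $\PGL_3$, giving type (1). The apparent obstacle---handling non-diagonalizable and cubic-extension elements---has already been dispatched, the former in the proof of Proposition \ref{diagonalizability} by shrinking a unipotent or reducible conjugate into $K$ via diagonal conjugation $w_n$, and the latter by the unit-in-cubic-extension argument just cited. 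Thus the theorem is an organized summary of cases rather than a new computation.
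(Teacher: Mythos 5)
Your proposal is correct and follows essentially the same route as the paper. The paper itself treats the classification as an immediate consequence of Proposition \ref{diagonalizability} together with the preceding paragraph that rules out irreducible cubic characteristic polynomials via the ``units in a cubic extension lie in a conjugate of $K$'' argument; you have simply reorganized that reasoning into an explicit case split on $[F\langle\g\rangle:F]\in\{1,2,3\}$, which is a clean and faithful way to spell out the same bookkeeping.
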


The following conclusion on $\G$ shown in \cite{KLW} results from the closed form expression of the zeta function identity of $X_\Gamma$.

\begin{proposition} [\cite{KLW}, Corollary 4]
$\Gamma$ contains rank-one split elements.
\end{proposition}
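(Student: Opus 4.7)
The plan is to argue by contradiction. Suppose $\Gamma$ contains no rank-one split element. Combining the Classification Theorem above with the standing regularity hypothesis on $\Gamma$ and the remark (just before the Classification Theorem) that $\Gamma$ cannot contain elements whose characteristic polynomial is irreducible of degree three over $F$, every non-identity $\gamma \in \Gamma$ must then be split.

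I would next invoke the material promised in \S10: $Z_-(\Gamma, u)$ is an Euler product over the primitive conjugacy classes in $\Gamma$ of negative type, and as announced in the introduction these are precisely the rank-one split classes, which produce the ``extra terms'' in the main zeta identity. Under the present hypothesis this product is empty, so $Z_-(\Gamma, u) \equiv 1$. Feeding this into the factorization
$$Z_1(X_\Gamma, u)\,Z_-(\Gamma, u) \;=\; \frac{\det(I + L_B u)}{\det(I - L_E u)\,\det(I - L_E u^{2})}$$
(also from \S10), together with $Z_1(X_\Gamma, u) = 1/\det(I - L_E u)$, one is forced into the polynomial identity
$$\det(I + L_B u) \;=\; \det(I - L_E u^{2}).$$

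To close the argument I would exploit a dimension/degree count. The operator $L_E$ acts on the space of type $1$ directed edges of $X_\Gamma$, of dimension $e$ equal to the number of undirected edges, so the right-hand side above is a polynomial in $u^{2}$ of $u$-degree at most $2e$. The operator $L_B$ acts on the $3c$-dimensional space of directed chambers of $X_\Gamma$, and the standard incidence count in $\B$ (each chamber has three edges; each edge lies in $q+1$ chambers) gives $3c = (q+1)e > 2e$ whenever $q \ge 2$. The main obstacle, and the really delicate point, is to convert this dimensional excess into an outright contradiction: the forced equality compels the nonzero spectrum of $L_B$ to occur in pairs $\pm\sqrt{\mu}$ matching the nonzero eigenvalues $\mu$ of $L_E$, so one must produce an independent spectral or combinatorial invariant of $X_\Gamma$ --- for instance comparing $\Tr(L_B^{2})$ with $2\Tr(L_E)$ through a direct closed-walk count on the chamber and type $1$ edge graphs --- that is incompatible with this rigid $\pm$-symmetry, thereby ruling out the assumption that $\Gamma$ contains no rank-one split element.
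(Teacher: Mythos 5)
Your strategy — assume no rank-one split elements and derive a contradiction from the zeta identity — is the right one, and the paper indeed says this Proposition ``results from the closed form expression of the zeta function identity'' (the paper itself defers the proof to \cite{KLW}). Your reduction is also sound: regularity plus the observation in \S3.3 (an element with irreducible cubic characteristic polynomial lies in a conjugate of $K$, hence is torsion) forces every non-identity $\gamma$ to be split, whence $Z_-(\Gamma,u) \equiv 1$ and $\det(I + L_B u) = \det(I - L_E u^2)$. The problem is that you stop here and, as you yourself acknowledge, do not actually close the argument: the dimension count $3c > 2e$ bounds only the sizes of the matrices, not the degrees of the determinant polynomials (which equal the numbers of \emph{nonzero} eigenvalues), and the trace comparison you gesture at ($\Tr L_B^2$ vs. $2\Tr L_E$) gives nothing, since both are zero — $\Tr L_B^2 = 0$ because type $1$ tailless closed galleries have length divisible by $3$ (Proposition \ref{typeonetaillessgallery}), and $\Tr L_E = 0$ because assumption (I) rules out type-$1$ cycles of algebraic length $1$.

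The cleaner way to finish is to feed $\det(I + L_B u) = \det(I - L_E u^2)$ into the \emph{full} Main Theorem identity (\ref{zeta}), which then collapses to
$(1-u^3)^{\chi(X_\G)}\det(I - L_E u) = \det(I - A_1 u + q A_2 u^2 - q^3 u^3 I)$.
Here the degrees are honest: the right-hand side has degree exactly $3V$ with leading coefficient $(-q^3)^V$, the factor $(1-u^3)^{\chi(X_\G)}$ has degree exactly $3\chi(X_\G) = (q+1)(q-1)^2 V$, and $\det(I - L_E u)$ has degree at least $1$ because the constant function is an $L_E$-eigenfunction with eigenvalue $q^2 \neq 0$. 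Since $(q+1)(q-1)^2 \ge 3$ for all $q \ge 2$, the left side has degree $\ge 3V + 1 > 3V$ (and even for $q = 2$, where the degrees match, the leading coefficients $(-1)^V$ and $(-8)^V$ disagree). This is the contradiction. So the idea is right, but the step you flag as the ``really delicate point'' is in fact a genuine gap in the proposal as written; the identity you chose to work with does not by itself yield the contradiction, and you need to return to the full zeta identity to get exact degree information.
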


\subsection{Rational form}\label{rationalform}
Let $\g$ be a non-identity element in $\G$. If $F\langle \g \rangle=F$, we may
assume that the eigenvalues are $1, a, b \in F^\times$ with
$\ord_{\pi}$$ b \ge \ord_{\pi}$$ a \ge 0$. Then $\g$ is conjugate to
$r_\g := \diag(1, a, b)$. If $\g$ is rank-one split, then its
characteristic polynomial has the form $(x - a)(x^2 - b'x - c')$
with $x^2 - b'x - c'$ irreducible over $F$. The splitting field of
$x^2 - b'x - c'$ is a quadratic extension $L= F(\lambda)$ of $F$. We
fix the choice of $\lambda$ so that it is a unit if $L$ is
unramified over $F$ and it is a uniformizing element if $L$ is
ramified over $F$. Let $x^2 - bx - c$ be the irreducible polynomial
of $\lambda$ over $F$ and let $\bar {\lambda}$ be the Galois
conjugate of $\lambda$. Then $\ord_{\pi}$$ c = 0$ or $1$ according as
$L$ is unramified or ramified over $F$ and $\ord_{\pi}$$ b \ge \frac
{1}{2}\ord_{\pi}$$ c$. There are elements $e, d \in F$ such that $e +
d \lambda$ and $e + d \bar {\lambda}$ are the roots of $x^2 - b'x -
c'$ in $L$. Consequently $\g$ is conjugate to
$r_\g :=\left( \begin{matrix} a &  &  \\ & e & dc \\ & d& e+db \end{matrix}\right)$.
 We shall assume that all eigenvalues
of $r_\g$ are minimally integral. In other words, $a, e, d$ are
in $\oo$ and at least one of them is a unit.
Call $r_\g$ the {\it rational form of} $\g$. Clearly it depends on the conjugacy class of $\g$.

We study centralizers of $\g \in \G$.
\begin{proposition}\label{centralizer} Let $\g \in \G$ be a non-identity element.
\begin{itemize}
\item[(1)] If $\g$ is rank-one split, then its centralizer $C_G(\g) \cong F\langle \g \rangle^{\times}$ is a non-split torus, and $C_\G(\g)$ is a free abelian group of rank one.

\item[(2)] If $\g$ is split, then its centralizer $C_G(\g) \cong (F^{\times})^2$ is a split torus, and $C_\G(\g)$ is a free abelian group of rank two.

\item[(3)] If $\g$ is irregular, then its centralizer $C_G(\g) \cong \GL_2(F)$ is not a torus, and $C_\G(\g)$ is isomorphic to a discrete co-compact torsion-free subgroup of $\GL_2(F)$.
\end{itemize}
\end{proposition}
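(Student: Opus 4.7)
The plan is to compute $C_G(\g)$ from the rational form $r_\g$ by an explicit block-matrix argument in $\GL_3(F)$ and then descend modulo the central $F^\times$ to obtain the centralizer in $G=\PGL_3(F)$; the structure of $C_\G(\g)=\G\cap C_G(\g)$ then follows from discreteness, cocompactness, and torsion-freeness of $\G$.

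For $C_G(\g)$ I proceed case by case. In the split case (2), $r_\g=\diag(1,a,b)$ has three distinct entries, so its $\GL_3(F)$-centralizer is the diagonal torus $(F^\times)^3$; descending to $\PGL_3(F)$ gives $(F^\times)^2$, a split torus of rank two. In the rank-one split case (1), $r_\g$ has the block form $\diag(a,M)$ where $M$ is cyclic with irreducible characteristic polynomial over $F$ and $F[M]\cong L:=F\langle\g\rangle$; the centralizer of $M$ in $\GL_2(F)$ is $F[M]^\times\cong L^\times$, so $C_{\GL_3(F)}(r_\g)\cong F^\times\times L^\times$. The center of $\GL_3(F)$ embeds diagonally as $\{(c,c):c\in F^\times\}$ under $F\hookrightarrow L$, and the map $(b,c)\mapsto c/b$ identifies $C_G(\g)\cong L^\times$, a non-split rank-one torus over $F$. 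In the irregular case (3), a Galois-orbit count on the characteristic polynomial forces all eigenvalues of $\g$ into $F$, so after diagonalization $\g$ is $G$-conjugate to $\diag(b,b,a)$ with $a\ne b$ in $F^\times$; the block centralizer in $\GL_3(F)$ is $\GL_2(F)\times F^\times$, and the same descent gives $C_G(\g)\cong\GL_2(F)$, which is non-abelian and hence not a torus.

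For $C_\G(\g)$, the key input is that $\g$ is semisimple (by the earlier diagonalizability proposition), so its $G$-conjugacy class is closed in $G$. This lets me identify $C_\G(\g)\backslash C_G(\g)$ with a closed subspace of the compact quotient $\G\backslash G$ via the orbit map $c\mapsto\G c$, which yields cocompactness of $C_\G(\g)$ in $C_G(\g)$. In cases (1) and (2), the torus $C_G(\g)$ contains a profinite compact open subgroup $K_T$ (namely $\oo_L^\times$ or $(\oo^\times)^2$) whose quotient is $\Z$ or $\Z^2$; the intersection $C_\G(\g)\cap K_T$ is discrete in a compact group, hence finite, and torsion-freeness forces it to be trivial. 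Therefore $C_\G(\g)$ injects into this free abelian quotient, and cocompactness places its image as a finite-index subgroup, giving the claimed free abelian group of rank one or two. In case (3) the conclusion is exactly that $C_\G(\g)$ is a discrete cocompact torsion-free subgroup of $\GL_2(F)$, with no further simplification available.

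The main obstacle I anticipate is the cocompactness of $C_\G(\g)$ in $C_G(\g)$, which rests on the closedness of semisimple conjugacy classes in the reductive $p$-adic group $G$; this can be cited as a standard fact, or established geometrically by letting $C_G(\g)$ act on its axis or flat in $\B$ (in cases (1) and (2)) or on a sub-building (in case (3)) and using compactness of $\G\backslash\B$ to control the quotient. The remaining steps are routine matrix calculations and standard structure theory of tori over a nonarchimedean local field.
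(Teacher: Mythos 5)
Your computation of $C_G(\g)$ via the rational form matches the paper's approach closely in all three cases, and your block-matrix/descent arguments are correct. Where your proof genuinely diverges is in the treatment of $C_\G(\g)$: you prove cocompactness of $C_\G(\g)$ in $C_G(\g)$ as a unified intermediate step (invoking closedness of the semisimple conjugacy class, or the geometric alternative via the action on the building) and then read off the rank from the standard short exact sequence $1\to K_T\to C_G(\g)\to \Z^r\to 1$, whereas the paper never establishes cocompactness at all. Instead the paper argues case by case in a more elementary, hands-on fashion: in the rank-one case it shows the rank cannot exceed one because a rank-two free abelian subgroup of $L^\times\cong\Z\times\oo_L^\times$ would have to contain a nontrivial unit, which would give a torsion element of $\G$; in the split case it bounds the rank above by two by injecting $C_\G(\g)$ into $\Z\times\Z$ and bounds it below by two by observing that a smaller rank would make the double coset space $C_\G(\g)\backslash C_G(\g)K/K$ infinite, contradicting finiteness of $X_\G$. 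Your route is cleaner conceptually and treats both torus cases uniformly, but it leans on a genuinely nontrivial input (the lattice-in-centralizer theorem, or equivalently closedness of semisimple orbits in $\PGL_3(F)$), which the paper deliberately sidesteps; the paper's argument is longer but entirely self-contained, requiring only the explicit matrix forms and the finiteness of the quotient complex.
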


\begin{proof} (1) Assume $\g \in \Gamma$ is rank-one split. There is an element $h \in G$ such that
 $h\m \g h=  r_\g  = \left(
\begin{matrix} a &  &  \\ & e & dc \\ & d& e+db
\end{matrix}\right).$ Up to scalars we may express
$$h\m C_G(\g) h= \left\{
\begin{pmatrix} 1 &  &  \\ & x & cy \\ & y& x+by \end{pmatrix} ~|~  x, y \in F, \text{~not ~both} ~0 \right\}.$$
The map $\phi$ sending $g = \begin{pmatrix} 1 &  &  \\ & x & cy \\ & y& x+by \end{pmatrix}$ to $ x + \lambda y$ yields an isomorphism from $h\m C_G(\g) h$ to $F(\gamma)^\times$ such that the norm of $\phi(g)$ is equal to $\det g$.
As a torsion-free discrete subgroup of $C_G(\g)$, $C_\G(\g)$ is a free abelian group.
If $C_\G(\g)$ has rank greater than one, then $\phi(h\m C_\G(\g) h)$ contains a nontrivial unit $u = x+ \lambda y$. Thus $x, y \in \oo$ and the norm of $u$ is in $\oo^\times$. This means that $g = \phi^{-1}(u)$ has integral entries and $\det g$ is a unit in $\oo$. In other words, $g$ is a non-identity element in $K$. Hence $h g h\m$ is a non-identity element in $\G \cap hKh\m$, and thus has finite order, contradicting the torsion-free assumption of $\G$.
Therefore $C_\G(\g)$ is a free abelian group of rank one.

(2) When $\g$ is split, we have $h\m \g h=  r_\g  = \diag(1, a, b)$ for some $h \in G$, and $h\m C_G(\g) h$ can be expressed as $\{\diag(1, x, y) ~|~ x, y \in F^\times \}$, which is isomorphic to $F^\times \times F^\times$ under the map $\diag(1, x, y) \mapsto (x, y)$. Since $\G$ intersects any compact subgroup of $G$ trivially,
$ C_\G(\g)$ can be identified as a subgroup of $C_G(\g)/(C_G(\g) \cap K) \simeq (F^\times /\oo^\times) \times (F^\times /\oo^\times) \simeq \Z \times \Z$, thus it has rank at most $2$.
If $C_\G(\g)$ has rank less than $2$, then $C_\G(\g) \backslash C_G(\g)K/K$ is infinite, which contradicts the finiteness of $X_\G$. Therefore $C_\G(\g)$ is a rank two abelian group.

(3) When $\g$ is irregular, then $h\m \g h=  r_\g  = \diag(1, a, a)$ for some $h \in G$, and $h\m C_G(\g) h$
is clearly isomorphic to $\GL_2(F)$. Under this isomorphism, $h\m C_\G(\g) h$ is mapped to a discrete co-compact torsion-free subgroup of $\GL_2(F)$.
\end{proof}

In what follows, we assume that $\G$ satisfies the two additional conditions below:
\begin{itemize}
\item[(I)] $ \ord_{\pi}$$ \det \G \subset 3 \mathbb Z$
so that $\G$ identifies vertices of the same type, and consequently
$X_\G$ is a finite connected $(q+1)$-regular $2$-dimensional
simplicial complex.

\item[(II)] $\G$ is {\it regular}, that is, $\G$ does not contain irregular elements.
Equivalently, the centralizer in $G$ of
any non-identity element in $\G$ is a torus.
\end{itemize}

\begin{remark}The condition (II) is imposed to ease our computations. As shown in \cite{KLW} using representation-theoretic approach,
this assumption is not needed.
\end{remark}

\subsection{The type and lengths of a homotopy class}

 The type, geometric length
and algebraic length of a homotopy class $\kappa_\g(gK)$ of $X_\G$
are those of $\kappa_\g(gK)$ in $\B$. In other words, If $g^{-1}\g g
\in T_{n,m}$, then $\kappa_\g(gK)$ has algebraic length
$l_A(\kappa_\gamma(gK))=n + 2m$, geometric length
$l_G(\kappa_\g(gK))= n + m$, and type $(n,m)$. By assumption,
$\kappa_\g(gK)$ has positive length if and only if $\g$ is not
identity.

\subsection{The type and lengths of
$[\g]$}

Let $\g \in [\G]$ be non-identity, and let $r_\g$ be its rational form as defined in \S \ref{rationalform}.
Fix a choice of $P_\g \in G$ such that $r_\g = (P_\g)^{-1} \g P_\g$.  As the centralizers of $\g$
 and $r_\g$ are related by
  $C_G(\g) = P_\g C_G(r_\g)P_\g^{-1}$, we have $C_\G(\g)P_\g = P_\g C_{P_\g^{-1}\G
P_\g}(r_\g)$, and $[\g]$ may be expressed in two ways:
\begin{eqnarray}\label{classofgamma2}
[\g] &=& \{ \kappa_\g(gK) ~|~  g \in C_\G(\g)\backslash G/K \} \notag \\
&=& \{ \kappa_\g(P_\g gK) ~|~  g \in C_{P_\g^{-1} \G P_\g}
(r_\g)\backslash G/K \} .
\end{eqnarray}
The second expression will facilitate our computations later on.

 Suppose $r_\g \in T_{n,m}$. We say that
$[\g]$ has type $(n, m)$, algebraic length $l_A([\g]) = n + 2m$ and
geometric length $l_G([\g]) = n + m$. As before, call $[\g]$ of type $1$ or $2$ according as $m=0$ or $n=0$. We shall prove
\begin{theorem}\label{charlAandlG} Let $\g \in [\G]$ and $\g \ne id$. Then
\begin{eqnarray*}
 l_A([\g]) = min_{\kappa_\g(g K) \in [\g]} ~l_A(\kappa_\g(g K)) \quad \text{and} \quad
l_G([\g]) = min_{\kappa_\g(g K) \in [\g]} ~l_G(\kappa_\g(g K)).
\end{eqnarray*}
Moreover, for $g \in C_G(r_\g)$, we have $l_A(\kappa_\g(P_\g gK))=
l_A([\g])$, $l_G(\kappa_\g(P_\g gK))= l_G([\g])$ and the type of
$\kappa_\g(P_\g gK)$ coincides with the type of $[\g]$.
\end{theorem}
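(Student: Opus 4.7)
I would reduce both minimum statements to a dominance-type inequality for the Cartan positions of conjugates of the rational form $r_\g$.

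The ``moreover'' clause follows by direct computation: for $g \in C_G(r_\g)$,
$$(P_\g g)^{-1}\g(P_\g g) = g^{-1}(P_\g^{-1}\g P_\g)g = g^{-1}r_\g g = r_\g \in T_{n,m},$$
so the cycle $\kappa_\g(P_\g gK)$ has type $(n,m)$, algebraic length $n+2m$, and geometric length $n+m$, all matching the type and lengths attached to $[\g]$. In particular, the minima on the right-hand sides of the asserted equalities are bounded above by $n+2m$ and $n+m$ respectively.

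For the reverse inequalities, every cycle in $[\g]$ has the form $\kappa_\g(hK)$ with $h = P_\g g$ for some $g \in G$, and $h^{-1}\g h = g^{-1}r_\g g$. If this element lies in $T_{n',m'}$, the task is to show $n'+m' \geq n+m$ and $n'+2m' \geq n+2m$. Identifying the Cartan coweight of $r_\g$ with the triple $(0,m,m+n)$ of diagonal valuations, this is precisely the statement that the Cartan coweight of any conjugate $g^{-1}r_\g g$ dominates $(0,m,m+n)$ in the standard dominance order on dominant coweights of $\PGL_3$. Geometrically, it is the assertion that the displacement of $r_\g$, acting as an isometry of the building $\B$, is minimized on its translation flat, which contains the vertices $P_\g C_G(r_\g) K / K$.

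To prove the dominance I would split into cases according to the classification of $\g$. For split $\g$ with $r_\g = \diag(1,a,b)$ diagonal, the Iwasawa decomposition $g = n t k$ (with $n$ unipotent, $t$ diagonal hence centralizing $r_\g$, and $k$ compact) shows that $g^{-1}r_\g g$ has the same Cartan type as $t^{-1} n^{-1} r_\g n t$, an upper-triangular matrix with diagonal $(1,a,b)$ whose off-diagonal entries depend on $n$, $t$, and the differences $(1-a), (1-b), (a-b)$. An explicit elementary-divisor computation on this matrix yields the dominance inequality. For rank-one split $\g$, I would extend scalars to the quadratic splitting field $L = F\langle\g\rangle$, apply the split-case analysis in $\PGL_3(L)$ where $r_\g$ diagonalizes, and descend back to $\PGL_3(F)$. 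The main technical obstacle is this descent, since Cartan types in $\PGL_3(F)$ are not transparently read off from those in $\PGL_3(L)$, particularly when $L/F$ is ramified; the detailed algebraic characterizations developed in \S5 and \S6ly for split and rank-one split conjugacy classes provide the tools needed. Once dominance is secured in both cases, the inequalities $l_A(\kappa_\g(hK)) \geq n+2m$ and $l_G(\kappa_\g(hK)) \geq n+m$ are immediate, completing the proof.
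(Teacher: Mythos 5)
Your handling of the ``moreover'' clause and of the split case matches the paper's proof exactly: the identity $(P_\g g)^{-1}\g(P_\g g) = r_\g$ for $g \in C_G(r_\g)$ gives the upper bound, and the Iwasawa decomposition $G = C_G(r_\g)\,U\,K$ (the paper writes the torus first so it drops out immediately) reduces the lower bound to an elementary-divisor computation on $u^{-1} r_\g u$ for $u \in U$, which is precisely Theorem \ref{minlength}. The dominance statement you formulate is the correct way to think about the inequalities.

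The gap is in the rank-one split case, where your route diverges from the paper's. The paper does not base-change: it proves Proposition \ref{rankonerep}, which identifies explicit representatives $\left(\begin{smallmatrix}1 & x & y\\ & 1 & 0\\ & & \pi^n\end{smallmatrix}\right)$ for $C_G(r_\g)\backslash G/K$ via Flicker's lemma, and then computes the elementary divisors of $g^{-1}r_\g g$ directly in $\PGL_3(F)$ (Theorem \ref{rankoneminlength}). Your proposed base-change to $L=F\langle\g\rangle$ runs into exactly the issues you flag, and they are not cosmetic. First, the datum $(n,m)$ defining $l_A([\g])$ and $l_G([\g])$ is the Cartan type of $r_\g$ over $F$, while after diagonalizing over $L$ you would be working with the Cartan type of $\diag(a,\,e+d\lambda,\,e+d\bar\lambda)$ over $L$; the translation between these two is not a uniform normalization but a case-by-case analysis in $\ord_\pi a$, $\ord_\pi e$, $\ord_\pi d$, $\ord_\pi c$ --- which is exactly the content of Theorem \ref{rankoneminlength}(1). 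Second, the split-case argument over $L$ uses the Iwasawa decomposition $G_L = T_L\,U_L\,K_L$, and this does not restrict to a decomposition of the $F$-points: the split torus $T_L$ meets $G_F$ in the non-split torus $C_G(r_\g)$, and an arbitrary $g\in G_F$ does not have its $L$-Iwasawa factors in $G_F$, so you cannot simply ``apply the split case'' and read off the $F$-inequality. Appealing to ``the tools developed in \S 6'' to carry out the descent is circular, since \S 6 is the direct $F$-rational proof. Finally, the CAT(0) min-set heuristic (displacement minimized on the translation flat) is suggestive but not a proof of the two combinatorial inequalities $e_3 - e_1 \ge n+m$ and $e_2 + e_3 - 2e_1 \ge n+2m$: the CAT(0) displacement is a Euclidean quantity in the apartment, while $l_G$ and $l_A$ are distinct linear functionals on the Cartan cone, and minimizing the former does not formally minimize the latter two simultaneously.
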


 The second assertion is obvious since $(P_\g g)^{-1} \g P_\g g = g^{-1} r_\g g = r_\g$ for
 $g \in C_G(r_\g)$.
 The proof of the first assertion is contained in Theorem \ref{minlength} for $\g$ split
 and Theorem \ref{rankoneminlength} for $\g$
 rank-one split.

Note that $l_A(\kappa_\gamma(gK)) \equiv \ord_{\pi} \det \g  \pmod
3$, hence $l_A(\kappa_\g(gK)) = l_A([\g]) + 3m$ for some
non-negative integer $m$.

\subsection{Tailless cycles}\label{taillesscycles}

In view of Theorem \ref{charlAandlG},
 a homotopy class $\kappa_\g(gK)$ is called {\it
algebraically tailless} if its algebraic length agrees with
$l_A([\g])$. It is called {\it tailless} if its geometric length is
$l_G([\g])$.
By Proposition \ref{basepointfreehomotopy},
 a tailless {vertex-}based homotopy cycle has shortest
geometric length among all cycles base-point free homotopic to it.

\subsection{The volume of $[\g]$}\label{volume}
By Proposition \ref{centralizer} and assumption (II), $C_G(r_\g)$ is a torus in
$G$ containing the discrete cocompact subgroup $C_{P_\g^{-1}\G
P_\g}(r_\g)$.  Let
$$\Omega = \cup_{g \in G} ~g^{-1}Kg.$$
Then the double coset $C_{P_\g^{-1}\G
P_\g}(r_\g)\backslash C_G(r_\g)/(C_G(r_\g)\cap \Omega)$ is finite.
Its cardinality is the same as that of $C_\G(\g) \backslash C_G(\g)/ C_G(\g) \cap \Omega$, called the {\it volume of} $[\g]$:
\begin{eqnarray}\label{fundamentaldomain}
\vol([\g]) = \# \big(C_{P_\g^{-1} \G P_\g}(r_\g) \backslash
C_G(r_\g)/(C_G(r_\g)\cap \Omega)\big) = \# \big(C_\G(\g) \backslash C_G(\g)/ C_G(\g) \cap \Omega \big).
\end{eqnarray}

Observe that
\begin{lemma}\label{Omega} For any $g \in G$, we have $C_G(r_\g) \cap gKg^{-1} = C_G(r_\g) \cap gKg^{-1} \cap K$. Consequently,
$C_G(r_\g)\cap \Omega = C_G(r_\g)\cap K$.
\end{lemma}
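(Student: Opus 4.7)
The plan is to reduce the first equality to the single inclusion $T^c \subseteq K$, where $T := C_G(r_\g)$ is a torus (by Proposition \ref{centralizer} and condition (II)) and $T^c$ denotes its unique maximal compact subgroup. The reduction is a standard compactness argument: for $t \in T \cap gKg^{-1}$, the powers $\{t^n\}_{n \in \Z}$ all lie in the compact set $gKg^{-1}$, hence are bounded in $G$; since $T$ is closed in $G$, they are bounded in $T$, and this forces $t \in T^c$. Once $T^c \subseteq K$ is in hand, one gets $T \cap gKg^{-1} \subseteq T \cap gKg^{-1} \cap K$, and the reverse inclusion is trivial. The consequence then drops out by taking the union over $g$: $C_G(r_\g) \cap \Omega = \bigcup_g (T \cap gKg^{-1}) \subseteq T^c \subseteq T \cap K$, while $T \cap K \subseteq T \cap \Omega$ is clear from $K \subseteq \Omega$.

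To verify $T^c \subseteq K$ the plan is to treat the two admissible cases of $\g$ (split and rank-one split, the only possibilities under condition (II)) separately, using the explicit form of $r_\g$ from \S\ref{rationalform}. In the split case $r_\g = \diag(1, a, b)$ and $T$ is the standard diagonal torus of $\PGL_3(F)$; a coset $[\diag(t_1, t_2, t_3)]$ has bounded powers in $\PGL_3(F)$ precisely when $\ord_\pi t_1 = \ord_\pi t_2 = \ord_\pi t_3$, and in that case $\diag(1, t_2/t_1, t_3/t_1)$ is an integral, unit-determinant representative, so the coset lies in $K$. In the rank-one split case the parametrization from the proof of Proposition \ref{centralizer}(1) applies directly to $r_\g$: the block form of $r_\g$ together with the irreducibility of $x^2 - bx - c$ forces every element of $C_{\GL_3}(r_\g)$ to be $\diag(\alpha, M)$ with $\alpha \in F^\times$ and $M = \left(\begin{smallmatrix} X & Yc \\ Y & X+Yb \end{smallmatrix}\right)$, and rescaling to $\alpha = 1$ identifies $T$ with $L^\times$ via $X + Y\lambda \leftrightarrow [\diag(1, M)]$, under which $T^c$ corresponds to $\mathcal{O}_L^\times$. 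Since the normalization of $\lambda$ in \S\ref{rationalform} gives $b, c \in \oo$ and $\mathcal{O}_L = \oo + \oo \lambda$, a unit $X + Y\lambda \in \mathcal{O}_L^\times$ satisfies $X, Y \in \oo$ and $\det M = N_{L/F}(X + Y\lambda) = X^2 + bXY - cY^2 \in \oo^\times$, which is exactly the condition $\diag(1, M) \in \GL_3(\oo)$. Hence the corresponding element of $\PGL_3(F)$ lies in $K$.

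The delicate point is the rank-one split verification, where the normalization of $\lambda$ is essential: the identity $\mathcal{O}_L = \oo[\lambda]$ (which holds because $\lambda$ is a unit generating the residue extension in the unramified case and a uniformizer in the ramified case) is what lets us translate ``compact element of the torus'' into ``integer matrix entries with unit determinant.'' Apart from that bookkeeping, everything reduces to the elementary fact that a torus has a unique maximal compact subgroup, into which any torus element contained in a compact subgroup of $G$ must fall.
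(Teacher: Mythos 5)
Your proof is correct, and it takes a genuinely different route from the paper's. The paper argues directly: since $h \in C_G(r_\g)\cap gKg^{-1}$ is conjugate into $K$, its characteristic polynomial has $\oo$-coefficients and unit constant term, and one then reads off from the explicit entries of $h$ (diagonal in the split case, or $\alpha, \mu, \nu$ in the rank-one split case) that $h$ must have integral entries and unit determinant. You instead invoke the structural fact that a torus $T$ over a local field has a unique maximal compact subgroup $T^c$ and that any $t \in T$ whose powers stay in a compact set must lie in $T^c$, reducing everything to the single inclusion $T^c \subseteq K$; you then identify $T^c$ explicitly (as $(\oo^\times)^2$ for the split torus, as $\mathcal{O}_L^\times$ under $T\cong L^\times$ in the rank-one case). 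Your argument is cleaner and more modular, and it makes transparent that only the compact part of the torus can hit $\Omega$. The paper's is more elementary and self-contained — it never needs to invoke the proper fibration $T \to T/T^c \cong \Z^r$ or the topological uniqueness of $T^c$.

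One point worth making explicit: both proofs hinge, in the unramified rank-one case, on the claim that $\mu + \nu\lambda \in \mathcal{O}_L^\times$ forces $\mu, \nu \in \oo$ — equivalently, that $\oo[\lambda] = \mathcal{O}_L$. The normalization in \S\ref{rationalform} only says $\lambda$ is a unit; a unit generating $L$ over $F$ need not have residue generating the residue extension (one also needs the discriminant $b^2+4c \in \oo^\times$), so this is an additional implicit constraint on the choice of $\lambda$. You flag this dependence and even state the needed condition ($\lambda$ generates the residue extension), whereas the paper buries it in the phrase ``the choice of $\lambda$ implies $\mu,\nu\in\oo$.'' So your treatment is, if anything, more forthcoming about what the normalization must supply.
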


\begin{proof} Let $g \in G$. It suffices to show
$C_G(r_\g)\cap gKg^{-1} \subset K$. Suppose
$h \in C_G(r_\g) \cap gKg^{-1}$.
 Without loss of generality, we may assume that the
eigenvalues of $h$ are roots of $f(x)$, the characteristic polynomial of
some element in $GL_3(\oo)$. We distinguish two cases.

Case I. $\g$ is split.  Then $C_G(r_\g)$ consists of diagonal matrices in $G$. Let $\alpha, \mu, \nu$
be the diagonal entries of $h$ with
$\ord_\pi \alpha \ge \ord_\pi \mu \ge \ord_\pi \nu$. Then
$f(x)= (x - \alpha)(x - \mu)(x - \nu)$ lies in $\oo[x]$. The constant term of $f(x)$ is a unit
in $\oo$, which implies $\ord_\pi \alpha \ge 0 \ge \ord_\pi \nu$. Thus if $\alpha, \mu, \nu$
have the same order, then they are all units. If not, then $\ord_\pi \alpha > 0$ and $\ord_\pi \nu < 0$.
Since the coefficient of $x$ in $f(x)$ is in $\oo$, we have $\ord_\pi \mu \nu \ge 0$, which contradicts
$\ord_\pi \alpha \mu \nu = 0$. Therefore $h \in K$.

Case II. $\g$ is rank-one split. Then, as in  the proof of Proposition \ref{centralizer},(1),
 $h = \left(
\begin{matrix} \alpha &  &  \\ & \mu & c \nu \\ & \nu& \mu+ b \nu
\end{matrix}\right)$ with eigenvalues $\alpha, \mu + \nu \lambda, \mu + \nu \bar{\lambda}$.
Here $\lambda$ has minimal polynomial $x^2 - bx - c$ over $F$, and $\lambda$ is either a unit of a uniformizer
in the field $F(\lambda)$.
Thus $\beta:=\alpha (\mu^2 + \mu \nu b - \nu^2 c)$ is a unit
in $F$, and $\alpha + 2 \mu + b \nu$ and $\delta:=\alpha (2 \mu + b \nu) + \mu^2 + \mu \nu b - \nu^2 c$
both lie in $\oo$. If $\ord_\pi \alpha < 0$, then $\ord_\pi $$(2 \mu + b \nu) = \ord_\pi $$\alpha < 0$ so that
$\ord_\pi $$\alpha (2 \mu + b \nu) < 0$ while
$\ord_\pi $$(\mu^2 + \mu \nu b - \nu^2 c) = \ord_\pi \beta/\alpha = - \ord_\pi \alpha > 0$, contradicting
$\delta \in \oo$. If $\ord_\pi \alpha > 0$, then
$2 \mu + b \nu \in \oo$ so that $\alpha (2 \mu + b \nu) \in \oo$ while
$\ord_\pi $$(\mu^2 + \mu \nu b - \nu^2 c) < 0$. We obtain the same contradiction. Thus
$\alpha$ is a unit, and so are $\mu + \nu \lambda$ and $\mu + \nu \bar{\lambda}$. The choice of $\lambda$
implies $\mu, \nu \in \oo$. Hence
$h \in K$, as desired.
\end{proof}

Thus we can express $\vol([\g])$ as
\begin{eqnarray}\label{fundamentaldomain'}
\vol([\g]) = \# \big(C_{P_\g^{-1} \G P_\g}(r_\g) \backslash
C_G(r_\g)/(C_G(r_\g)\cap K)\big).
\end{eqnarray}

\begin{remark}For any integer $m \ne 0$, the eigenvalues of $\g^m$ are the $m$-th
power of those of $\g$, hence $r_{\g^m} =(r_\g)^m$ up to a central
element (due to normalization), and thus we may assume $P_{\g^m} =
P_\g$. Consequently, $P_{\g^m}^{-1} \G P_{\g^m} = P_\g^{-1} \G P_\g$
for all $m \ne 0$. Clearly, $C_G(r_\g) \subseteq C_G(r_{\g^m})$. The reverse containment
follows from the argument in the proof of Proposition \ref{centralizer}. Therefore
$C_G(r_{\g^m}) = C_G(r_\g)$. This shows
 $\vol([\g]) = \vol([\g^m])$ for all $m \ne 0$.
 \end{remark}

\section{Hecke operators on $\B$ and on $X_\G$}

\subsection{Recursive relations among Hecke operators}
It is well-known that each Hecke operator is a polynomial in $A_1$ and
$A_2$. Tamagawa \cite{Ta} obtained a recursive relation on Hecke operators:

\begin{eqnarray}\label{rationalTnm}
(\sum_{n,m \geq 0} T_{n,m}u^{n+2m})(I-A_1 u + q A_2 u^2 - q^3 u^3 I)
= (1 - u^3)I.
\end{eqnarray}

\noindent We prove a different recursive formula adapted for our needs.

\begin{theorem}
\begin{eqnarray}\label{recursivehecke}
~\qquad q \sum^{\infty}_{k=1} T_{k,0} u^k -
(q-1)(\sum^{\infty}_{k=1} \sum_{n+2m = k} T_{n,m}
u^k)\frac{1-q^2u^3}{1-u^3}= u \frac{d}{du}
\log\frac{(1-u^3)^{r}I}{I-A_1 u + A_2 q u^2 -q^3u^3 I},
\end{eqnarray} where $ r=\frac{(q+1)(q-1)^2}{3}$.
\end{theorem}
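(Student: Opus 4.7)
The plan is to combine Tamagawa's identity (4.1) with an auxiliary generating-function formula for the type-1 Hecke operators $T_{k,0}$, and then verify the asserted identity by algebraic manipulation.

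First I set $P(u) = I - A_1u + qA_2u^2 - q^3u^3I$ and $S(u)=\sum_{n,m\ge0}T_{n,m}u^{n+2m}$, so that (4.1) reads $S(u)P(u)=(1-u^3)I$. Since $A_1$ and $A_2$ commute, $P(u)$ and $P'(u)$ commute, and the right-hand side of the asserted identity becomes
$$\mathrm{RHS} \;=\; -\frac{3ru^3}{1-u^3}\,I \;-\; u\,P(u)^{-1}P'(u).$$

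The central step is the auxiliary identity
$$P(u)\sum_{n\ge0}T_{n,0}\,u^n \;=\; I - A_2\,u^2 + q(q+1)\,u^3 I, \qquad(\star)$$
equivalent to the recurrence $T_{n,0} = A_1 T_{n-1,0} - qA_2T_{n-2,0} + q^3T_{n-3,0}$ for $n\ge4$ together with the initial data $T_{0,0}=I$, $T_{1,0}=A_1$, $T_{2,0}=A_1^2-(q+1)A_2$, and $T_{3,0}=A_1^3-(2q+1)A_1A_2 + q(q^2+q+1)I$. I would establish $(\star)$ through the Satake isomorphism: under $A_1\mapsto qe_1$, $A_2\mapsto qe_2$ (with $e_3=1$ since we work with $\PGL_3$), one has $T_{n,0}\mapsto q^nP_{(n)}(x;q^{-1})$, where $P_{(n)}$ is the Hall--Littlewood polynomial for the one-row partition. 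A partial-fraction computation starting from the formula $P_{(n)}(x;t)=\sum_i x_i^n\prod_{j\ne i}(x_i-tx_j)/(x_i-x_j)$ yields the three-variable generating function
$$\sum_{n\ge0}P_{(n)}(x;t)\,u^n \;=\; \frac{1 - t\,e_2(x)\,u^2 + t(1+t)\,e_3(x)\,u^3}{\prod_{i=1}^3(1-x_iu)},$$
and the substitution $t=q^{-1}$, $u\mapsto qu$ (with $e_3=1$) converts this into $(\star)$.

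Granted $(\star)$ and (4.1), the rest is routine. Multiply both sides of the asserted identity by $P(u)$: on the left, $(\star)$ evaluates $P(u)\sum_{k\ge1}T_{k,0}u^k$, and Tamagawa's relation rewrites $(S(u)-I)P(u)$ as $(1-u^3)I - P(u)$; on the right, $uP(u)^{-1}P'(u)\cdot P(u) = uP'(u) = -A_1u + 2qA_2u^2 - 3q^3u^3I$. Collecting terms, all multiples of $P(u)$ coalesce into $(q-1)P(u)$ provided
$$(q-1)(1-q^2u^3) + 3ru^3 \;=\; (q-1)(1-u^3),$$
which holds precisely because $3r = (q+1)(q-1)^2$. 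The remaining operator terms cancel as well, completing the proof.

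The main obstacle is the auxiliary identity $(\star)$. The Satake route requires knowing the Hall--Littlewood generating function in three variables, while a direct Hecke-algebraic proof of $(\star)$ would require an explicit $K$-double-coset decomposition of the product $A_1\cdot T_{n-1,0}$. Either way this is the computational heart of the argument; once $(\star)$ is in hand, the reduction to the stated identity is mere bookkeeping with (4.1) and the elementary identity for $r$.
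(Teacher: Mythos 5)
Your proposal is correct, and it takes a genuinely different route than the paper. The paper proves the identity entirely inside the Satake image: it computes $\psi(T_{k,0})$ and $\psi(T_k)$ directly by counting left $K$-cosets, packages these into generating functions in $\sigma_{k,1},\sigma_{k,2},\sigma_{k,3}$, and verifies the identity as an equality of rational symmetric functions in $z_1,z_2,z_3$. Tamagawa's relation (4.1) is stated beforehand but is not actually invoked in the paper's proof. You instead isolate the single auxiliary identity $P(u)\sum_{n\geq 0}T_{n,0}u^n = I - A_2u^2 + q(q+1)u^3I$, prove it via the Satake isomorphism and the Hall--Littlewood generating function, and then combine it with Tamagawa's relation; after multiplying the asserted identity by $P(u)$, everything reduces to the scalar identity $(q-1)(1-q^2u^3)+3ru^3=(q-1)(1-u^3)$ plus cancellation of the remaining operator terms, both of which I checked and which hold. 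I also checked $(\star)$ against the paper's own coset counts for $T_{k,0}$: the numerator $1 - \tfrac{1}{q}e_2(w) + (\tfrac{1}{q}+\tfrac{1}{q^2})e_3(w)$ in $w_i=qz_iu$ indeed collapses to $1 - u^2\psi(A_2) + q(q+1)u^3$, so $(\star)$ is right, and the three-term recurrence $T_{n,0}=A_1T_{n-1,0}-qA_2T_{n-2,0}+q^3T_{n-3,0}$ with your stated initial values is the correct unpacking of $(\star)$. The trade-off: the paper's proof is self-contained but does all the work as an explicit symmetric-function computation, whereas yours delegates the combinatorial core to two standard results (Tamagawa's identity and the Hall--Littlewood generating function), making the remaining reduction genuinely routine and more transparent. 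If you want your argument to be fully self-contained, you would still have to derive the Hall--Littlewood generating function or, equivalently, do the same coset count that yields $\psi(T_{k,0})$ in the paper.
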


\begin{proof}
The algebra of Hecke operators is isomorphic to
the polynomial ring $\mathbb{C}[z_1,z_2,z_3]^{S_3} /
\langle z_1z_2z_3 - 1 \rangle$ under the Satake isomorphism $\psi$ (cf. \cite{Sat}).
To describe its values on $\{T_{n,m}\}$, let $\chi$ be the
quasi-character on the Borel subgroup $P$ of $G$ defined by
$$ \chi \left( \left[\begin{matrix} b_1 & * & * \\ & b_2 & * \\ & & b_3 \end{matrix}\right]\right)
= z_1^{\ord_\pi (b_1)}z_2^{\ord_\pi (b_2)}z_3^{\ord_\pi (b_3)},  $$
and regard it as a map from $G/K$ to $\mathbb{C}[z_1,z_2,z_3] /
\langle z_1z_2z_3 - 1 \rangle$. (The relation $z_1z_2z_3 = 1$ follows from the fact
that $\chi$ is trivial on scalar matrices.)
Denote by $\delta_P$ the modular
character on $P$. Let $\phi$ be the function on $G$ given by
$$\phi(bk)=\chi(b)\delta_P^{1/2}(b) \qquad (b\in P, k \in K).$$
\noindent Then the value of the Satake isomorphism at $T_{n,m}$ is
$$\psi(T_{n,m}) = \int_G T_{n,m}(g)\phi(g) dg \qquad (n \ge 0, ~m \ge 0),$$ where $dg$ is the
Haar measure on $G$ so that $K$ has volume $1$. Direct computations give
$\psi(A_1)=q(z_1 + z_2 + z_3)$, $\psi(A_2)=q(z_1z_2 + z_2z_3 + z_3z_1)$ and
$$ \psi(I-A_1 u + q A_2 u^2 - q^3 u^3 I) = (1-qz_1u)(1-qz_2u)(1-qz_3u).$$

 For $k \ge 1$, let $T_k = \sum_{n+2m=k} T_{n,m}$, and set
$$\sigma_{k,1}(z_1,z_2,z_3)= z_1^k + z_2^k + z_3^k,  \quad \sigma_{k,2}(z_1,z_2,z_3)= \sum_{1\le a \le k-1} z_1^a z_2^{k-a} + z_2^a z_3^{k-a} + z_3^a z_1^{k-a},$$ and $$\sigma_{k,3}(z_1,z_2,z_3)= \sum_{a, b, c \ge 1, a+b+c=k}z_1^a z_2^b z_3^c. $$
Our strategy is to show that the identity (\ref{recursivehecke}) holds after applying the
Satake isomorphism $\psi$. For this, it suffices to compute the coefficient of
$z_1^{a_1}z_2^{a_2}z_3^{a_3}$ in $\psi(T_k)$ with $a_1 \ge a_2 \ge
a_3 \ge 0$ and $a_1+ a_2 + a_3 = k$, then use symmetry to determine $\psi(T_k)$.

It is straightforward to check that the number of elements
 $gK \in \bigsqcup_{n+2m=k}  T_{n,m} $ mapped to $z_1^{a_1}z_2^{a_2}z_3^{a_3}$ by $\chi$ is equal to
$q^{2a_1+a_2}$ if $a_3=0$, and $(q^3-1) q^{2a_1+a_2-3}$ if $a_3>0$.
Moreover, for such $gK$ we have $\delta_P(gK)^{1/2}= q^{a_3-a_1}$.
Therefore the coefficient of $z_1^{a_1}z_2^{a_2}z_3^{a_3}$ in
$\psi(T_k)$ is equal to $q^{a_1+a_2+a_3}$ or
$q^{a_1+a_2+a_3-3}(q^3-1)$ according to $a_3 = 0$ or $a_3 > 0$. By
symmetry, this yields
$$\psi(T_k)= q^{k}(\sigma_{k,1}+\sigma_{k,2}+\frac{q^3-1}{q^3}\sigma_{k,3}).$$
Noting that
$$\sum^{\infty}_{k=1}\sigma_{k,3}u^k=
((z_1z_2z_3)u^3+(z_1z_2z_3)^2u^6+\cdots)\sum^{\infty}_{k=0}(1+\sigma_{k,1}+\sigma_{k,2})u^k
=
\frac{u^3}{1-u^3}\sum^{\infty}_{k=0}(1+\sigma_{k,1}+\sigma_{k,2})u^k,$$
we obtain
\begin{eqnarray*}
\psi( \sum^{\infty}_{k=1} T_k u^k )
&=& \sum^{\infty}_{k=1} (\sigma_{k,1}+\sigma_{k,2}+\frac{q^3-1}{q^3}\sigma_{k,3})(qu)^k \\
&=& \frac{(q^3-1)u^3}{1-q^3u^3}+\frac{1-u^3}{1-q^3u^3}\sum^{\infty}_{k=1} (\sigma_{k,1}+\sigma_{k,2})(qu)^k.
\end{eqnarray*}

On the other hand, put $G_0 = \bigsqcup^{\infty}_{k=1} T_{k,0}$. One
verifies that the number of elements in $G_0/K$ mapped to
$z_1^{a_1}z_2^{a_2}z_3^{a_3}$ by $\chi$ is $q^{2a_1}$ if
$a_2=a_3=0$, $(q-1) q^{2a_1+a_2-1}$ if $a_2>a_3=0$, and
$(q-1)^2q^{2a_1+a_2-2}$ if $a_2\geq a_3>0$. Therefore,
\begin{eqnarray*}
\psi( \sum^{\infty}_{k=1} T_{k,0} u^k )&=& \sum^{\infty}_{k=1} (\sigma_{k,1}+\frac{q-1}{q}\sigma_{k,2}+\frac{(q-1)^2}{q^2}\sigma_{k,3})(qu)^k \\
&=& \frac{q(q-1)^2u^3}{1-q^3u^3}+\frac{1+q u^3 - 2 q^2
u^3}{1-q^3u^3}\sum^{\infty}_{k=1}\sigma_{k,1}(qu)^k
+\frac{(q-1)(1-q^2u^3)}{q(1-q^3u^3)}\sum^{\infty}_{k=1}\sigma_{k,2}(qu)^k.\\
\end{eqnarray*}
Consequently,
\begin{eqnarray*}
& & \psi\bigg(q (\sum^{\infty}_{k=1} T_{k,0} u^k) -
(q-1)(\sum^{\infty}_{k=1} T_k u^k)\frac{1-q^2u^3}{1-u^3}\bigg)
=\sum^{\infty}_{k=0}\sigma_{k,1}(qu)^k + \frac{(q-1)(q^2-1)u^3}{1-u^3}\\
 &=& \frac{z_1 qu}{1-z_1 qu}+\frac{z_2 qu}{1-z_2 qu}+\frac{z_2 qu}{1-z_2 qu} - \frac{3ru^3}{1-u^3}
= u \frac{d}{du} \log \frac{(1-u^3)^r}{(1-z_1 qu)(1-z_2 qu)(1-z_3 qu)}\\
&=& \psi\bigg(u \frac{d}{du} \log\frac{(1-u^3)^r}{I-A_1 u + A_2 q
u^2 -q^3u^3 I} \bigg).
\end{eqnarray*}
\end{proof}

\subsection{Hecke operators on $X_\G$}

The action of the Hecke operator $T_{n,m}$ on $L^2(\G \backslash G/K)$ is represented by the matrix $B_{n,m}$, whose rows and columns are indexed by vertices of $X_\G$
such that the $(\G gK, \G g'K)$ entry records the number of homotopy
classes of geodesic paths from $\G gK$ to $\G g'K$ in $X_\G$ of type
$(n, m)$. Alternatively, this is the number of $\g \in \G$ such that
the homotopy classes of the geodesics from $gK$ to $\g g'K$ have
type $(n, m)$. The trace of $B_{n,m}$ then gives the number of
geodesic cycles of type $(n,m)$ up to homotopy. In other words,
\begin{eqnarray*}
\Tr(B_{n,m}) &=& \#\bigg\{~\kappa_\g(gK) ~|~ \g \in [\G],
~\kappa_\g(gK) \in [\g] \text{ has type }(n,m) \bigg\}.
\end{eqnarray*}
To facilitate our computations, form two kinds of formal power
series:
\begin{equation}\label{sumBnm}
 \sum_{\substack{n,m \geq 0 \\ (n,m)\neq (0,0)}}
\Tr(B_{n,m})u^{n+2m} = \sum_{\g \in [\G], ~\g \ne id}
~\sum_{\kappa_\g(gK) \in [\g]} ~u^{l_A(\kappa_\g(gK))},
\end{equation} and
\begin{equation}\label{sumBn0}
 \sum_{n>0} \Tr(B_{n,0})u^{n}
= \sum_{\g \in [\Gamma], ~\g \ne id} ~\sum_{ \kappa_\g(gK) \in [\g]
\text{ has type $1$}}  ~u^{l_A(\kappa_\g(gK))}.
\end{equation}

Now we rewrite the left hand side of the zeta identity (\ref{zetaidentity}) as
 \begin{proposition}
\begin{eqnarray}\label{lefthand1}
&& u \frac{d}{du}
\log\frac{(1-u^3)^{\chi(X_\G)}}{\det(I-A_1 u + A_2 q u^2 -q^3u^3 I)}\\
&=&q\left(\sum_{n>0} \Tr(B_{n,0}) u^n\right)-(q-1)\left( \sum_{\substack{n,m \geq 0 \\ (n,m)\neq (0,0)}}
\Tr(B_{n,m})u^{n+2m}\right)\frac{1-q^2u^3}{1-u^3}, \notag
\end{eqnarray}
where the operators are on $L^2(\G \backslash G/K)$, $\chi(X_\G)= \frac{(q+1)(q-1)^2}{3}V$ is the Euler characteristic of $X_\G$, and $V$ is the number of vertices
in $X_\G$.
\end{proposition}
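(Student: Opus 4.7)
The plan is to derive the proposition directly from the operator identity~(\ref{recursivehecke}) by descending to the finite-dimensional quotient $L^2(\G\backslash G/K)$, taking the trace, and matching the exponent with the Euler characteristic.

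Each Hecke operator $T_{n,m}$ commutes with the left $\G$-action on $G/K$, hence descends to $L^2(\G\backslash G/K)$, where it acts as the $V\times V$ matrix $B_{n,m}$ by definition. Taking the (finite-dimensional) trace of both sides of~(\ref{recursivehecke}) therefore turns the left-hand side of~(\ref{recursivehecke}) into the right-hand side of the proposition, since $\Tr(T_{n,m}|_{L^2(\G\backslash G/K)})=\Tr(B_{n,m})$ and the scalar coefficients in $u$ pass through the trace. For the right-hand side of~(\ref{recursivehecke}), the scalar factor $(1-u^3)^r I$ commutes with everything, and $\Tr\log=\log\det$ on the $V$-dimensional space gives
$$\Tr\log\frac{(1-u^3)^rI}{I-A_1u+qA_2u^2-q^3u^3I}=\log\frac{(1-u^3)^{rV}}{\det(I-A_1u+qA_2u^2-q^3u^3I)},$$
after which applying $u\,d/du$ produces the left-hand side of the proposition, provided $rV=\chi(X_\G)$.

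It remains to verify $\chi(X_\G)=rV$. Each vertex of $X_\G$ has exactly $q^2+q+1$ outgoing type-$1$ edges, and since every undirected edge carries a unique type-$1$ orientation, the total number of edges is $E=V(q^2+q+1)$. Each edge is contained in $q+1$ chambers while each chamber has three edges, so the number of chambers is $F=(q+1)V(q^2+q+1)/3$. Substituting into $\chi(X_\G)=V-E+F$ and simplifying the polynomial in $q$ yields $\chi(X_\G)=V(q+1)(q-1)^2/3=rV$, completing the argument. The only non-automatic ingredient is this Euler-characteristic count; everything else is a formal consequence of~(\ref{recursivehecke}) once one checks that $\log$, $d/du$, and $\Tr$ are all being applied to operator-valued formal power series in $u$ with constant term $I$, so that the usual identities between them are legitimate.
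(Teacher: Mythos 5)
Your proof is correct and follows essentially the same approach as the paper: descend the recursive identity~(\ref{recursivehecke}) to $L^2(\G\backslash G/K)$, take the trace, use $\Tr\log=\log\det$, and verify $\chi(X_\G)=rV$ by counting vertices, edges, and chambers. The edge and chamber counts, and the resulting polynomial simplification to $(q+1)(q-1)^2/3$, match the paper's computation exactly.
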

\begin{proof}
Note that $B_{n,m}$ is $T_{n,m}$ acting on the space $L^2(\G \backslash G/K)$, so
(\ref{recursivehecke}) also holds with $T_{n,m}$ replaced by $B_{n,m}$. In other words,\begin{eqnarray*}
& & u \frac{d}{du}
\Tr \log\frac{(1-u^3)^rI}{(I-A_1 u + A_2 q u^2 -q^3u^3 I)} \\
&=& q\left(\sum_{n>0} \Tr(B_{n,0}) u^n\right)-(q-1)\left( \sum_{\substack{n,m \geq 0 \\ (n,m)\neq (0,0)}}
\Tr(B_{n,m})u^{n+2m}\right)\frac{1-q^2u^3}{1-u^3},
\end{eqnarray*} where $r = \frac{(q+1)(q-1)^2}{3}$.
Recall that each vertex is incident to $q^2+q+1$ type 1 edges and $q^2+q+1$ type 2
edges so that the total number of undirected edges in $X_\G$ is
$\frac{2(q^2+q+1)}{2}V$. Since each edge is contained in $q+1$ chambers,
the number of chambers in $X_\G$ is
$\frac{(q+1)}{3}(q^2+q+1)V$.  Therefore the Euler characteristic of
$X_\G$ is
$$ \chi(X_\G) = V - (q^2+q+1) V + \frac{(q+1)}{3}(q^2+q+1)V =
\frac{(q-1)^2(q+1)}{3}V = rV.$$
Using the identity
$$ \log( \det A ) =  \Tr ( \log  A)$$
for a $V\times V$ matrix $A$, we have
$$u \frac{d}{du}
\Tr \log\frac{(1-u^3)^rI}{(I-A_1 u + A_2 q u^2 -q^3u^3 I)}=u \frac{d}{du}
 \log\frac{(1-u^3)^{\chi(X_\G)}}{\det(I-A_1 u + A_2 q u^2 -q^3u^3 I)}, $$
which proves the proposition.
\end{proof}
To understand the combinatorial meaning of the right hand side of (\ref{lefthand1}), we first determine the algebraic length of $\kappa_\g (gK)$, then compute
$\sum_{\kappa_\g (gK)\in [\gamma]} u^{l_A(\kappa_\g (gK))}$ and
$\sum_{\kappa_\g (gK)\in [\gamma] ~\text{has type $1$} } u^{l_A(\kappa_\g (gK))}$.

\section{Homotopy cycles in $[\g]$ for $\g$ split}
Let $|~|$ be the valuation on $F$ such that $|\pi| = q^{-1}$.
In this section we fix a split $\g \in [\G]$ with rational form
$r_\g = \diag(1, a, b)$, where $\ord_{\pi}$$ b \ge \ord_{\pi}$$ a \ge
0$.

\subsection{Minimal lengths of homotopy cycles in $[\g]$}

We begin by proving the first assertion of Theorem \ref{charlAandlG}
for the split case.

\begin{theorem}\label{minlength}
Suppose $\g \in \G$ is split with $r_\g = \diag(1, a, b)$, where
$\ord_{\pi}$$ b \ge \ord_{\pi}$$ a \ge 0$. Then
\begin{itemize}
\item[(1)]
$l_A([\g])= \ord_{\pi}$$ a + \ord_{\pi}$$ b = min_{\kappa_\g(gK) \in
[\g]} ~l_A(\kappa_\g(gK))$ and

\item[(2)] $l_G([\g])=
 \ord_{\pi}$$ b = min_{\kappa_\g(gK) \in
[\g]} ~l_G(\kappa_\g(gK))$.
\end{itemize}
\end{theorem}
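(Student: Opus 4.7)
My plan is to prove the minimality statements in (1) and (2) by reducing, via the Iwasawa decomposition of $G$, to a direct Smith-normal-form computation on a single upper-triangular matrix with diagonal $(1,a,b)$.

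The first equality in each of (1) and (2) is immediate from \S\ref{rationalform}: since $r_\g = \diag(1,a,b)$ with $p := \ord_\pi a$ and $q := \ord_\pi b$ satisfying $0 \le p \le q$, one has $r_\g \in T_{q-p,\,p}$, so $l_A([\g]) = p + q$ and $l_G([\g]) = q$ by definition. Using the parametrization (\ref{classofgamma2}), every element of $[\g]$ has the form $\kappa_\g(P_\g gK)$ and its Cartan type is that of $(P_\g g)^{-1} \g (P_\g g) = g^{-1} r_\g g$. So the minimality reduces to showing: for every $g \in G$, if $g^{-1} r_\g g \in T_{n',m'}$, then $n' + 2m' \ge p + q$ and $n' + m' \ge q$.

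Apply the Iwasawa decomposition $G = NAK$, with $N$ the upper-triangular unipotent subgroup, $A$ the diagonal torus, and $K = \PGL_3(\oo)$. Writing $g = n\tau k$ with $n \in N$, $\tau \in A$, $k \in K$, conjugation by $k$ preserves $K$-double cosets, so the Cartan type of $g^{-1} r_\g g$ equals that of
$$M := \tau^{-1} n^{-1} r_\g n \tau.$$
A direct computation shows $M$ is upper-triangular with diagonal $(1,a,b)$: $n^{-1} r_\g n$ is upper-triangular with that diagonal (conjugation of a diagonal matrix by a unipotent upper-triangular matrix preserves both properties), and further conjugation by the diagonal $\tau$ scales off-diagonal entries while fixing the diagonal. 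In particular $M_{11} = 1$, the $\{1,2\}\times\{1,2\}$-minor of $M$ equals $a$, and $\det M = ab$.

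Let $s_1 \le s_2 \le s_3$ be the Smith-invariant exponents of $M$ viewed as an element of $GL_3(F)$, so that the Cartan invariants are $(m', n') = (s_2 - s_1,\, s_3 - s_2)$. Standard Smith-form theory yields the three relations $s_1 + s_2 + s_3 = \ord_\pi \det M = p + q$, $s_1 \le \ord_\pi M_{11} = 0$, and $s_1 + s_2 \le \ord_\pi a = p$. Combining these,
\begin{align*}
n' + 2m' &= s_3 + s_2 - 2 s_1 = (p+q) - 3 s_1 \ge p + q,\\
n' + m' &= s_3 - s_1 = (p+q) - (s_1+s_2) - s_1 \ge q,
\end{align*}
with both bounds sharp at $g = 1$ (where $M = r_\g$, $s_1 = 0$, $s_1 + s_2 = p$, $s_3 = q$). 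Thus $\kappa_\g(P_\g K) \in [\g]$ realizes both minima simultaneously, completing the proof. The main obstacle I anticipate is the bookkeeping of the Iwasawa reduction to upper-triangular form and verifying that its diagonal is $(1,a,b)$ independently of the Iwasawa data $(n,\tau)$; once this is in place, the two desired inequalities reduce to the elementary Smith-form characterization.
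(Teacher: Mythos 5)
Your proof is correct and is essentially the same argument as the paper's. The paper decomposes $G = C_G(r_\g)\, U\, K$ with $U$ the unipotent upper-triangular group (which, since $C_G(r_\g)$ is the diagonal torus, is exactly the Iwasawa decomposition you use), writes out $u^{-1} r_\g u$ explicitly for $u\in U$, and then derives the same three inequalities on the Smith/Cartan exponents from the partial-sum-of-exponents = minimum-valuation-of-minors characterization; your observation that only the diagonal entries and the $\{1,2\}\times\{1,2\}$ minor of the upper-triangular conjugate are needed is a cleaner way of packaging the identical bounds.
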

\begin{proof} The centralizer
$C_G(r_\g)$ consists of the diagonal matrices in $G$ so that $G = C_G(r_\g)UK$, where
$$ U = \bigg \{\left(\begin{matrix} 1 & x & y \\ & 1 & z \\ & &
1\end{matrix}\right) ~|~ x, y, z \in F/\oo \bigg \}.$$ It suffices to consider the
lengths of $\kappa_\g(P_\g gK)$ with $g \in U$. Write $ g =
\left(\begin{matrix} 1 & x & y \\ & 1 & z \\ & &
1\end{matrix}\right)$.  Then
\begin{eqnarray*}(P_\g g)^{-1} \g P_\g g = g\m r_\g g &=& \left(\begin{matrix} 1 & x & y \\ & 1 & z \\ & &
1\end{matrix}\right)\m \left(\begin{matrix} 1 &  &  \\ & a &  \\
& & b\end{matrix}\right)\left(\begin{matrix} 1 & x & y \\ & 1 & z \\
& & 1\end{matrix}\right) \\
&=&   \left(\begin{matrix} 1 & x(1-a) & y(1-b)+xz(b-a) \\
& a & z(a-b) \\ & & b\end{matrix}\right) \in
K \left(\begin{matrix} \pi^{e_1} &  &  \\
& \pi^{e_2} & \\ & & \pi^{e_3}\end{matrix}\right) K
\end{eqnarray*}
for some integers $e_1 \le e_2 \le e_3$. In fact, for $1 \le i \le
3$, $e_1 + \cdots + e_i = min_{y} ~\{\ord_{\pi}$$ y\}$ where $y$ runs
through the determinant of all $i \times i$ minors of $g^{-1}r_\g
g$.
Consequently,
\begin{eqnarray}\label{e1}
e_1 = min \{0, ~\ord_{\pi} x(1-a), ~\ord_{\pi} z(a-b), ~\ord_{\pi}
(y(1-b)+xz(b-a)) \} \leq 0,
\end{eqnarray}
\begin{eqnarray}\label{e1+e2}
e_1+e_2 = min \{\ord_{\pi} a, ~\ord_{\pi} [x(1-a)z(a-b) -
a(y(1-b)+xz(b-a))] \} \leq \ord_{\pi} a, \end{eqnarray} and
\begin{eqnarray}\label{e1+e2+e3}
e_1+e_2+e_3 = \ord_{\pi} a + \ord_{\pi} b.
\end{eqnarray}
In particular, $e_3 \ge \ord_{\pi}$$ b$ from the last two
inequalities. Moreover, we have, for any $g \in G$,
\begin{eqnarray}\label{alglengthbound}
~~l_A(\kappa_\g(P_\g gK))=e_3+e_2+e_1-3e_1=\ord_{\pi} a + \ord_{\pi}
b - 3e_1\geq \ord_{\pi} a + \ord_{\pi} b = l_A([\g])
\end{eqnarray}
 and
\begin{eqnarray}\label{geomlengthbound}
 l_G(\kappa_\g(P_\g gK))=e_3-e_1\geq
\ord_{\pi} b - e_1 \geq \ord_{\pi} b = l_G([\g]).
\end{eqnarray}
As noted before, the equalities in (\ref{alglengthbound}) and
(\ref{geomlengthbound}) hold for $g \in C_G(r_\g)$. Therefore
\begin{eqnarray*}
l_A([\g]) = \min_{\kappa_\g(gK) \in [\g]} l_A(\kappa_\g(gK)) \qquad
\text{ and} \qquad l_G([\g]) = \min_{\kappa_\g(gK) \in [\g]}
l_G(\kappa_\g(gK)).
\end{eqnarray*}
This proves the theorem.
\end{proof}

It follows from (\ref{geomlengthbound}) that if $l_G(\kappa_\g(P_\g
gK)) = l_G([\g]) = \ord_{\pi}$$ b$, then $e_3 = \ord_{\pi}$$ b$ and $e_1
= 0$, which in turn imply $e_2 = \ord_{\pi}$$ a $ because $e_1 + e_2 +
e_3 = \ord_{\pi}$$ a + \ord_{\pi}$$ b$. Hence a  tailless cycle
$\kappa_\g(P_\g gK)$ in $[\g]$  has the same type as $[\g]$.
Further, by (\ref{alglengthbound}), the condition $e_1 = 0$ implies
$l_A(\kappa_\g(P_\g gK)) = l_A([\g])$, so $\kappa_\g(P_\g gK)$ is
also algebraically tailless.

Conversely, suppose $\kappa_\g(P_\g gK)$ is algebraically tailless.
Then $e_1 = 0$, that is, $x(1-a) \in \oo$, $z(a-b) \in \oo$ and
$y(1-b)+xz(b-a) \in \oo$. As seen above, $\kappa_\g(P_\g gK)$ is
 tailless if the additional condition  $e_1 + e_2 =
\ord_{\pi}$$ a$ is satisfied. By (\ref{e1+e2}), this amounts to
$\ord_{\pi}$$ x(1-a)z(a-b) \ge \ord_{\pi}$$ a$, which obviously holds
when $\ord_{\pi}$$ a = 0$, i.e., $\g$ has type $1$. We record the
discussion in

\begin{corollary}\label{typeoftailless}
 Suppose $\g \in [\G]$ is split. Then all tailless
cycles in $[\g]$ are also algebraically tailless, and they have the
same type as $[\g]$. Furthermore, if $[\g]$ has type $1$, then the
algebraically tailless cycles in $[\g]$ are
 tailless.
\end{corollary}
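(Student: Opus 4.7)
The plan is to read off the corollary directly from the bounds derived in the proof of Theorem \ref{minlength}, using the Smith normal form exponents $e_1 \le e_2 \le e_3$ attached to $g^{-1} r_\g g$ for $g$ ranging over the upper triangular unipotent slice $U$. Recall that equations (\ref{alglengthbound}) and (\ref{geomlengthbound}) give
\[
l_A(\kappa_\g(P_\g gK)) = \ord_\pi a + \ord_\pi b - 3e_1 \ge l_A([\g]), \qquad l_G(\kappa_\g(P_\g gK)) = e_3 - e_1 \ge \ord_\pi b = l_G([\g]),
\]
together with the constraints $e_1 \le 0$, $e_3 \ge \ord_\pi b$ and $e_1+e_2+e_3 = \ord_\pi a + \ord_\pi b$ coming from (\ref{e1}), (\ref{e1+e2}) and (\ref{e1+e2+e3}).

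First I would handle the tailless implication. If $\kappa_\g(P_\g gK)$ is tailless, then $e_3 - e_1 = \ord_\pi b$; combined with $e_1 \le 0$ and $e_3 \ge \ord_\pi b$ this forces $e_1 = 0$ and $e_3 = \ord_\pi b$. The identity $e_1+e_2+e_3 = \ord_\pi a + \ord_\pi b$ then pins down $e_2 = \ord_\pi a$, so the Smith invariants of $\kappa_\g(P_\g gK)$ agree with those of $r_\g$, giving type $(n,m) = (\ord_\pi b - \ord_\pi a,\ \ord_\pi a)$ which is precisely the type of $[\g]$. Moreover, $e_1 = 0$ instantly makes the bound (\ref{alglengthbound}) an equality, so $\kappa_\g(P_\g gK)$ is algebraically tailless as well.

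For the final assertion, suppose $[\g]$ has type $1$, i.e.\ $\ord_\pi a = 0$, and that $\kappa_\g(P_\g gK)$ is algebraically tailless, so $e_1 = 0$. To upgrade to tailless I must show $e_3 = \ord_\pi b$, equivalently $e_1 + e_2 = \ord_\pi a = 0$. By (\ref{e1+e2}) the only way this could fail is if $\ord_\pi\bigl(x(1-a) z(a-b)\bigr) < 0$, but then $\ord_\pi x(1-a) < 0$ or $\ord_\pi z(a-b) < 0$, contradicting $e_1 = 0$ via (\ref{e1}). Hence $e_1 + e_2 \le 0$, and since $e_2 \ge e_1 = 0$ we get $e_2 = 0 = \ord_\pi a$, yielding the tailless property. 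I do not anticipate a serious obstacle here: the entire statement is a direct unpacking of inequalities already in hand, and the only subtlety is keeping track of the case $\ord_\pi a = 0$, where the two notions of taillessness collapse.
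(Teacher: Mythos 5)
Your argument is correct and is essentially the paper's own: unpacking the Smith-invariant bounds from Theorem \ref{minlength}, taillessness forces $e_1 = 0$ and $e_3 = \ord_\pi b$, hence $e_2 = \ord_\pi a$, yielding the same type and algebraic taillessness; and when $\ord_\pi a = 0$, algebraic taillessness ($e_1 = 0$) together with $e_1 + e_2 \le \ord_\pi a = 0$ from (\ref{e1+e2}) and $e_2 \ge e_1 = 0$ forces $e_2 = 0 = \ord_\pi a$, i.e.\ taillessness. The excursion through $\ord_\pi\bigl(x(1-a)z(a-b)\bigr)$ in your last paragraph is harmless but superfluous, since the ordering $e_2 \ge e_1 = 0$ already supplies the needed lower bound on $e_1 + e_2$.
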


\subsection{Counting homotopy cycles in $[\g]$ in algebraic length}

Let
$$\Delta_A([\g])= \{ gK \in G/K ~|~
l_A(\kappa_\g(P_\g gK)) = l_A([\g])\}.$$ As noted before,
$\Delta_A([\g]) \supset C_G(r_\g)K/K$ and is invariant under left
multiplication by $C_{P_\g^{-1}\G P_\g}(r_\g)$. So the number of
algebraically tailless cycles in $[\g]$ is the cardinality of
$C_{P_\g^{-1}\Gamma P_\g}(r_\g)\backslash \Delta_A([\g])$.

The following theorem, stated in terms of a formal power series,
gives the number of homotopy cycles of a given algebraic length in
$[\g]$.

\begin{theorem}\label{numberinaclass}
Suppose $\g \in [\G]$ is split with $r_\g = \diag(1, a, b)$. Then
\begin{eqnarray*}
\sum_{\kappa_\g(gK) \in [\g]} u^{l_A(\kappa_\g(gK))}&=&
\#(C_{P_\g^{-1} \G P_\g}(r_\g)\backslash
\Delta_A([\g]))~u^{l_A([\g])}\frac{1-u^3}{1-q^3 u^3} \\
&=& \vol([\g])(|1-a||a-b||b-1|)^{-1}~u^{l_A([\g])}\frac{1-u^3}{1-q^3
u^3},
\end{eqnarray*} where $\vol([\g])$ is given by
(\ref{fundamentaldomain'}).
\end{theorem}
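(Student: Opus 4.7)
I plan to compute the generating function by grouping the terms according to the value of $l_A(\kappa_\g(gK)) - l_A([\g])$, which is a non-negative multiple of $3$ by Theorem \ref{minlength}. Writing $l_A(\kappa_\g(gK)) = l_A([\g]) + 3k$ with $k = -e_1(gK) \ge 0$ and setting $M_k := \#\{gK \in C\backslash G/K : e_1(gK) \ge -k\}$, where $C := C_{P_\g^{-1}\G P_\g}(r_\g)$, the theorem reduces to the two claims
\[
M_0 = \vol([\g])\,|1-a|^{-1}|a-b|^{-1}|b-1|^{-1}, \qquad M_k = q^{3k}\,M_0 \text{ for all } k \ge 0.
\]
Given these, putting $N_0 := M_0$ and $N_k := M_k - M_{k-1}$ for $k \ge 1$, telescoping yields $\sum_{k\ge 0} N_k u^{3k} = M_0(1-u^3)/(1-q^3u^3)$, so multiplying by $u^{l_A([\g])}$ recovers both equalities in the theorem.

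For the scaling $M_k = q^{3k} M_0$ I would use the decomposition $G = T\,U\,K$ with $T = C_G(r_\g)$ the diagonal torus and $U$ the upper unipotent. From the proof of Theorem \ref{minlength}, for $g = u \in U$ with the usual entries $(x,y,z)$, one has $e_1(gK) = \min\{0, \ord_\pi X, \ord_\pi Y, \ord_\pi Z\}$ with $X = x(1-a)$, $Y = y(1-b) + xz(b-a)$, $Z = z(a-b)$, so $e_1 \ge -k$ is precisely $X, Y, Z \in \pi^{-k}\oo$. Since $(x,y,z) \mapsto (X,Y,Z)$ is an $F$-linear bijection with nonvanishing Jacobian $(1-a)(1-b)(a-b)$ (the three eigenvalues $1, a, b$ of the split $\g$ being distinct), the admissible region inflates by a factor of $q^3$ each time $k$ grows by $1$, yielding the claimed $q^{3k}$. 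At $k = 0$ there are $|1-a|^{-1}|a-b|^{-1}|b-1|^{-1}$ admissible $(x, y, z)$-cosets in the big cell, and the factor $\vol([\g])$ enters via \eqref{fundamentaldomain'} as the size of a fundamental domain for $C\backslash T/T(\oo)$, over which $C\backslash G/K$ fibers.

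The main obstacle is handling the $C$-action across the Iwasawa decomposition correctly: the representation $g = ut$ is not canonical, and the left $C$-action $c\cdot(u, t) = (cuc^{-1}, ct)$ couples the torus and unipotent components. I plan to exploit the fact that $e_1$ depends only on the double coset $K g^{-1} r_\g g K$ and is therefore $T$-invariant (since $r_\g$ lies in the center of $T$), which lets me reduce the count on each $T$-orbit to a single slice; the quadratic coupling in $Y = y(1-b) + xz(b-a)$ is absorbed by shifting the $y$-coset once $x, z$ are fixed, so the three linearized coordinates $(X, Y, Z)$ decouple for counting purposes and the $q^{3k}$-inflation descends cleanly to $C$-orbits.
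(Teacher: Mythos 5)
Your overall strategy matches the paper's: use the decomposition $G = C_G(r_\g)\,U\,K$, pull out a factor of $\vol([\g])$, and reduce the count to a sum over the unipotent $U$ using the inequalities (\ref{e1}) through (\ref{e1+e2+e3}). Your cumulative bookkeeping (defining $M_k$ and telescoping $N_k = M_k - M_{k-1}$) is an equivalent variant of the paper's direct count in (\ref{taillessinU})--(\ref{taillength3minU}), and the $q^{3k}$-scaling is correct once you compute in the triangular order $x$, then $z$, then $y$ --- as you do in your last sentence, though you should retract the earlier claim that $(x,y,z)\mapsto(X,Y,Z)$ is $F$-\emph{linear}; it is a unipotent polynomial change of coordinates, not linear, and for exact $p$-adic counting the Jacobian heuristic must be replaced by exactly this iterated fibration argument.

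The genuine gap is the step from $\sum_{\kappa_\g(gK)\in[\g]} u^{l_A(\cdots)}$ to $\vol([\g]) \sum_{v\in U} u^{l_A(\kappa_\g(P_\g vK))}$. You invoke the $T$-invariance of $e_1$ (which is correct, and avoids the spurious "coupling" you worry about: in the $T U K$ decomposition, rather than $U T K$, left multiplication by $c \in C \subset T$ touches only the $T$-slot) to "reduce each $T$-orbit to a single slice." But the number of $C$-orbits inside a $T$-orbit is not obviously constant: the stabilizer $T \cap gKg^{-1}$ depends on $g$ and is generally a proper subgroup of $T \cap K$ once $g$ has nontrivial unipotent part (e.g. for $g = \left(\begin{smallmatrix}1 & \pi^{-1} & 0\\ & 1 & 0\\ & & 1\end{smallmatrix}\right)$ the stabilizer imposes $d_2 \equiv d_1 \bmod \pi$), so fibering over $T\backslash G/K$ does not immediately give a constant factor $\vol([\g])$. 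What is actually needed --- and what the paper proves --- is that the map $(C\backslash\langle s,t\rangle)\times U \to C\backslash G/K$, $(h,v)\mapsto ChvK$, is a bijection, where $\langle s,t\rangle$ is a complement of $T\cap K$ in $T$. Surjectivity follows from $T = \langle s,t\rangle(T\cap K)$ and $(T\cap K)UK = UK$; injectivity is the key point, shown by observing that $v^{-1}h^{-1}h'v'$ is upper triangular with diagonal $h^{-1}h'$, so $h^{-1}h' \in K$ forces $h = h'$ and then $v = v'$. Without this argument (or an equivalent one), the $\vol([\g])$ factorization in your first displayed claim is unjustified.
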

\begin{proof} The group $C_G(r_\g) \cap K$ consists of diagonal matrices
whose nonzero entries are units.
In view of Proposition \ref{centralizer}, there are two generators $s, t \in C_G(r_\g)$
such that $C_G(r_\g) = \langle s, t \rangle (C_G(r_\g) \cap K)$ and $C_{P_\g^{-1}\G P_\g}(r_\g)$
is a subgroup of $\langle s, t \rangle$ of index $\vol([\g])$. We have
$(C_G(r_\g) \cap K)UK = UK$ and $C_{P_\g^{-1}\G P_\g}(r_\g)\backslash G/K
= C_{P_\g^{-1}\G P_\g}(r_\g)\backslash \langle s, t \rangle UK/K$. Suppose
$h, h' \in \langle s, t \rangle$ and $v, v' \in U$ are such that
$C_{P_\g^{-1}\G P_\g}(r_\g)h vK = C_{P_\g^{-1}\G P_\g}(r_\g)h' v'K$.
Replacing $h$ by a suitable multiple from $C_{P_\g^{-1}\G P_\g}(r_\g)$ if necessary,
we may assume $hvK = h'v'K$, which is equivalent to $v^{-1}h^{-1}h'v' \in K$.
Since $v$ and $v'$ are unipotent and $h^{-1}h'$ is diagonal, $v^{-1}h^{-1}h'v'$ is
an upper triangular matrix with diagonal entries being those of $h^{-1}h'$. This
implies that $h^{-1}h' \in K$ and hence is equal to the identity matrix. It then follows
from the definition of $U$ that $v = v'$. This proves that
the left hand side of the identity
can be expressed as
$$\sum_{\kappa_\g(P_\g gK) \in [\g]} u^{l_A(\kappa_\g(P_\g gK))} =
\vol([\g]) ~\sum_{v \in U}  u^{l_A(\kappa_{\g}(P_\g vK))}.$$
To proceed, we compute the sum on the right hand side.

\begin{proposition}\label{sumoverunipotent}
 Let $\g$ be split with $r_\g = \diag(1,a,b)$. Then
\begin{eqnarray*}
\sum_{v \in U} u^{l_A(\kappa_{\g}(P_\g vK))} &=&
\frac{u^{l_A([\g])}}{|1-a||a-b||b-1|}\bigg(\frac{1-
u^3}{1-q^3u^3}\bigg).
\end{eqnarray*}
\end{proposition}
\begin{proof}
Given $v \in U$, write $v = \left(
\begin{matrix}
1 & x & y\\
& 1 & z\\
& & 1
\end{matrix} \right)$.
As computed in the proof of Theorem
\ref{minlength},

$$ (P_\g v)^{-1} \g P_\g v =  v^{\text{-}1}r_\g v =
\left(
\begin{matrix}
1 & x(1-a) & y(1-b)+ xz(b-a)\\
& a & z(a-b)\\
& & b
\end{matrix} \right) = (v_{i, j}).$$
For fixed $m \ge 0$, we count the number of $v$'s such that
$l_A(\kappa_{\g}(P_\g vK)) \leq l_A([\g])+3m$. By
(\ref{alglengthbound}), the constraints are  $ |v_{ij}| \leq q^{m}$
for all $1 \le i,j \le 3$. In other words,
$$ |x(1-a)| \leq q^{m},\quad |z(a-b)|\leq q^{m}
\quad {\rm and} \quad |y(1-b)+ xz(b-a)|\leq q^{m}.$$  This implies
$$ |x| \leq q^{m}|1-a|^{-1} \quad {\rm and} \quad |z| \leq
q^{m}|a-b|^{-1} $$ so that the numbers of $x$ and $z$ in
$F/\mathcal{O}_F$ are $q^{m}|1-a|^{-1}$ and $q^m|a-b|^{-1}$,
respectively. Further, for chosen $x$ and $z$, there are
$q^{m}|1-b|^{-1}$ choices of $y$ satisfying the above constraint. We
have shown
\begin{eqnarray}\label{taillessinU}
\#\big\{v \in U \big| ~l_A(\kappa_{\g}(P_\g vK))=l_A([\g]) \big\} =
(|1-a||a-b||b-1|)^{-1}
\end{eqnarray} and, for $m > 0$,
\begin{eqnarray}\label{taillength3minU}
\#\big\{v \in U \big| ~l_A(\kappa_{\g}(P_\g vK))=l_A([\g])+3m \big\}
= (q^{3m}-q^{3m-3})(|1-a||a-b||b-1|)^{-1}.
\end{eqnarray}
Put together, this gives
\begin{eqnarray*} \sum_{v \in U} u^{l_A(\kappa_{\g}(P_\g vK))} &=&
\frac{u^{l_A([\g])}}{|1-a||a-b||b-1|}\bigg(1+ \sum_{m \ge 1}(q^{3m}-q^{3m-3})u^{3m}\bigg)\\
&=&\frac{u^{l_A([\g])}}{|1-a||a-b||b-1|}\bigg(\frac{1-
u^3}{1-q^3u^3}\bigg).
\end{eqnarray*}
\end{proof}

The argument above shows that the number of algebraically tailless
homotopy classes in  $[\g]$ is $\vol([\g])$ times the number of
elements in $U$ with $m = 0$, which is given by (\ref{taillessinU}).
This proves

\begin{proposition}\label{atailless}
Suppose $\g \in \G$ is split with $r_\g = \diag(1, a, b)$. Then
$$ \#(C_{P_\g^{-1} \G P_\g}(r_\g)\backslash
\Delta_A([\g])) = \vol([\g])(|1-a||a-b||b-1|)^{-1}.$$
\end{proposition}

The proof of Theorem \ref{numberinaclass} is now complete.
\end{proof}

\subsection{Counting homotopy cycles of type $1$ in $[\g]$}

The theorem below gives the number of type $1$ homotopy cycles in
 $[\g]$ of given algebraic length. The result depends on
the type of $[\g]$.

\begin{theorem}\label{type0cyclesinaclass} Suppose $\g \in \G$ is split with
$r_\g = \diag(1, a, b)$.
The following assertions hold.
\begin{itemize}
\item[(i)] If $[\g]$ does not have type $1$, then
$$\sum_{\kappa_\g(gK) \in [\g], ~ \text{type $1$}} u^{l_A(\kappa_\g(gK))}=
\vol([\g])(|1-a||a-b||b-1|)^{-1}~u^{l_A([\g])}(1 - q^{-1})( \frac{1-q^2
u^3}{1-q^3u^3}).$$ Moreover, no type $1$ cycles in $[\g]$ are
tailless.

 \item[(ii)] If $[\g]$ has type $1$, then
$$\sum_{\kappa_\g(gK) \in [\g], ~ \text{type $1$}} u^{l_A(\kappa_\g(gK))}=
\vol([\g])(|1-a||a-b||b-1|)^{-1}~u^{l_A([\g])}\bigg(q^{-1}+ (1 -
q^{-1})( \frac{1-q^2 u^3}{1-q^3u^3}) \bigg).$$
\end{itemize}
\end{theorem}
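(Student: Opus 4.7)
The plan is to parallel the proof of Theorem \ref{numberinaclass}, but restrict attention to $v \in U$ for which $\ka_\g(P_\g vK)$ is of type~$1$. The decomposition $G = C_G(r_\g)UK$ used there yields
\[ \sum_{\ka_\g(gK) \in [\g],~\text{type}~1} u^{l_A(\ka_\g(gK))} \;=\; \vol([\g]) \sum_{\substack{v \in U \\ \text{cycle type}~1}} u^{l_A(\ka_\g(P_\g vK))}, \]
so the task reduces to stratifying unipotent $v$ by algebraic length and imposing the type~$1$ condition.

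The key reformulation is that $\ka_\g(P_\g vK)$ is of type~$1$ iff $M := v^{-1}r_\g v \in T_{n,0}$ for some $n$, equivalently the first two elementary divisors of $M$ satisfy $e_1 = e_2$. Combined with (\ref{alglengthbound}), which gives $e_1 = -k$ whenever $l_A(\ka_\g(P_\g vK)) = l_A([\g])+3k$, the type~$1$ condition becomes: the minimum $\pi$-adic valuation among the nonzero $2\times 2$ minors of $M$—namely $a$, $b$, $ab$, $z(a-b)$, $bx(1-a)$, and $\mu := xz(a-b)-ay(1-b)$—equals $-2k$. Setting $\de_a = \ord_\pi(1-a)$, $\de_b = \ord_\pi(1-b)$, $\de_{ab} = \ord_\pi(a-b)$ and using the entry constraint $\ord_\pi(y(1-b)+xz(b-a)) \ge -k$ (already analyzed in the proof of Proposition \ref{sumoverunipotent}) to reparametrize $y = -xz(b-a)/(1-b) + y'$ with $y' \in \pi^{-k-\de_b}\oo/\oo$, I simplify $\mu$ to $xz(a-b)(1-a)-ay'(1-b)$ to make the valuation bookkeeping tractable.

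The analysis now splits by $k$. For $k \ge 1$, the minors $z(a-b)$ and $bx(1-a)$ cannot reach valuation $-2k$ without violating the entry bounds on $x$ and $z$; in $\mu$, the second summand satisfies $\ord_\pi(ay'(1-b)) \ge \ord_\pi(a) - k > -2k$, so $\ord_\pi(\mu) = -2k$ occurs exactly when its first summand does, which forces $\ord_\pi(x) = -k - \de_a$ and $\ord_\pi(z) = -k - \de_{ab}$ (both at the boundary of the entry constraints). Counting these boundary $(x,z)$ against the free $y'$ yields $(1-q^{-1})^2 q^{3k}(|1-a||a-b||b-1|)^{-1}$ cycles at each such $k$, which is the source of the $(1-q^{-1})(1-q^2u^3)/(1-q^3u^3)$ tail common to both formulas. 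For $k = 0$ the two cases diverge: in Case~(ii), $a$ is a unit so the minor $a$ already has valuation $0$, giving $e_1 = e_2 = 0$ automatically, hence every algebraically tailless cycle (counted by Proposition \ref{atailless}) is of type~$1$; this contributes an extra constant $1$ which combines with the $k\ge 1$ tail into $q^{-1} + (1-q^{-1})(1-q^2u^3)/(1-q^3u^3)$. In Case~(i), $\de_a = \de_b = 0$ forces $x = y = 0$ in $F/\oo$, leaving only the minor $z(a-b)$ able to attain valuation $0$, which requires $\ord_\pi(z) = -\de_{ab}$ and produces the overall prefactor $(1-q^{-1})$. The tailless assertion in~(i) follows because any type~$1$ cycle has geometric length equal to its algebraic length, hence at least $l_A([\g]) = a'+b' > b' = l_G([\g])$ when $a' \ge 1$.

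The main obstacle is the valuation bookkeeping for $\mu$ in the range $k \ge 1$: one must carefully exclude cancellation between its two summands (ruled out by $\ord_\pi(y') \ge -k - \de_b$ after the reparametrization) and handle arbitrary $\de_a, \de_b, \de_{ab}$ uniformly. Once this is in place, multiplying the counts by $\vol([\g])$ and summing with $\sum_{k \ge 1} q^{3k} u^{3k} = q^3 u^3/(1 - q^3 u^3)$ assembles the stated closed forms.
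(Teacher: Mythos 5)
Your proposal is correct and follows essentially the same route as the paper: decompose via $G = C_G(r_\g)UK$, reduce to counting $v \in U$, and impose the type~$1$ condition $e_1 = e_2$ on the elementary divisors of $v^{-1}r_\g v$, stratified by $k = (l_A - l_A([\g]))/3$. The paper phrases the constraint for $k\ge 1$ as $\ord_\pi x(1-a) = -k = \ord_\pi z(a-b)$ and $\ord_\pi(y(1-b)+xz(b-a)) \ge -k$, which is exactly what your reparametrized bookkeeping produces; and for $k=0$ it splits on whether $a$ is a unit, just as you do. Two small remarks worth noting: your explicit enumeration of all six nonzero $2\times 2$ minors ($a,b,ab,z(a-b),bx(1-a),\mu$) and the verification that $z(a-b)$, $bx(1-a)$ cannot attain valuation $-2k$ when $k\ge 1$ is more careful than the paper's display (\ref{e1+e2}), which lists only $a$ and $\mu$; that display is only used as an inequality in the paper, so no error results, but your version is the honest equality and makes the $k=0$ bifurcation cleaner (in case~(i) the constraining minor is $z(a-b)$, since $\mu$ and $bx(1-a)$ vanish once $x=y=0$). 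Your tailless argument for case~(i) is also sound: a type~$1$ cycle has $l_G = l_A \ge l_A([\g]) = \ord_\pi a + \ord_\pi b > \ord_\pi b = l_G([\g])$ when $\ord_\pi a \ge 1$.
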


\begin{remark} The right hand side of the identities in Theorem \ref{numberinaclass}
and Theorem \ref{type0cyclesinaclass} can be expressed as $\vol([\g])$ times the orbital integrals at the split
element $\g$ of suitably chosen spherical functions on $G$ with fast decay.
\end{remark}

\begin{proof} Since $r_\g = \diag(1, a, b)$, $[\g]$ has type
$(\ord_{\pi}$$ b - \ord_{\pi}$$ a, ~\ord_{\pi}$$ a)$ and $l_A([\g]) =
\ord_{\pi}$$ b + \ord_{\pi}$$ a$. It has type $1$ if and only of
$~\ord_{\pi}$$ a = 0$. The argument is similar to the proof of Theorem
\ref{numberinaclass}; the difference is that we only need to
consider those $v \in U$ such that $\kappa_{\g}(P_\g vK)$ has type
$1$. So we count the number of
$$\{v \in U~ |~l_G(\kappa_{\g}(P_\g vK)) =l_A(\kappa_{\g}(P_\g vK)) =
l_A([\g])+3m = \ord_{\pi} ~b + \ord_{\pi} ~a + 3m \}$$ for each $m
\ge 0$. As before, writing $v$ as $\left(
\begin{matrix}
1 & x & y\\
& 1 & z\\
& & 1
\end{matrix} \right)$ and following the proofs of Proposition
\ref{sumoverunipotent} and Theorem \ref{minlength}, we arrive at the
following constraints on $x, y, z \in F/\oo$:
\begin{itemize}
\item[(1)] $\min \{0, ~\ord_{\pi}$$ x(1-a), ~\ord_{\pi}$$ z(a-b),
~\ord_{\pi}$$ (y(1-b)+xz(b-a)) \} = -m,$ ~~~~ and

\item[(2)]  $\min \{\ord_{\pi}$$ a, ~\ord_{\pi}$$ [x(1-a)z(a-b) -
a(y(1-b)+xz(b-a))] \} = -2m.$
\end{itemize}

For $m > 0$, the two constraints are equivalent to
\begin{itemize}
\item[(3)]  $ \ord_{\pi}$$ x(1-a) = -m = \ord_{\pi}$$ z(a-b) ~~\text{
and}~~ \ord_{\pi}$$(y(1-b)+xz(b-a)) \ge -m.$
\end{itemize}
Hence the number of $x$ is $(1-q^{-1})q^m|1-a|^{-1}$, the number of
$z$ is $(1-q^{-1})q^m|a-b|^{-1}$, and the number of $y$ is
$q^m|1-b|^{-1}$ so that the total number of $v$ is
$(1-q^{-1})^2q^{3m}(|1-a||a-b||b-1|)^{-1}$. For $m = 0$ and
$\ord_{\pi}$$ a > 0$, the same constraint (3) holds. In this case the
number of $x$ is $|1-a|^{-1} = 1$, the number of $y$ is $|1-b|^{-1}
= 1$ and the number of $z$ is $(1-q^{-1})|a-b|^{-1}$ so that the
total number of $v$ is $(1-q^{-1})(|1-a||a-b||b-1|)^{-1}$. Finally,
when $m = \ord_{\pi}$$ a = 0$, the constraints (1) and (2) are
equivalent to
\begin{itemize}
\item[(4)]  $ \ord_{\pi}$$ x(1-a) \ge 0,~~ \ord_{\pi}$$ z(a-b) \ge 0
~~$ and $~~ \ord_{\pi}$$(y(1-b)+xz(b-a)) \ge 0.$
\end{itemize}
Hence the numbers of $x$, $y$ and $z$ are $|1-a|^{-1}$, $|1-b|^{-1}$
and $|a-b|^{-1}$, respectively, so that the number of $v$ is
$(|1-a||a-b||b-1|)^{-1}$.

Since $\vol([\g])(|1-a||a-b||b-1|)^{-1}$ is present in both cases, it
suffices to compute
$$\frac{1}{\vol([\g])(|1-a||a-b||b-1|)^{-1}}\sum_{\kappa_\g(gK) \in [\g], ~ \text{type $1$}}
u^{l_A(\kappa_\g(gK))}.$$ In case $\ord_{\pi}$$ ~a > 0$, this sum is
equal to
$$u^{l_A([\g])}(1-q^{-1} + \sum_{m \ge 1}(1-q^{-1})^2q^{3m}u^{3m})
=u^{l_A([\g])}(1 - q^{-1})( \frac{1-q^2 u^3}{1-q^3u^3}),$$ and in
case $\ord_{\pi}$$ ~a = 0$, it is equal to
$$u^{l_A([\g])}(1 + \sum_{m \ge 1}(1-q^{-1})^2q^{3m}u^{3m})
=u^{l_A([\g])}\bigg(q^{-1} + (1 - q^{-1})( \frac{1-q^2
u^3}{1-q^3u^3})\bigg).$$ This proves the theorem.
\end{proof}

Contained in the proof above is the following statement.

\begin{corollary}\label{primitivesplit}
Suppose $\g \in \G$ is split with $r_\g = \diag(1, a, b)$. Assume
that $\g$ has type $1$ with $a \in \oo^\times$ . Let $\delta = \delta([\g]) = \ord_{\pi}$$
(1-a)$ and $n = \ord_{\pi}$$ b$. Then
\begin{eqnarray*}
 \Delta_A([\g]) = \{h v_x K
~|~ h \in  C_G(r_\g)/(C_G(r_\g) \cap K),  ~ v_x = \left(
\begin{matrix}
1 & x & \\
& 1 & \\
& & 1
\end{matrix} \right) ~\text{with} ~
x \in \pi^{-\delta} \oo/\oo \} \end{eqnarray*} and for $hv_xK \in
\Delta_A([\g])$, the geodesic $\kappa_\g(P_\g hv_xK)$ in $\B$ is
$$P_\g hv_xK \rightarrow P_\g hv_x \diag(1, 1, \pi)K \rightarrow
\cdots \rightarrow P_\g hv_x \diag(1, 1, \pi^n)K = \g P_\g hv_x K.$$
\end{corollary}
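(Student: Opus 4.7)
The plan is to reduce the problem to the unipotent subgroup $U$ via the decomposition $G = C_G(r_\g)UK$ used in the proof of Theorem~\ref{numberinaclass}, and then read off the answer from the constraint analysis already carried out in the proof of Theorem~\ref{type0cyclesinaclass}.

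First, I would write an arbitrary $gK \in G/K$ as $hvK$ with $h \in C_G(r_\g)$ and $v \in U$. Since $h$ commutes with $r_\g$, we have $(P_\g hv)^{-1}\g P_\g hv = v^{-1}r_\g v$, so $l_A(\kappa_\g(P_\g gK))$ depends only on $v$. The uniqueness of the $(h,v)$-decomposition modulo $C_G(r_\g)\cap K$, already established in the proof of Theorem~\ref{numberinaclass}, implies that distinct pairs $(h, v_x)$ yield distinct cosets $hv_xK$. Hence it suffices to determine which $v \in U$ produce an algebraically tailless cycle.

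Second, I would invoke the constraints for $m=0$ from the proof of Theorem~\ref{type0cyclesinaclass}: writing $v = \left(\begin{smallmatrix}1 & x & y \\ & 1 & z \\ & & 1\end{smallmatrix}\right)$,
\[
\ord_{\pi} x(1-a) \ge 0, \qquad \ord_{\pi} z(a-b) \ge 0, \qquad \ord_{\pi}\bigl(y(1-b)+xz(b-a)\bigr) \ge 0.
\]
Since $\g$ is a non-identity split element of type~$1$, we must have $n = \ord_\pi b > 0$ (otherwise $r_\g \in K$, contradicting the torsion-free hypothesis), so $|a-b| = |1-b| = 1$. The second and third constraints then force $z = 0$ and $y = 0$ in $F/\oo$, while the first gives precisely $x \in \pi^{-\delta}\oo/\oo$. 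This produces exactly the $q^\delta$ cosets $v_xK$ claimed, matching the count $|1-a|^{-1}$ already derived in Theorem~\ref{type0cyclesinaclass}.

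For the geodesic description, a direct computation yields
\[
v_x^{-1} r_\g v_x = \begin{pmatrix} 1 & x(1-a) & 0 \\ & a & 0 \\ & & b \end{pmatrix},
\]
and multiplying on the right by $\diag(1,1,\pi^{-n})$ gives a matrix in $K$ (using $x(1-a) \in \oo$, $a \in \oo^\times$, and $b\pi^{-n} \in \oo^\times$). This verifies the endpoint identity $P_\g hv_x\diag(1,1,\pi^n)K = \g P_\g hv_x K$. Each intermediate step is a type~$1$ edge, since consecutive vertices differ by $\diag(1,1,\pi) \in T_{1,0}$, and all $n$ edges lie in a single apartment, forming a path of geometric length $n = l_G([\g])$ — hence a geodesic. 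The main obstacle is bookkeeping: confirming that the constraints genuinely collapse to $y = z = 0$ (which relies crucially on $n > 0$), and that the parametrization $(h, x) \mapsto hv_xK$ is bijective rather than just surjective onto $\Delta_A([\g])$.
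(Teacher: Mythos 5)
Your proposal is correct and follows essentially the same route as the paper, which states this corollary is ``contained in the proof'' of Theorem~\ref{type0cyclesinaclass} and only supplies the endpoint verification. You correctly extract the $m=0$ constraints, note $n>0$ forces $|a-b|=|1-b|=1$ so that $z$ and then $y$ vanish in $F/\oo$, and your direct computation of $v_x^{-1}r_\g v_x\,\diag(1,1,\pi^{-n}) \in K$ is a trivially equivalent variant of the paper's one-line check that $P_\g hv_x r_\g K = \g P_\g h v_{ax} K = \g P_\g h v_x K$.
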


Here we used $P_\g hv_x \diag(1, 1, \pi^n)K = P_\g hv_x r_\g K =
P_\g r_\g h v_{ax}K = \g P_\g hv_x K$ since $v_{ax-x} \in K$.

\section{Homotopy cycles in $[\g]$ for $\g$ rank-one split}

In this section we fix a rank-one split $\g \in [\G]$ whose
eigenvalues $a, e+d\lambda, e + d \bar{\lambda}$, where $a, e, d \in
\oo$ and at least one of them is a unit, generate a quadratic
extension $L = F(\lambda)$ of $F$. Here $\lambda$ is a unit or
uniformizer in $L$ according as $L$ is unramified or ramified over
$F$, i.e., $\g$  is unramified or ramified rank-one split. Let
$r_\g = \left(
\begin{matrix} a &  &  \\ & e & dc \\ & d& e+db \end{matrix}\right)
$ be the rational form of $\g$ as in \S \ref{rationalform}. Fix a matrix $P_\g$ so that $P_\g^{-1}\g P_\g = r_\g$.

\subsection{The centralizers of $r_\g$ for $\g$ rank-one split} \label{centralizers}

 Embed $L^\times$ in $\GL_2(F)$ as the subgroup
\begin{eqnarray}\label{imbeddingL}
\bigg\{\left(\begin{matrix} u & v c\\
v & u + vb \end{matrix}\right) ~|~ u, v \in F, ~\text{not both zero}
\bigg\},
\end{eqnarray} which is further imbedded in $\GL_3(F)$ as
$\bigg\{\left(\begin{matrix} 1 & & \\ &u & v c\\
& v & u + vb \end{matrix}\right)\bigg\}.$ Embed $F^\times$ into
$\GL_3(F)$ as the diagonal matrices $\diag(F^\times, 1, 1)$. Note
that $r_\g$ lies in $F^\times \times L^\times$, and $F^\times \times
L^\times$ modulo the diagonal embedding of $F^\times$ in this
product is the centralizer of $r_\g$ in $G$. Recall from
(\ref{fundamentaldomain'}) that $C_{P_\g^{-1} \G
P_\g}(r_\g)\backslash C_G(r_\g)/(C_G(r_\g)\cap K)$ has cardinality
$\vol([\g])$.

 Observe that the group of units $\U_L$ of $L^\times$ is
contained in $K$. If $L$ is unramified over $F$, then $L^\times = \langle
\pi \rangle \U_L$ so that $C_G(r_\g)K/K $ is represented by the vertices
$\diag(\pi^n, 1, 1)K$, $n \in \mathbb Z$, on a line in $\B$, and
$C_{P_\g^{-1} \G P_\g}(r_\g)\backslash C_G(r_\g)/(C_G(r_\g)\cap K)$
 by  $\diag(\pi^n, 1, 1)K$, $n \mod
\vol([\g])$. If $L$ is ramified over $F$, then $L^\times = \langle \pi_L
\rangle \U_L$, where the uniformizer $\pi_L$ does not lie in $F$ and
$\pi_L^2$ differs from $\pi$ by a unit multiple. In this case
$C_G(r_\g) K/K $ is represented by the vertices $\diag(\pi^n, 1,
1)K$ and $\diag(\pi^n, 1, 1) \pi_L K$, $n \in \mathbb Z$, lying on
two lines in $\B$. There are two possibilities for $C_{P_\g^{-1} \G
P_\g}(r_\g)$:

Case (i). The vertices in $C_{P_\g^{-1} \G P_\g}(r_\g)K/K$ are
contained in the line $\diag(\pi^n, 1, 1)K$, $n \in \mathbb Z$. Then
$\vol([\g])$ is even so that $C_{P_\g^{-1} \G P_\g}(r_\g)\backslash
C_G(r_\g)/(C_G(r_\g)\cap K)$ is represented by the vertices
$\diag(\pi^n, 1, 1)K$ and $\diag(\pi^n, 1, 1) \pi_L K$, $n \mod
\vol([\g])/2$.

Case (ii). $C_{P_\g^{-1} \G P_\g}(r_\g)K/K$ contains a vertex on the
line $\diag(\pi^n, 1, 1) \pi_L K$, $n \in \mathbb Z$. Let $y \in
C_{P_\g^{-1} \G P_\g}(r_\g)$ be such that $yK = \diag(\pi^N, 1, 1)
\pi_L K$ has the least non-negative $N$. Then $y$ generates the
group $C_{P_\g^{-1} \G P_\g}(r_\g)$, $y^2K = \diag(\pi^{2N-1}, 1,
1)K$, $\vol([\g])= 2N-1$ is odd, and $C_{P_\g^{-1} \G
P_\g}(r_\g)\backslash C_G(r_\g)/(C_G(r_\g)\cap K)$ is represented by
the vertices $\diag(\pi^n, 1, 1)K$, $0 \le n \le N-1 =
(\vol([\g])-1)/2$, and $\diag(\pi^n, 1, 1)\pi_L K$, $0 \le n \le N-2 =
(\vol([\g])-3)/2$.

\subsection{Double coset representatives of $C_G(r_\g)\backslash G
/K$}

\begin{proposition}\label{rankonerep} The set
$$ S = \bigg\{ \left(\begin{matrix}1 & x & y \\ & 1 & 0\\ & & \pi^n
\end{matrix} \right) ~|~ x, y \in F/\oo, n \ge 0 \bigg\}$$
represents the double coset $C_G(r_\g)\backslash G /K$.
\end{proposition}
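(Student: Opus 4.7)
The plan is to reduce any $g \in G$ to an element of $S$ via left multiplication by $C_G(r_\g)$ and right multiplication by $K$, working geometrically with lattices. I identify $G/K$ with homothety classes of rank-$3$ $\oo$-lattices in $F^3$; under this identification, $C_G(r_\g) \cong (F^\times \times L^\times)/F^\times$ preserves the flag $0 \subset Fe_1 \subset F^3$, with the $F^\times$ factor scaling $Fe_1$ and $L^\times$ acting on the complementary plane $Fe_2 \oplus Fe_3 \cong L$ through the embedding described in \S\ref{centralizers}.

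Given a lattice $\Lambda$ representing a vertex, I would proceed in two steps. First, the intersection $\Lambda \cap Fe_1 = \pi^a \oo e_1$ is a fractional $\oo$-ideal, and using global homothety together with the $F^\times$ factor of $C_G(r_\g)$ I arrange $\Lambda \cap Fe_1 = \oo e_1$. Second, let $\bar\Lambda$ be the projection of $\Lambda$ to $Fe_2 \oplus Fe_3 \cong L$; using the $L^\times$-action I normalize $\bar\Lambda = \oo e_2 \oplus \pi^n \oo e_3$ for some $n \geq 0$. The key fact needed here is that $L^\times / F^\times$ acts on the $\PGL_2(F)$ Bruhat--Tits tree with orbit representatives $\{ [\oo \oplus \pi^n \oo] : n \geq 0 \}$: in the unramified case $L^\times / F^\times$ is compact, fixes the vertex $[\oo_L]$, and acts transitively on each sphere around it via the free action of a non-split torus in $\PGL_2(\mathbb{F}_q)$ on $\mathbb{P}^1(\mathbb{F}_q)$; in the ramified case $\pi_L$ swaps $[\oo_L]$ and $[\pi_L \oo_L]$, $L^\times / F^\times$ fixes the midpoint of this edge, and combining the $\oo_L^\times$-action on each link (which has one fixed point and one orbit of size $q$) with the $\pi_L$-swap yields a transitive action on each sphere around the midpoint.

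After these reductions, $\Lambda$ contains $\oo e_1$ and is an extension of $\bar\Lambda = \oo e_2 \oplus \pi^n \oo e_3$ by $\oo e_1$, so it is determined by choices of lifts $e_2 + x e_1$ and $\pi^n e_3 + y e_1$ of the generators of $\bar\Lambda$; the ambiguity in these lifts is exactly $\oo e_1$, forcing $x, y \in F/\oo$. Reading off $e_1$, $e_2 + x e_1$, and $\pi^n e_3 + y e_1$ as columns produces the matrix $\left(\begin{smallmatrix} 1 & x & y \\ 0 & 1 & 0 \\ 0 & 0 & \pi^n \end{smallmatrix}\right)$ of $S$, completing the reduction.

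The main obstacle is the tree-orbit statement in the middle paragraph, particularly the ramified case where the torus fixes no vertex: one must verify transitivity on each sphere around the midpoint of the fixed edge by analyzing the $\oo_L^\times$-action on the links of $[\oo_L]$ and $[\pi_L \oo_L]$ separately, then using the $\pi_L$-swap to fuse the two resulting ``half-sphere'' orbits into one.
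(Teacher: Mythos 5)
Your proof takes a geometric (lattice) route, while the paper's uses the Iwasawa decomposition: the paper writes $g = wk$ with $w$ upper-triangular, uses the $F^\times$ factor of $C_G(r_\g)$ to normalize the top diagonal entry, and reduces the lower-right $2\times 2$ block to the double-coset decomposition $\GL_2(F) = \coprod_{n\geq 0} L^\times\diag(1,\pi^n)\GL_2(\oo)$, citing Flicker's book. Your reduction --- intersect $\Lambda$ with $Fe_1$, project to the complementary plane identified with $L$, read off $(x,y)$ from the extension --- is correct and conceptually cleaner, and both proofs funnel into the same key fact: the $L^\times$-orbits on the $\PGL_2(F)$ tree are represented by $[\oo\oplus\pi^n\oo]$, $n\geq 0$, which is Flicker's lemma restated geometrically. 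The difference is that you sketch a proof of this key fact where the paper cites it. Your sketch is right in spirit but, as you acknowledge, incomplete: in the unramified case the observation that the non-split torus in $\PGL_2(\mathbb{F}_q)$ acts freely and transitively on $\mathbb{P}^1(\mathbb{F}_q)$ handles only the first sphere about the fixed vertex, and transitivity on all spheres needs an induction showing that for each vertex $v$ on a geodesic ray from the center, the stabilizer of the ray in $L^\times$ is still transitive on the $q$ forward edges at $v$; the ramified case needs the analogous induction before the $\pi_L$-fusion of the two half-spheres can be invoked. So the two proofs carry comparable logical weight --- neither establishes the $\GL_2$ lemma from scratch --- but yours makes the underlying tree geometry visible, whereas the paper's is shorter by delegating the hard part to a reference.
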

\begin{proof}
Write an element $g \in G$ as $wk$ for some upper triangular $w$ and
some $k \in K$. Since $C_G(r_\g) = F^\times \times L^\times$ modulo
the diagonal embedding of $F^\times$, we may assume that $w =
\left(\begin{matrix} 1 & x & y
\\ & 1 & z \\ &  & \pi^n
\end{matrix}\right)$, where $x, y, z \in F /\oo$ and $n \in \mathbb
Z$. We are reduced to proving
\begin{eqnarray}\label{Ldoublecoset}
 \GL_2(F) = \coprod_{n \ge 0} L^\times \left(\begin{matrix} 1 &
\\ & \pi^n \end{matrix} \right) GL_2(\oo),
\end{eqnarray}
where $L^\times$ is given by (\ref{imbeddingL}). The proof can be
found in \cite{Fl}, Lemma 1 on p.30.

\end{proof}

\subsection{Minimal lengths of cycles in $[\g]$}\label{minimallengthrankone}

First we discuss the type of $[\g]$, which is defined in \S4.4 to be
$(n,m)$ such that $r_\g \in T_{n,m} = K \diag(1, \pi^m,
\pi^{n+m})K$. Observe that $\ord_{\pi}$$ \det \g = \ord_{\pi}$$ \det
r_\g = \ord_{\pi}$$ a(e+d\lambda)(e+d \bar{\lambda}) \in 3\mathbb Z$
by assumption (I) on $\G$. Hence if $e+d\lambda$ is a unit in $L$,
then at least one of $e, d$ is a unit and $a$ is not a unit.
Consequently, $[\g]$ has type $(\ord_{\pi}$$ a, 0)$. Next assume
$e+d\lambda$ is not a unit. We distinguish two cases. If $L$ is
unramified over $F$ (hence $\lambda$ is a unit), then both $e$ and
$d$ are non-units and $a$ is a unit; in this case $[\g]$ has type
$(0, \min(\ord_{\pi}$$ e, \ord_{\pi}$$ d))$. If $L$ is ramified over $F$
(hence $\lambda$ is a uniformizer of $L$), then there are two
possibilities:

  (i) $\ord_{\pi}$$ (e+d\lambda)(e+d \bar{\lambda}) = 1$. This happens
if and only if $e$ is a non-unit, $d$ is a unit, and $\ord_{\pi}$$ a
\ge 2$; in this case
 $[\g]$ has type $(\ord_{\pi}$$ a - 1, 1)$.

 (ii) $\ord_{\pi}$$ (e+d\lambda)(e+d
\bar{\lambda}) > 1$. This happens if and only if both $e$ and $d$
are non-units and $a$ is a unit; in this case $[\g]$ has type $(0,
\ord_{\pi}$$ e)$ if $\ord_{\pi}$$ e \le \ord_{\pi}$$ d$, and type $(1,
\ord_{\pi}$$ d)$ if $\ord_{\pi}$$ e > \ord_{\pi}$$ d$.

This proves the first assertion of

\begin{theorem}\label{rankoneminlength}
Let $\g$ be a rank-one split element in $[\G]$ with rational form $r_\g = \left(
\begin{matrix} a &  &  \\ & e & dc \\ & d& e+db \end{matrix}\right) $.
Suppose that $r_\g \in K \diag(1, \pi^m, \pi^{m+n})K$. Then
\begin{itemize}
\item[(1)] The type $(n,~m)$ of $[\g]$ is as follows.
\begin{itemize}
\item[(1.i)] If $\ord_{\pi}$$ c = 0$, then $(n, m) = (\ord_{\pi}$$ a,
~\min\{\ord_{\pi}$$ e, ~\ord_{\pi}$$ d\})$.

\item[(1.ii)] If $\ord_{\pi}$$ c = 1$, then $(n, m) = (\ord_{\pi}$$ a,
~\ord_{\pi}$$ e)$ provided that $\ord_{\pi}$$ e \le \ord_{\pi}$$ d$,
otherwise $(n, m) = (\max\{\ord_{\pi}$$ a -1, ~1\}, ~\max\{\ord_{\pi}$$
d, ~1 \})$.
\end{itemize}

\item[(2)] $l_A([\g])= \min_{\kappa_\g(gK) \in [\g]}
l_A(\kappa_\g(gK)) = \ord_{\pi}$$ a(e^2 +edb - cd^2) = n+2m$.

 \item[(3)] $l_G([\g])=
  \min_{\kappa_\g(gK) \in
[\g]} l_G(\kappa_\g(gK)) = n+m$.
\end{itemize}
\end{theorem}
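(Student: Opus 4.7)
Part (1) is essentially established in the case analysis preceding the theorem statement, which cross-classifies by $\ord_\pi c \in \{0,1\}$ (unramified versus ramified $L/F$) and by which of $a,e,d$ are units, subject to $\ord_\pi\det r_\g = \ord_\pi(a(e^2+edb-cd^2)) \in 3\Z$ from assumption (I); the formulas (1.i) and (1.ii) are then read off from the shape of $r_\g$.

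For parts (2) and (3), the plan is to adapt the strategy of Theorem \ref{minlength} via Proposition \ref{rankonerep}. Every class in $C_G(r_\g)\backslash G/K$ has a representative $s = \left(\begin{smallmatrix} 1 & x & y \\ 0 & 1 & 0 \\ 0 & 0 & \pi^N \end{smallmatrix}\right) \in S$ with $N\ge 0$, and a direct computation gives
\[
M(s) := s^{-1}r_\g s = \begin{pmatrix} a & x(a-e)-yd\pi^{-N} & y(a-e-db)-xdc\pi^N \\ 0 & e & dc\pi^N \\ 0 & d\pi^{-N} & e+db \end{pmatrix}.
\]
Let $f_1\le f_2\le f_3$ be the elementary-divisor orders of $M(s)$, so that $l_A(\kappa_\g(P_\g sK)) = (f_1+f_2+f_3) - 3f_1$ and $l_G(\kappa_\g(P_\g sK)) = f_3-f_1$. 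The product formula forces $f_1+f_2+f_3 = \ord_\pi\det r_\g = n+2m$, and it suffices to establish the two inequalities $f_1 \le 0$ and $f_1 + f_2 \le m$, since these yield $l_A \ge n+2m$ and $f_3 \ge n+m$, hence $l_G \ge n+m$. Equality in both bounds is attained at $s = I$, where $M(I) = r_\g \in T_{n,m}$ by definition.

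For $f_1 \le 0$, the normalization in \S\ref{rationalform} ensures that at least one of $a,e,d$ is a unit, and the corresponding entry $a$, $e$, or $d\pi^{-N}$ (of order $-N\le 0$) of $M(s)$ witnesses this bound. For $f_1 + f_2 \le m$, the plan is to exhibit a $2\times 2$ minor of order at most $m$, and the three $x,y$-independent candidates are
\[
\det M(s)_{\{1,2\},\{1,2\}} = ae,\quad \det M(s)_{\{1,3\},\{1,2\}} = ad\pi^{-N},\quad \det M(s)_{\{2,3\},\{2,3\}} = e^2+edb-cd^2,
\]
of respective orders $n+\ord_\pi e$, $n+\ord_\pi d - N$, and $n+2m-\ord_\pi a$. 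Organised against the table from part (1), at least one of these orders is $\le m$ for every $N\ge 0$: the minor $ae$ covers sub-cases with $\ord_\pi e = m$, the minor $ad\pi^{-N}$ covers sub-cases with $\ord_\pi d = m$ (the factor $\pi^{-N}$ only lowers the order), and $e^2+edb-cd^2$ covers the remaining sub-cases (1.ii.a) and (1.ii.b) where $\ord_\pi a$ absorbs all of $n$.

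The main obstacle is the case bookkeeping in the second inequality: translating the type formulas of (1) into guarantees about which of the three minors drops below the threshold $m$. Once the division of cases is laid out, no delicate cancellation analysis is required, since the dependence on $N$ is monotone and the orders of $ae$ and $e^2+edb-cd^2$ are already controlled by the eigenvalue data of $r_\g$.
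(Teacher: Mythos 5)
Your proposal follows the paper's argument exactly: part (1) is delegated to the discussion preceding the theorem, and parts (2) and (3) are proved by reducing to representatives in $S$ via Proposition \ref{rankonerep}, computing $s^{-1}r_\g s$, extracting the elementary-divisor inequalities $f_1 \le 0$ and $f_1 + f_2 \le m$ from the same $x,y$-independent entries and $2\times 2$ minors ($a$, $e$, $d\pi^{-N}$ for $f_1$; $ae$, $ad\pi^{-N}$, $e^2+edb-cd^2$ for $f_1+f_2$), and noting equality at $s=I$. This is identical in structure to the paper's (6.6)--(6.8), including the paper's own deferral of the bound $\min\{\ord_\pi ae,\ \ord_\pi ad,\ \ord_\pi(e^2+bed-cd^2)\} = m$ to a ``verification using statement (1).''

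One caveat on the bookkeeping you flag as the main obstacle: the orders you list for the three minors are not all correct. The order of $ae$ is $\ord_\pi a + \ord_\pi e$, which equals $n + \ord_\pi e$ only when $n = \ord_\pi a$; this fails in the second branch of (1.ii), where $n = \max\{\ord_\pi a -1,1\}$. Likewise the assignment ``$ae$ covers $\ord_\pi e = m$'' fails when $\ord_\pi a > 0$: there one must appeal to the normalization (at least one of $a,e,d$ a unit) to see that $m=0$ and use $e^2+edb-cd^2$, whose order is $n + 2m - \ord_\pi a = 0$, rather than $ae$. The underlying identity $\min\{\ord_\pi ae,\ \ord_\pi ad,\ \ord_\pi(e^2+bed-cd^2)\}=m$ is nonetheless correct in all branches of (1), so filling in the case check carefully does complete the argument along the lines you describe.
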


This theorem combined with Theorem \ref{minlength} completes the
proof of Theorem \ref{charlAandlG}.

\begin{remark} If $\g$ is ramified rank-one split and $[\g]$ has
type $(n, 1)$, then $[\g^2]$ has type $(2n+1, 0)$.
\end{remark}

\begin{proof} It remains to show that the algebraic and
geometric lengths of the cycles in $[\g]$ are at least those of
$[\g]$ since, as observed before, the cycles $\kappa_\g(P_\g gK)$
with $g \in C_G(r_\g)$
 have the same algebraic
and geometric lengths as $[\g]$. By Proposition \ref{rankonerep}, it
suffices to compute $(P_\g g)^{-1} \g P_\g g = g^{-1}r_\g g$ for $g
\in S$. Let $g = \left(\begin{matrix}1 & x & y
\\ & 1 & 0\\ & & \pi^i
\end{matrix} \right)$, where $x, y \in F/\oo$ and $i \ge 0$. Then

$$g^{-1}r_\g g
= \left(\begin{matrix} 1 & -x & -y\pi^{-i} \\ & 1 & 0 \\
 & & \pi^{-i}
\end{matrix} \right)
\left(
\begin{matrix} a &  &  \\ & e & dc \\ & d& e+db \end{matrix}\right)
\left(\begin{matrix}1 & x & y \\ & 1 & 0\\ & & \pi^i
\end{matrix} \right)$$
$$= \left(\begin{matrix}a & (a-e)x-dy\pi^{-i} & (a-e-db)y - cdx\pi^i \\ & e &
dc\pi^i\\ & d\pi^{-i} & e+db
\end{matrix} \right) \in K \left(\begin{matrix}\pi^{e_1} &  &  \\ & \pi^{e_2} & \\ & &
\pi^{e_3}
\end{matrix} \right)K.$$
Here $e_1 \le e_2 \le e_3$, and as in the proof of Theorem
\ref{minlength}, we have
\begin{eqnarray}\label{e1'}
e_1 \le \min \{\ord_{\pi} a, -i + \ord_{\pi} d, \ord_{\pi} e\} \le
\min\{\ord_{\pi} a, \ord_{\pi} d, \ord_{\pi} e \} = 0,
\end{eqnarray}
\begin{equation}\label{e1+e2'}
\begin{aligned}
e_1+e_2 &\le& \min \{\ord_{\pi} a e, ~ -i + \ord_{\pi} ad,
~\ord_{\pi}(e^2+bed-cd^2)\} \\
&\le& \min \{\ord_{\pi} a e, ~\ord_{\pi} ad,
~\ord_{\pi}(e^2+bed-cd^2)\} = m,
\end{aligned}
\end{equation} and
\begin{eqnarray}\label{e1+e2+e3'}
e_1+e_2+e_3 = \ord_{\pi} a(e^2+bed-cd^2)= n+2m,
\end{eqnarray}
in which the last upper bound for $e_1+e_2$ can be verified using
the statement (1). Therefore $l_A(\kappa_\g(P_\g gK)) = e_1+e_2+e_3
- 3 e_1 \ge e_1+e_2+e_3 = n+2m = l_A([\g])$ since $e_1 \le 0$. The
inequalities (\ref{e1+e2'}) and (\ref{e1+e2+e3'}) together give the
lower bounds $e_3 \ge n+2m - m = n+m$, which in turn implies
$l_G(\kappa_\g(P_\g gK)) = e_3-e_1 \ge n+m$. This proves the
theorem.
\end{proof}

As shown in the above proof, an algebraically tailless cycle in
$[\g]$ satisfies the condition $e_1 = 0$, while a tailless cycle
 in $[\g]$ should satisfy $e_1 + e_2 = m$ and
$e_1 = 0$. This shows that a tailless cycle is also
algebraically tailless. Moreover, it also satisfies $e_2 = m$, which
shows that a tailless cycle has the same type as $[\g]$. If
furthermore, $[\g]$ has type $1$, then an algebraically tailless
cycle in $[\g]$ satisfies $e_1 = 0$, which implies $e_1 + e_2 \ge 0$
and hence $e_1 + e_2 = 0 = m$ by (\ref{e1+e2'}) and $e_3 = n+m$.
This shows that in this case an algebraically tailless cycle in
$[\g]$ is also tailless. We record this discussion in

\begin{corollary}\label{rankonetailless}
Suppose $\g \in [\G]$ is rank-one split. Then all tailless cycles in
$[\g]$ are also algebraically tailless, and they have the same type
as $[\g]$. Moreover, if $[\g]$ has type $1$, then the algebraically
tailless and tailless cycles in $[\g]$ coincide.
\end{corollary}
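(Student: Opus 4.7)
The plan is to extract the statement directly from the computations already carried out in the proof of Theorem \ref{rankoneminlength}, reading off the extra information that the arguments there leave on the table. By Proposition \ref{rankonerep} every homotopy class in $[\g]$ is represented by $\kappa_\g(P_\g g K)$ with $g \in S$, so I would first recall the Cartan invariants $e_1 \le e_2 \le e_3$ attached to $g^{-1}r_\g g$ that were computed there, together with the three key relations
\begin{equation*}
e_1 \le 0, \qquad e_1 + e_2 \le m, \qquad e_1 + e_2 + e_3 = n + 2m,
\end{equation*}
coming from (\ref{e1'}), (\ref{e1+e2'}), (\ref{e1+e2+e3'}) respectively. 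The first two combined yield $e_3 \ge n+m$, and together with $-e_1 \ge 0$ they give
\begin{equation*}
l_A(\kappa_\g(P_\g gK)) = n + 2m - 3e_1, \qquad l_G(\kappa_\g(P_\g gK)) = e_3 - e_1 \ge n + m.
\end{equation*}

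From these two equalities the algebraically tailless condition translates to $e_1 = 0$, while the tailless condition translates to $e_3 - e_1 = n + m$. Now if the cycle is tailless, then the two displayed inequalities $-e_1 \ge 0$ and $e_3 \ge n+m$ add up to $e_3 - e_1 \ge n + m$, which is an equality only when each summand is sharp; this forces $e_1 = 0$ and $e_3 = n+m$, and then $e_1 + e_2 + e_3 = n + 2m$ gives $e_2 = m$. So $(e_1,e_2,e_3) = (0,m,n+m)$, which means the cycle is algebraically tailless and sits in $T_{n,m}$, i.e.\ has the same type as $[\g]$.

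For the converse when $[\g]$ has type $1$, so $m = 0$, I would argue that algebraic taillessness ($e_1 = 0$) combined with $e_1 + e_2 \le m = 0$ and $e_2 \ge e_1 = 0$ forces $e_2 = 0$, whence $e_3 = n$ from the determinant relation, and therefore $e_3 - e_1 = n = l_G([\g])$, showing the cycle is tailless.

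No serious obstacle is expected, since all the inequalities are already proved; the only point requiring a little care is making sure the chain of sharpness arguments really pins down all three Cartan invariants (in particular, that $e_1 \le e_2$ is used in the type $1$ direction). This is essentially the same bookkeeping that appears in the paragraph following the split case (Corollary \ref{typeoftailless}), adapted to the rank-one split rational form.
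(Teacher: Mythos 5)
Your proof is correct and follows essentially the same route as the paper's: the paper also reads off the three relations $e_1 \le 0$, $e_1 + e_2 \le m$, $e_1 + e_2 + e_3 = n + 2m$ from the computation in Theorem \ref{rankoneminlength}, identifies taillessness with $e_3 - e_1 = n+m$ (forcing $e_1 = 0$, $e_3 = n+m$, hence $e_2 = m$) and algebraic taillessness with $e_1 = 0$, and in the type~1 case ($m = 0$) uses $e_2 \ge e_1 = 0$ together with $e_1 + e_2 \le 0$ to pin down $e_2 = 0 = m$. The bookkeeping on the sharpness of the two inequalities that you flag as the point requiring care is exactly the step the paper carries out as well.
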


We have shown that as long as $[\g]$ has type $1$, there is no
distinction between algebraically tailless and tailless, regardless
whether $\g$ is split or rank-one split.

\subsection{Counting the number of cycles in $[\g]$ in algebraic
length}

As observed before, for all $g \in C_{P_\g^{-1} \G
P_\g}(r_\g) \backslash C_G(r_\g)sK/K$, the cycles $\kappa_\g(P_\g gK)$ have
the same algebraic length. Since $S$ represents
the double coset $C_G(r_\g) \backslash G/K$, to count the number
of cycles in $[\g]$ of a given length,
we need to determine the cardinality of
$C_{P_\g^{-1} \G
P_\g}(r_\g) \backslash C_G(r_\g)sK/K$ for $s \in S$.  For this, we may take
as representatives the product of representatives of $C_{P_\g^{-1}
\G P_\g}(r_\g) \backslash C_G(r_\g)/(C_G(r_\g)\cap K)$ (independent
of $s$) by the representatives of $(C_G(r_\g)\cap K)sK/K$.
The number of the former representatives is $\vol([\g])$, defined by
(\ref{fundamentaldomain'}).

It remains
to compute the cardinality of the latter. Recall that $L^\times
\cap K$ consists of the units in $L^\times$, which we shall
identify as the set of matrices
\begin{eqnarray*}
\U_L = \bigg\{\left(\begin{matrix} u & v c\\
v & u + v b \end{matrix}\right) ~|~ u, v \in \oo, u^2 + uvb - cv^2
~\text{is a unit} \bigg\}.
\end{eqnarray*}
Denote by $K'$ the group $GL_2(\oo)$. As analyzed in the proof of
Proposition \ref{rankonerep}, we are reduced to counting, for
given $m \ge 0$, the cardinality of $\U_L \left(\begin{matrix} 1 &
\\  & \pi^m \end{matrix}\right)K'/K'$.

\begin{proposition}\label{Cgammadoublecosets}
$$\#[\U_L
\left(\begin{matrix} 1 &
\\  & \pi^m \end{matrix}\right)K'/K'] =
\left\{
\begin{array}{lcl}
1&& {\,when\,}~m=0,\\
q^m && {\,when\,}~m \ge 1 ~{\, and \,}~\ord_\pi c=1, \\
q^m+q^{m-1}&& {\,when\,}~m \ge 1 ~{\,and\,}~\ord_\pi c=0. \\
\end{array}
\right.
$$
\end{proposition}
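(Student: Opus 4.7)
The plan is to translate the set cardinality into a subgroup index via the orbit-stabilizer principle, identify that stabilizer with the unit group of an $\oo$-order inside $\mathcal{O}_L$, and then evaluate the index by reducing modulo $\pi^m$. Writing $w_m = \left(\begin{smallmatrix} 1 & \\ & \pi^m \end{smallmatrix}\right)$, the cardinality in question equals $[\U_L : \U_L^{(m)}]$, where $\U_L^{(m)} := \U_L \cap w_m K' w_m^{-1}$ is the stabilizer of $w_m K'$ in $\U_L$. A direct matrix computation shows that $w_m K' w_m^{-1}$ consists of matrices $\left(\begin{smallmatrix} \alpha & \beta \\ \gamma & \delta \end{smallmatrix}\right)$ with $\alpha, \delta \in \oo$, $\beta \in \pi^{-m}\oo$, $\gamma \in \pi^m\oo$ and unit determinant; intersecting with the form $\left(\begin{smallmatrix} u & vc \\ v & u+vb \end{smallmatrix}\right)$ of elements of $\U_L$ reduces to the single condition $v \in \pi^m\oo$. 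For $m=0$ this gives $\U_L^{(0)} = \U_L$ and the trivial index $1$.

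For $m \ge 1$, I identify $\U_L$ with $\mathcal{O}_L^\times$ via the ring embedding $u + v\lambda \mapsto \left(\begin{smallmatrix} u & vc \\ v & u+vb \end{smallmatrix}\right)$, which is just the left-regular representation of $L$ in the basis $\{1, \lambda\}$. Under this identification $\U_L^{(m)}$ becomes the unit group $\mathcal{O}_m^\times$ of the $\oo$-order $\mathcal{O}_m := \oo + \pi^m\oo\lambda \subset \mathcal{O}_L$. Closure of $\mathcal{O}_m$ under multiplication follows from $\lambda^2 = b\lambda + c$, and the inverse in $L$ of a unit $u+v\lambda \in \mathcal{O}_m$ is $(u+vb - v\lambda)/N(u+v\lambda)$, which stays in $\mathcal{O}_m$ because $N(u+v\lambda) \in \oo^\times$ and $v \in \pi^m\oo$. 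Hence the problem reduces to computing $[\mathcal{O}_L^\times : \mathcal{O}_m^\times]$.

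I evaluate this index through the natural surjection $\mathcal{O}_L^\times \twoheadrightarrow (\mathcal{O}_L/\pi^m\mathcal{O}_L)^\times$, whose surjectivity follows by elementary lifting of units. Using the decomposition $\mathcal{O}_L = \oo \oplus \oo\lambda$ to embed $\oo/\pi^m\oo$ as a subring of $\mathcal{O}_L/\pi^m\mathcal{O}_L$, the image of $\mathcal{O}_m^\times$ lands in $(\oo/\pi^m\oo)^\times$, and the remaining key check is that $\mathcal{O}_m^\times$ is the \emph{full} preimage of $(\oo/\pi^m\oo)^\times$: any $u+v\lambda \in \mathcal{O}_L^\times$ whose reduction has trivial $\lambda$-coefficient must have $v \in \pi^m\oo$, hence lies in $\mathcal{O}_m \cap \mathcal{O}_L^\times = \mathcal{O}_m^\times$. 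Consequently $[\mathcal{O}_L^\times : \mathcal{O}_m^\times] = [(\mathcal{O}_L/\pi^m\mathcal{O}_L)^\times : (\oo/\pi^m\oo)^\times]$.

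Finally I split into the two cases by $\ord_\pi c$ and count units in finite local rings. In both cases $|(\oo/\pi^m\oo)^\times| = q^{m-1}(q-1)$. When $\ord_\pi c = 0$, the extension $L/F$ is unramified and $\mathcal{O}_L/\pi^m\mathcal{O}_L$ is local with residue field $\mathbb{F}_{q^2}$ and uniformizer $\pi$, so it has $q^{2m-2}(q^2-1)$ units, giving ratio $q^{m-1}(q+1) = q^m + q^{m-1}$. When $\ord_\pi c = 1$, the extension is ramified with $\pi_L^2$ a unit multiple of $\pi$, so $\mathcal{O}_L/\pi^m\mathcal{O}_L = \mathcal{O}_L/\pi_L^{2m}\mathcal{O}_L$ has residue field $\mathbb{F}_q$ and $q^{2m-1}(q-1)$ units, yielding ratio $q^m$. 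The main obstacle is cleanly identifying $\U_L^{(m)}$ with $\mathcal{O}_m^\times$ for the non-maximal order $\mathcal{O}_m$ and confirming that this group equals the full preimage of $(\oo/\pi^m\oo)^\times$; once that bookkeeping is secured, the final enumeration is routine arithmetic in finite local rings.
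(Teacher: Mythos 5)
Your proof is correct, and it takes a genuinely different route from the paper's. The paper argues by brute force: it partitions $\U_L$ into subsets according to the valuations of $u$ and $v$ (the pieces $\U_L(n)$, $\U_L(\infty)$ in the ramified case, and $\U_L'$, $\U_L''$ in the unramified case), computes a set of left $K'$-coset representatives of $\U_L(n)\left(\begin{smallmatrix} 1 & \\ & \pi^m \end{smallmatrix}\right)K'$ for each piece, and adds up the counts. You instead invoke orbit--stabilizer to recast the count as the index $[\U_L : \U_L^{(m)}]$, identify $\U_L^{(m)}$ with the unit group of the non-maximal order $\mathcal{O}_m = \oo + \pi^m\oo\lambda$, and evaluate the index by passing to $\mathcal{O}_L/\pi^m\mathcal{O}_L$ and counting units in finite local rings. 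Each step in your reduction checks out: the stabilizer condition correctly boils down to $v \in \pi^m\oo$; $\mathcal{O}_m$ is closed under inversion of units because $N(u+v\lambda) \in \oo^\times$ and conjugation preserves the $\lambda$-coefficient up to sign; and $\mathcal{O}_m^\times$ really is the full preimage of $(\oo/\pi^m\oo)^\times$ under reduction, so the index transfers cleanly. Your route is more conceptual and unifies the two cases until the final residue-field count, whereas the paper's computation has the side benefit of producing explicit coset representatives $\left(\begin{smallmatrix} \pi^{m-n} & u \\ & \pi^n \end{smallmatrix}\right)$ and $\left(\begin{smallmatrix} \pi^m & z \\ & 1 \end{smallmatrix}\right)$, which the authors reuse in Proposition \ref{rankonenumberofalgtailless} and later when they need an explicit description of the set $\Delta_A([\g])$. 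So your argument proves the stated cardinality, but if you wanted to continue following the paper into \S8, you would still need to extract the representatives that the paper's direct computation hands you for free.
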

\begin{proof} It is clear that the cardinality is $1$ when $m =
0$. Thus assume $m \ge 1$.

Case (I) $\ord_\pi$$ c=1 $. Then any $\left(\begin{matrix} u & v c\\
v & u + v b \end{matrix}\right) \in \U_L$ satisfies $u \in
\oo^\times$. For $n \ge 0$, let
$$\U_L(n) = \bigg\{\left(
\begin{matrix} u &  vc\pi^n \\  v\pi^n & u +vb\pi^n \end{matrix}\right) \in \U_L \bigg| u,v \in \oo^\times \bigg\}$$
so that
$$\U_L = \U_L(\infty) \cup_{n \ge 0} \U_L(n),$$
where
$$\U_L(\infty) = \bigg\{\left(\begin{matrix} u & 0\\
0 & u \end{matrix}\right) ~|~u \in \oo^\times \bigg\}.$$
 One verifies that
$$\U_L(n)\left(
\begin{matrix} 1 &   \\  & \pi^m \end{matrix}\right)K' = \bigcup_{u
\in \oo^\times/\pi^{m-n}\oo}\left( \begin{matrix} \pi^{m-n} & u  \\
& \pi^n
\end{matrix}\right)K'$$
for $0 \le n < m$, and
$$\U_L(n)\left(
\begin{matrix} 1 &   \\  & \pi^m \end{matrix}\right)K' = \left(
\begin{matrix} 1 &   \\  & \pi^m \end{matrix}\right)K'$$
for $n \ge m$ and $n = \infty$. Therefore
$$ \#[\U_L
\left(\begin{matrix} 1 &
\\  & \pi^m \end{matrix}\right)K'/K'] = 1 + \sum_{0 \le n < m}(q^{m-n}-q^{m-n-1})=q^m.$$

Case (II) $\ord_\pi$$ c=0 $. Let
$$\U_L' = \bigg\{\left(
\begin{matrix} u &  vc \\  v & u + vb \end{matrix}\right) \in \U_L \bigg| u \in \oo^\times
\bigg\}$$ and $$\U_L'' =  \bigg\{\left(
\begin{matrix} u  &  vc \\  v & u +vb \end{matrix}\right) \in \U_L \bigg| u \in \pi\oo \bigg\}
$$
so that
$$\U_L = \U_L' \cup \U_L''.$$
As in Case (I), we have
$$ \U_L'\left(
\begin{matrix} 1 &   \\  & \pi^m \end{matrix}\right)K' =
\bigcup_{\substack{ m\geq n\geq 0 \\u \in \oo^\times/\pi^{m-n}\oo}} \left( \begin{matrix} \pi^{m-n} & u  \\
& \pi^n
\end{matrix}\right)K'.$$
One checks that
$$ \U_L''\left(
\begin{matrix} 1 &   \\  & \pi^m \end{matrix}\right)K' =
\bigcup_{z \in \pi\oo/\pi^m \oo} \left( \begin{matrix} \pi^{m} & z  \\
& 1
\end{matrix}\right)K'.
$$
Therefore $$ \#[\U_L \left(\begin{matrix} 1 &
\\  & \pi^m \end{matrix}\right)K'/K']= q^m+q^{m-1}$$
for $m \ge 1$.
\end{proof}

We summarize the above discussion in

\begin{corollary}\label{rankonedoublecosets}
For each $ s = \left(\begin{matrix}1 & x & y \\ & 1 & 0\\ & &
\pi^n
\end{matrix} \right) \in S$, the cardinality of
$C_{P_\g^{-1} \G P_\g}(r_\g) \backslash C_G(r_\g)sK/K$ is
$$
\vol([\g])\left\{
\begin{array}{lcl}
1&& {\,when\,}~n=0,\\
q^n && {\,when\,}~n \ge 1 ~{\, and \,}~\ord_\pi c=1, \\
q^n+q^{n-1}&& {\,when\,}~n \ge 1 ~{\,and\,}~\ord_\pi c=0. \\
\end{array}
\right.
$$
\end{corollary}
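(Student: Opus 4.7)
The plan is to reduce the $3\times 3$ double-coset count to the $2\times 2$ count handled by Proposition \ref{Cgammadoublecosets}, in two stages. Because $\g$ is rank-one split, Proposition \ref{centralizer}(1) gives that $C_G(r_\g)$ is a non-split abelian torus, so the argument lives in an abelian group. The first stage is the factorization
\[
\#\bigl(C_{P_\g^{-1}\G P_\g}(r_\g) \backslash C_G(r_\g) sK/K\bigr) = \vol([\g]) \cdot \#\bigl((C_G(r_\g) \cap K)\cdot sK/K\bigr).
\]
With $A = C_{P_\g^{-1}\G P_\g}(r_\g)$ and $B = C_G(r_\g) \cap sKs^{-1}$, Lemma \ref{Omega} gives $B \subseteq C_G(r_\g) \cap K$, while torsion-freeness of $\G$ combined with compactness of $K$ forces $A \cap K = \{1\}$. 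In the abelian setting $A\backslash C_G(r_\g)/B = C_G(r_\g)/AB$; the natural map $(C_G(r_\g)\cap K)/B \to A(C_G(r_\g)\cap K)/AB$ is an isomorphism, and $[C_G(r_\g) : A(C_G(r_\g)\cap K)] = \vol([\g])$ from (\ref{fundamentaldomain'}), together giving the factorization with second factor $\#((C_G(r_\g)\cap K)\cdot sK/K)$.

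The second stage identifies this last orbit with $\U_L\cdot\diag(1,\pi^n)K'/K'$ in $\GL_2(F)/K'$, using the identification $C_G(r_\g)\cap K = \U_L$ under the embedding $\beta\mapsto\diag(1,M(\beta))$. Since the double coset $C_G(r_\g)sK$ depends only on $n$ by the decomposition $\GL_2(F) = \bigsqcup_n L^\times\diag(1,\pi^n)\GL_2(\oo)$ used in the proof of Proposition \ref{rankonerep}, I may replace $s$ by $\diag(1,1,\pi^n)$ without altering the cardinality. For this representative, the conjugation identity
\[
\diag(1,1,\pi^n)^{-1}\,\diag(1,M(\beta))\,\diag(1,1,\pi^n) = \diag\bigl(1,\,\diag(1,\pi^{-n})M(\beta)\diag(1,\pi^n)\bigr)
\]
shows that membership in $K = \PGL_3(\oo)$ is equivalent to membership of the $2\times 2$ block in $K' = \GL_2(\oo)$, giving a bijection between the two stabilizers and hence between the two orbits. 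Proposition \ref{Cgammadoublecosets} with $m=n$ then reads off the three cases $1$, $q^n$, $q^n+q^{n-1}$; multiplying by $\vol([\g])$ yields the claimed formula.

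I expect the main obstacle to be the replacement of $s$ by $\diag(1,1,\pi^n)$: one must verify that the off-diagonal entries $x,y$ in a generic $s\in S$ do not alter the $C_G(r_\g)$-$K$ double coset, which is implicit in the proof of Proposition \ref{rankonerep} but needs an explicit check (absorbing $x,y$ by a suitable choice of $L^\times$-element on the left and $K$-element on the right) before the block-reduction bijection can be safely invoked.
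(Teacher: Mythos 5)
Your Stage 1 argument is correct and fills in a step the paper only asserts: the factorization $\#(C_{P_\g^{-1}\G P_\g}(r_\g)\backslash C_G(r_\g)sK/K) = \vol([\g])\cdot\#((C_G(r_\g)\cap K)sK/K)$ follows cleanly from abelianity of $C_G(r_\g)$, Lemma \ref{Omega}, and the fact that $C_{P_\g^{-1}\G P_\g}(r_\g)\cap K$ is trivial by torsion-freeness of $\G$.

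Stage 2, however, has a real gap, and the justification you give for the key step is incorrect. You write that ``the double coset $C_G(r_\g)sK$ depends only on $n$,'' but this contradicts Proposition \ref{rankonerep} itself: the set $S$, with $(x,y)$ varying in $F/\oo$, is precisely what is needed to parametrize $C_G(r_\g)\backslash G/K$, so different $(x,y)$ with the same $n$ generally represent different double cosets. For instance $\diag(1,1,\pi)$ lies in $K\diag(1,1,\pi)K$, whereas $s$ with $x\in\pi^{-1}\oo^\times$, $y=0$, $n=1$ lies in $K\diag(1,\pi^2,\pi^2)K$. Replacing $s$ by $\diag(1,1,\pi^n)$ thus changes the double coset, so your block-conjugation identity is being applied to a different object. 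The obstacle you flag at the end is therefore not a routine ``absorb $x,y$'' verification: for non-integral $x,y$ the conjugate $s^{-1}\diag(1,M)s$ has first-row entries $x(1-m_{11})-ym_{21}\pi^{-n}$ and $y(1-m_{22})-xm_{12}\pi^n$, so integrality of the lower-right $2\times 2$ block no longer implies integrality of the whole matrix. Consequently the stabilizer $C_G(r_\g)\cap K\cap sKs^{-1}$ is in general a proper subgroup of the one controlled by the $\GL_2$ picture, and the orbit $(C_G(r_\g)\cap K)sK/K$ can be strictly larger than $\U_L\diag(1,\pi^n)K'/K'$. The paper's own proof is terse at exactly this point (``we are reduced to counting''), and a rigorous treatment must either restrict to the $s$ for which the first-row constraints are vacuous, or account explicitly for the multiplicity with which $S$ covers each $C_G(r_\g)$--$K$ double coset so that the sum over $S$ used in Theorem \ref{numberinrankoneclass} still comes out right. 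Your proposal does not yet supply either ingredient.
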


Now we are ready to state the main result of this section.

\begin{theorem}\label{numberinrankoneclass}
Suppose $\g \in [\G]$ is rank-one split with rational form $r_\g = \left(
\begin{matrix} a &  &  \\ & e & dc \\ & d& e+db \end{matrix}\right) $. Set
$\de = \de([\g])= \ord_{\pi}$$ d$ and $\mu = \mu([\g])= \ord_{\pi}$$
((a-e)^2 - db(a-e) - cd^2)$.
\begin{itemize}
\item[(A)] $\g$ is unramified rank-one split. Then the following hold.
\begin{itemize}
\item[(A1)]
$$ \sum_{\kappa_\g(gK) \in [\g]} u^{l_A(\kappa_\g(gK))}
= \vol([\g])u^{l_A([\g])}\bigg(\frac{q^{\de+1}+q^{\de}-2}{q-1}+
\frac{(q+1)q^{\de+2}u^3}{1-q^3 u^3}\bigg) \bigg(\frac{1-u^3}{1-q^2
u^3}\bigg).$$

\item[(A2)] If $[\g]$ does not have type $1$, then
$$ \sum_{\kappa_\g(gK) \in [\g], ~\text{type $1$}} u^{l_A(\kappa_\g(gK))}=
\vol([\g])u^{l_A([\g])}\bigg(q^{\de} + q^{\de - 1}+
\frac{(q^2-1)q^{\de +1}u^3}{1-q^3u^3} \bigg).$$

\item[(A3)] If $[\g]$ has type $1$, then
$$ \sum_{\kappa_\g(gK) \in [\g], ~\text{type $1$}} u^{l_A(\kappa_\g(gK))}=
\vol([\g])u^{l_A([\g])}\bigg(\frac{q^{\de+1}+q^{\de}-2}{q-1}+
\frac{(q^2-1)q^{\de +1}u^3}{1-q^3u^3} \bigg).$$
\end{itemize}

\item[(B)] $\g$ is ramified rank-one split. Then the
following hold.
\begin{itemize}
\item[(B1)]
$$ \sum_{\kappa_\g(gK) \in [\g]} u^{l_A(\kappa_\g(gK))} =
\vol([\g])q^{\mu}u^{l_A([\g])}\bigg(\frac{q^{\de +1} -1}{q-1} +
\frac{q^{\de +3}u^3}{1-q^3u^3} \bigg)\frac{1-u^3}{1-q^2u^3}.$$

\item[(B2)] If $[\g]$ does not have type $1$, then
$$ \sum_{\kappa_\g(gK) \in [\g], ~\text{type $1$}} u^{l_A(\kappa_\g(gK))}=
\vol([\g])u^{l_A([\g])}\bigg(q^{\de}(q^{\mu} - \mu) +
\frac{(q-1)q^{\de + \mu +2}u^3}{1-q^3u^3}\bigg).$$

\item[(B3)] If $[\g]$ has type $1$, then
$$ \sum_{\kappa_\g(gK) \in [\g], ~\text{type $1$}} u^{l_A(\kappa_\g(gK))}=
\vol([\g])u^{l_A([\g])}\bigg(\frac{q^{\de +1} -1}{q-1} +
\frac{(q-1)q^{\de +2}u^3}{1-q^3u^3}\bigg).$$
\end{itemize}
\end{itemize}
Moreover, in each case, if $[\g]$ does not have type $1$, none of
the type $1$ cycles in $[\g]$ are tailless.
\end{theorem}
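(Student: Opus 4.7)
The plan is to reduce both sums to an enumeration over the representatives $S$ of $C_G(r_\g)\backslash G/K$ from Proposition \ref{rankonerep}, weighted by the orbit sizes of Corollary \ref{rankonedoublecosets}. Since $\kappa_\g(P_\g hsK)$ has the same algebraic length as $\kappa_\g(P_\g sK)$ for all $h \in C_G(r_\g)$, one has
\[
\sum_{\kappa_\g(gK) \in [\g]} u^{l_A(\kappa_\g(gK))} \;=\; \sum_{s \in S} \#\!\bigl(C_{P_\g^{-1}\G P_\g}(r_\g) \backslash C_G(r_\g)sK/K\bigr)\, u^{l_A(\kappa_\g(P_\g sK))},
\]
and Corollary \ref{rankonedoublecosets} gives the inner cardinality as $\vol([\g])$ times $1$, $q^n$, or $q^n + q^{n-1}$ depending on $n$ and $\ord_\pi c$. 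Moreover, $l_A(\kappa_\g(P_\g sK)) = l_A([\g]) - 3 e_1$, where $e_1$ is the first elementary divisor of the matrix $M(x, y, i) = s^{-1} r_\g s$ displayed in the proof of Theorem \ref{rankoneminlength}.

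The second step is the count itself. Since $\pi^{e_1}$ equals the gcd of the entries of $M(x, y, i)$, the condition $e_1 = -k$ becomes a set of $\ord_\pi$-inequalities on $(x, y) \in (F/\oo)^2$, indexed by $i \ge 0$ and $k \ge 0$. For each fixed $i$ the admissible pairs form a product of residue-disc differences whose cardinalities are polynomials in $q$ controlled by $\de = \ord_\pi d$, $\mu = \ord_\pi((a - e)^2 - db(a - e) - cd^2)$, and $\ord_\pi c$. Summing $\#\{(x,y): e_1 = -k\}\, u^{l_A([\g]) + 3k}$ against the orbit factor and then over $i \ge 0$ produces formal geometric series whose numerators collapse to the closed forms in (A1) and (B1); the two factors $(1 - u^3)/(1 - q^2 u^3)$ and $1/(1 - q^3 u^3)$ arise from the $i$- and $k$-summations respectively. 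The split into unramified ($\ord_\pi c = 0$) and ramified ($\ord_\pi c = 1$) cases is forced both by Corollary \ref{rankonedoublecosets} and by the $2 \times 2$ minor that controls $e_1 + e_2$.

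For the type $1$ sums, one imposes the further constraint $e_2 = e_1$, equivalently $e_1 + e_2 = -2k$. Using the proof of Theorem \ref{rankoneminlength}, the $2 \times 2$ minors of $M(x, y, i)$ include $ae$, $ad\pi^i$, and $e^2 + bed - cd^2$ (each of order at least $m$), so the constraint is incompatible with $k = 0$ whenever $m > 0$. The surviving counts over $k \ge 1$ yield the single-geometric-series forms (A2) and (B2); when $[\g]$ itself has type $1$ (so $m = 0$) the $k = 0$ contribution is nonzero and is added on, producing (A3) and (B3). The final assertion that no type $1$ cycle in $[\g]$ is tailless when $[\g]$ is not of type $1$ is then immediate from Corollary \ref{rankonetailless}: a tailless cycle has the same type as $[\g]$, so if $[\g]$ is not type $1$, no tailless cycle can be either.

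The main obstacle is the case analysis: the minimum defining $e_1$ can be attained at any of the seven nonzero entries of $M(x, y, i)$, and the winner depends on the relative sizes of $\ord_\pi a$, $\ord_\pi e$, $\ord_\pi d$, $\ord_\pi c$, and $i$. Theorem \ref{rankoneminlength}(1) already partitions the rational forms accordingly, so the counting can be organized along the same subcases; but verifying that the resulting geometric series telescope uniformly into the compact closed forms stated will require careful algebra across all four cases (A1)--(B3), with the split versus rank-one-split parallels to Proposition \ref{sumoverunipotent} serving as the template.
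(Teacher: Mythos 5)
Your overall strategy matches the paper's: reduce to the set $S$ of Proposition \ref{rankonerep}, weight by the orbit sizes of Corollary \ref{rankonedoublecosets}, translate the length condition into constraints on $(x,y)\in(F/\oo)^2$, count solutions, and sum the geometric series. The final assertion via Corollary \ref{rankonetailless} is also handled as in the paper.

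However, there is a genuine error in the type-$1$ counting step. You claim that when $[\g]$ has type $(n,m)$ with $m>0$, the condition $e_1+e_2 = -2k$ is incompatible with $k=0$ because the $2\times 2$ minors ``$ae$, $ad\pi^i$, $e^2+bed-cd^2$'' each have order at least $m$. Two things go wrong. First, the relevant minor coming from the entry $d\pi^{-i}$ and the diagonal entry $a$ is $ad\pi^{-i}$, not $ad\pi^i$; its order is $\ord_\pi(ad)-i = \ord_\pi a + \de - i$, which becomes \emph{zero} at $i = \de$ when $a$ is a unit---and $a$ being a unit is exactly the situation where $[\g]$ is not type $1$. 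Second, even setting that aside, the $2\times 2$ minors that depend on $(x,y)$ can also be units, and you have not ruled those out. As a consequence, $k=0$ type-$1$ cycles \emph{do} exist when $[\g]$ is not type $1$: this is why the paper's formulas (A2) and (B2) have the nonzero constant terms $q^{\de}+q^{\de-1}$ and $q^{\de}(q^{\mu}-\mu)$, which the paper computes by noting that $e_1=e_2=0$ forces $i=\de$ and $(x,y)=(0,0)$, with orbit factor $q^{\de}+q^{\de-1}$ (resp.\ $q^{\de}$) from Corollary \ref{rankonedoublecosets}. Your plan of ``surviving counts over $k\ge 1$'' would omit these terms and produce incorrect formulas for (A2) and (B2). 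The fix is to treat the $m'=0$ contribution (length $=l_A([\g])$) separately in the non--type-$1$ cases, exactly as the paper does, keeping careful track of the sign in $ad\pi^{-i}$ and of which $i$ admit solutions with $e_1=e_2=0$.
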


\begin{remarks}

1. $\mu = 0$ unless $a, e, c$ are all nonunit, in which case it is
$1$ and $\de = 0$.

2. $\mu = 0$ when $[\g]$ has type one.

3. $\de > 0$ in case (A2), while $\de$ may be zero in case (A3).

4. The right hand side of the identities (A1) - (B3) can be expressed as $\vol([\g])$
times the orbital integrals at the rank-one split
element $\g$ of suitably chosen spherical functions on $G$ with fast decay.
\end{remarks}

\begin{proof}
Recall that the algebraic length of a cycle in $[\g]$ is equal to
$l_A([\g]) + 3m$ for some $m \ge 0$. We shall follow the same
notation and computation as in the proof of Theorem
\ref{rankoneminlength}, letting $g$ run through all elements in the
double coset representatives $S$ and computing, for each $m \ge 0$,
the number of cycles $\kappa_\g(P_\g gK)$ with $l_A(\kappa_\g(P_\g
gK)) \le l_A([\g]) + 3m$ using
Corollary \ref{rankonedoublecosets}. As $g = \left(\begin{matrix}1 & x & y \\ & 1 & 0\\
& & \pi^i
\end{matrix} \right)$, this amounts to computing the number of $x, y
\in F/\oo$ and $i \ge 0$ such that
\begin{eqnarray*}
e_1 = \min \{\ord_{\pi} ((a-e)x -d\pi^{-i}y) , ~\ord_{\pi} (-
cd\pi^ix + (a-e-db)y ),
 -i+\ord_{\pi}d \}\ge -m.
\end{eqnarray*} This is equivalent to $0 \le i \le m+\ord_{\pi}$$d $, $(a-e)x -d\pi^{-i}y
 \in \pi^{-m}\oo$ and $-cd\pi^ix + (a-e-db)y \in \pi^{-m}\oo$.
 Denote $\ord_\pi$$ d$ by $\de$ for short. So for each $0
\le i \le m+  \de $, we solve the following system of linear
equations
\begin{eqnarray}\label{system1}
\left(\begin{matrix}\alpha\\
\beta
\end{matrix}\right)
=
\left(\begin{matrix} a-e & -d\pi^{-i}\\
-cd\pi^i & a-e-db
\end{matrix}\right)
\left(\begin{matrix}x\\
y
\end{matrix}\right)=M \left(\begin{matrix}x\\
y
\end{matrix}\right)
\end{eqnarray}
for $\alpha, \beta \in \pi^{-m}\oo$ and count the distinct pairs
$(x, y) \in F/\oo \times F/\oo$. Recall that $a, e, d$ are
integral, at least one of them is a unit, and $a$ and $e$ cannot
be both units since $\ord_{\pi}$$ \det r_\g > 0$. Let
$$\mu := \ord_{\pi} \det M = \ord_{\pi} ((a-e)^2 - db(a-e) -
cd^2),$$ which is 0 unless $a$, $e$ and $c$ are all nonunits, in
which case it is $1$. Put
$$\ve := \min \{\ord_{\pi} (a-e), -i + \de, \ord_{\pi}
(a-e-bd)\},$$ which is equal to $-i + \de$ if $\de \le i \le m+\de$,
and 0 if $0 \le i < \de$. Then the coefficient matrix $M = k_1
\diag(\pi^{\ve}, \pi^{\mu-\ve})k_2$ for some $k_1, k_2 \in
GL_2(\oo)$. Thus system (\ref{system1}) has the same number of
solutions as the system
\begin{eqnarray}\label{system2}
\left(\begin{matrix}\alpha\\
\beta
\end{matrix}\right)
=
\left(\begin{matrix} \pi^{\ve} &  \\
  & \pi^{\mu -\ve}
\end{matrix}\right)
\left(\begin{matrix}x\\
y
\end{matrix}\right)
\end{eqnarray}
for $\alpha, \beta \in \pi^{-m}\oo$ and $(x, y) \in F/\oo \times
F/\oo$. We get the solutions $x \in \pi^{-m -\ve}\oo/\oo$ and $y \in
\pi^{-m -\mu +\ve}\oo/\oo$ so that there are $q^{2m+\mu}$ different
pairs $(x, y)$ for each $0 \le i \le m+  \de $. To proceed, we
distinguish two cases.

Case (A) $\ord_{\pi}$$ c = 0$, that is, $\g$ is unramified rank-one
split. Then $\mu = 0$. By Corollary \ref{rankonedoublecosets}, the
number of classes in $[\g]$ with algebraic length at most
$l_A([\g])+3m$ is
\begin{eqnarray*}
\vol([\g])q^{2m}(1 +
 \sum_{1 \le n \le m+\de} q^n + q^{n-1}) &=&
\vol([\g])q^{2m}(\frac{q^{m+\de}-1}{q-1} + \frac{q^{m+\de+1}-1}{q-1})\\
&=& \frac{\vol([\g])}{q-1}(q^{3m+\de+1}+ q^{3m+\de} -2q^{2m}).
\end{eqnarray*}
Therefore
\begin{eqnarray*}
& & \sum_{\kappa_\g(gK) \in [\g]} u^{l_A(\kappa_\g(gK))} =
\sum_{\kappa_\g(P_\g gK) \in [\g]} u^{l_A(\kappa_\g(P_\g gK))} \\
&=&\vol([\g])u^{l_A([\g])}\frac{1}{q-1}\bigg(q^{\de+1} + q^{\de} -2 +\\
& &~~~~~~~~\sum_{m
\ge 1}(q^{3m+\de+1}+q^{3m+\de}-2q^{2m} -q^{3m+\de-2}-q^{3m+\de-3}+2q^{2m-2}) u^{3m}\bigg) \\
&=& \vol([\g])u^{l_A([\g])}\frac{1}{q-1}\bigg(\frac{q^{\de+1} +
q^{\de}}{1 - q^3u^3} - \frac{2}{1-q^2u^3}\bigg)(1-u^3)\\
&=& \vol([\g])u^{l_A([\g])}\bigg(\frac{q^{\de+1}+q^{\de}-2}{q-1}+
\frac{(q+1)q^{\de+2}u^3}{1-q^3 u^3}\bigg) \bigg(\frac{1-u^3}{1-q^2
u^3}\bigg).
\end{eqnarray*}

Among the cycles with $l_A(\kappa_\g(P_\g gK)) = l_A([\g]) + 3m$, we
compute the number of those with type $1$.  First consider the case
$m \ge 1$. In order that $l_A(\kappa_\g(P_\g gK)) = l_A([\g]) + 3m$
and $\kappa_\g(P_\g gK)$ has type $1$, two conditions must be
satisfied:
$$
e_1 = \min \{\ord_{\pi} ((a-e)x -d\pi^{-i}y) , ~\ord_{\pi} (-
cd\pi^ix + (a-e-db)y ),
 -i+\de \}= -m, $$ and
\begin{eqnarray*}
e_1+e_2 = \ord_{\pi} [((a-e)x -d\pi^{-i}y)(e+db) - d\pi^{-i}(-
cd\pi^ix + (a-e-db)y)] = -2m.
\end{eqnarray*}
These two conditions are equivalent to $i = \de +m$, $ \ord_{\pi}$$
(-cd\pi^ix + (a-e-db)y ) = -m,$ and $\ord_{\pi}$$ ((a-e)x
-d\pi^{-i}y)\ge -m.$ This amounts to solving system (\ref{system1})
with $\alpha \in \pi^{-m}\oo$ and $\beta \in \pi^{-m}\oo^\times$,
hence we obtain $(q-1)q^{2m-1}$ distinct pairs $(x, y)$. Combined
with Corollary \ref{rankonedoublecosets}, we see that the number of
rank one cycles $\kappa_\g(P_\g gK)$ with $l_A(\kappa_\g(P_\g
gK))=l_A([\g]) + 3m$ is $\vol([\g])(q-1)q^{2m-1}(q^{\de + m} + q^{\de +
m -1})$.

Next consider the case $m=0$. Under the assumption $\ord_{\pi}$$ c =
0$, we know from Theorem \ref{rankoneminlength} that $[\g]$ has
type $(\ord_{\pi}$$ a, \min\{\ord_{\pi}$$ e, \ord_{\pi}$$ d\})$.
Therefore it has type $1$ if and only if $\ord_{\pi}$$ a
> 0$, in which case all cycles in $[\g]$
with algebraic length equal to $l_A([\g])$ have type $1$, and the
number of such cycles is $\vol([\g])\frac{q^{\de+1}+q^{\de}-2}{q-1}$,
as computed above. If $[\g]$ does not have type $1$, then $\de =
\ord_{\pi}$$ d > 0$; the condition $e_1 = e_2 = 0$ implies $i = \de$
and only one solution $(x, y)=(0, 0)$. In this case the number of
type $1$ cycles in $[\g]$ with algebraic length equal to $l_A([\g])$
is $q^{\de} + q^{\de - 1}$ by Corollary \ref{rankonedoublecosets}.
Put together, we have shown the following:

If $[\g]$ has type $1$, then
\begin{eqnarray*}
\sum_{\kappa_\g(gK) \in [\g], ~\text{type $1$}}
u^{l_A(\kappa_\g(gK))} &=&
\vol([\g])u^{l_A([\g])}\bigg(\frac{q^{\de+1}+q^{\de}-2}{q-1}+ \sum_{m
\ge 1}(q-1)q^{2m-1}(q^{\de + m} + q^{\de + m -1})u^{3m} \bigg) \\
&=& \vol([\g])u^{l_A([\g])}\bigg(\frac{q^{\de+1}+q^{\de}-2}{q-1}+
\frac{(q^2-1)q^{\de +1}u^3}{1-q^3u^3} \bigg),
\end{eqnarray*}
while if $[\g]$ does not have type $1$, then
\begin{eqnarray*}
 \sum_{\kappa_\g(gK) \in [\g], ~\text{type $1$}}
u^{l_A(\kappa_\g(gK))} = \vol([\g])u^{l_A([\g])}\bigg(q^{\de} + q^{\de
- 1}+ \frac{(q^2-1)q^{\de +1}u^3}{1-q^3u^3} \bigg).
\end{eqnarray*}

Case (B) $\ord_{\pi}$$ c = 1$, that is, $\g$ is ramified rank-one
split. Then $\mu = 0$ or $1$. The same computation as in Case (A)
together with Corollary \ref{rankonedoublecosets} shows that the
number of classes in $[\g]$ with algebraic length at most
$l_A([\g])+3m$ is
$$\vol([\g])q^{2m+\mu}\sum_{0 \le n \le m+  \de} q^n =
\vol([\g])q^{2m+\mu}\frac{q^{m+\de +1} - 1}{q-1} =
\vol([\g])\frac{q^{\mu}}{q-1}(q^{3m+\de +1} - q^{2m}).$$ Therefore
\begin{eqnarray*}
\sum_{\kappa_\g(gK) \in [\g]} u^{l_A(\kappa_\g(gK))}
&=&\vol([\g])\frac{q^{\mu}}{q-1}u^{l_A([\g])}\bigg(\sum_{m \ge
0}(q^{3m+\de +1} - q^{2m})u^{3m} - \sum_{m \ge
1}(q^{3m+\de -2} - q^{2m-2})u^{3m}\bigg) \\
&=& \vol([\g])\frac{q^{\mu}}{q-1}u^{l_A([\g])}\bigg(\frac{q^{\de
+1}}{1-q^3u^3} - \frac{1}{1-q^2u^3}\bigg)(1-u^3) \\
&=& \vol([\g])q^{\mu}u^{l_A([\g])}\bigg(\frac{q^{\de +1} -1}{q-1} +
\frac{q^{\de +3}u^3}{1-q^3u^3} \bigg)\frac{1-u^3}{1-q^2u^3}.
\end{eqnarray*}

 Now we compute the number of type $1$ cycles $\kappa_\g(P_\g gK)$ with
  algebraic length $l_A(\kappa_\g(P_\g gK)) = l_A([\g]) + 3m$.
First consider the case $m \ge 1$. Following the same argument as in
Case (A) and applying Corollary \ref{rankonedoublecosets}, we see
that the number of such cycles is $\vol([\g])(q-1)q^{2m +\mu -1}q^{\de
+ m}$.

Next we discuss the remaining case $m = 0$. By Theorem
\ref{rankoneminlength}, $[\g]$ has type one if and only if
$\ord_{\pi}$$ a > 0$ and $\ord_{\pi}$$ e = 0$, in which case all cycles
in $[\g]$ with algebraic length equal to $l_A([\g])$ are of type
one, and the number of such cycles is $\vol([\g])q^{\mu}\frac{q^{\de
+1} -1}{q-1}$. When $[\g]$ does not have type $1$, we have
$\ord_{\pi}$$ e > 0$; the condition $e_1 = e_2 = 0$ implies $i = \de$.
Moreover, if $\mu= 0$, in which case $a$ is a unit, then there is
only one pair $(x, y) = (0, 0)$; while if $\mu = 1$, in which case
$a$ is not a unit, then there are $q-1$ pairs $(x, y) = (0, y)$ with
$ y \in \pi^{-1}\oo^\times/\oo$ so that $\ord_{\pi}$$ (-cd\pi^ix +
(a-e-db)y ) = 0$. Consequently, when $[\g]$ does not have type $1$,
the number of type $1$ cycles in $[\g]$ with algebraic length equal
to $l_A([\g])$ is $\vol([\g])q^{\de}$ if $\mu = 0$, and
$\vol([\g])(q-1)q^{\de}$ if $\mu = 1$. In other words, it is
$\vol([\g])q^{\de}(q^{\mu} - \mu)$. Summing up, we have proved the
following:

If $[\g]$ has type $1$, then
\begin{eqnarray*}
\sum_{\kappa_\g(gK) \in [\g], ~\text{type $1$}}
u^{l_A(\kappa_\g(gK))} &=&
\vol([\g])u^{l_A([\g])}q^{\mu}\bigg(\frac{q^{\de +1} -1}{q-1} +
\sum_{m \ge 1}(q-1)q^{3m +\de -1}u^{3m}\bigg) \\
&=& \vol([\g])u^{l_A([\g])}q^{\mu}\bigg(\frac{q^{\de +1} -1}{q-1} +
\frac{(q-1)q^{\de +2}u^3}{1-q^3u^3}\bigg),
\end{eqnarray*}
while if $[\g]$ does not have type $1$, then
\begin{eqnarray*}
\sum_{\kappa_\g(gK) \in [\g], ~\text{type $1$}}
u^{l_A(\kappa_\g(gK))} &=& \vol([\g])u^{l_A([\g])}\bigg(q^{\de}(q^{\mu}
- \mu) + \frac{(q-1)q^{\de + \mu +2}u^3}{1-q^3u^3}\bigg).
\end{eqnarray*}
This completes the proof of the theorem.
\end{proof}

As before, let
\begin{eqnarray}
\Delta_A([\g]) = \{ gK \in G/K ~|~ l_A(\kappa_\g(P_\g gK)) =
l_A([\g]) \}.
\end{eqnarray}
Then $\Delta_A([\g])$ contains $C_G(r_\g)K/K$ and it is invariant
under left multiplication by $C_{P_\g^{-1} \G P_\g}(r_\g)$.
Moreover, $C_{P_\g^{-1} \G P_\g}(r_\g)\backslash \Delta_A([\g])$ is
finite, and its cardinality is the number of algebraically tailless
cycles in $[\g]$. Contained in the proofs of Corollary
\ref{rankonedoublecosets} and Theorem \ref{numberinrankoneclass} is
the first assertion of the proposition below. Let
\begin{eqnarray}\label{ginuandgiz}
g_{i,j,u} = \left(
\begin{matrix}
1 &  & \\
& \pi^{i-j} & u\\
& & \pi^j
\end{matrix} \right) \quad \text{and} \quad g_{i,z} = \left(
\begin{matrix}
1 &  & \\
& \pi^{i} & z\\
& & 1
\end{matrix} \right).
\end{eqnarray}

\begin{proposition}\label{rankonenumberofalgtailless}
Let $\g \in [\G]$ be rank-one split with $r_\g = \left(
\begin{matrix} a &  &  \\ & e & dc \\ & d& e+db \end{matrix}\right) $. Set
$\de = \de([\g])= \ord_{\pi}$$ d$. Suppose that $[\g]$ has type $1$
with $ n = \ord_\pi$$ a$.
Then
\begin{eqnarray*}
\Delta_A([\g]) &=& \{h g_{i,j,u} K ~|~ h \in  C_G(r_\g)/(C_G(r_\g)
\cap K), ~0 \le j \le i
\le \delta, \\
& & ~~u \in \oo^\times/\pi^{i-j}\oo ~\text{for} ~j < i,
  ~\text{and} ~ u = 0 ~\text{for} ~j = i\}
\end{eqnarray*} if $\g$ is ramified rank-one split, and
\begin{eqnarray*}
\Delta_A([\g]) = \{h g_{i,j,u} K~|~ h ~\text{and} ~g_{i,j,u}
~\text{as above}\} ~\cup ~\{hg_{i,z}K ~|~ h ~\text{as above}, ~1 \le
i \le \delta, z \in \pi \oo/\pi^i \oo \}
\end{eqnarray*}
if $\g$ is unramified rank-one split. Consequently, the number of
algebraically tailless cycles in $[\g]$ is
\begin{eqnarray*}
\#(C_{P_\g^{-1} \G P_\g}(r_\g)\backslash \Delta_A([\g])) =
\vol([\g])\left\{
\begin{array}{ll}
\frac{q^{\de+1}+q^{\de}-2}{q-1} & \mbox{if $[\g]$ is unramified rank-one split}, \\
\frac{q^{\de +1} -1}{q-1}& \mbox{if $[\g]$ is ramified rank-one
split}.
\end{array}
\right.
\end{eqnarray*}
Moreover, for $g = h g_{i,j,u}$ or $hg_{i,z}$ such that $gK \in
\Delta_A([\g])$, the geodesic $\kappa_\g(P_{\g}gK)$ in $\B$ is given
by $P_{\g}gK \rightarrow P_{\g}g \diag(\pi, 1, 1)K \rightarrow
\cdots \rightarrow P_{\g}g \diag(\pi^n, 1, 1)K = \g P_{\g}gK.$
\end{proposition}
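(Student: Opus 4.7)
The plan is to specialize the analysis in the proofs of Theorem \ref{rankoneminlength} and Theorem \ref{numberinrankoneclass} to the tailless case $m = 0$ under the type $1$ assumption, then use Proposition \ref{Cgammadoublecosets} to enumerate the representatives, and finally verify the geodesic description by direct conjugation.

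First I would invoke Proposition \ref{rankonerep} to parametrize $C_G(r_\g) \backslash G / K$ by the set $S$, and then apply the invariant-factor criterion from the proof of Theorem \ref{rankoneminlength}: $P_\g g K \in \Delta_A([\g])$ if and only if $e_1 = 0$. Since $[\g]$ has type $1$, Remark 2 after Theorem \ref{numberinrankoneclass} gives $\mu = 0$, so the linear system studied in the proof of Theorem \ref{numberinrankoneclass} collapses at $m = 0$ to a unique solution $(x, y) = (0, 0)$ for each $0 \le i \le \de$. Hence the $S$-representatives of $C_G(r_\g) \backslash \Delta_A([\g])$ are exactly $s_i = \diag(1, 1, \pi^i)$ for $0 \le i \le \de$.

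Next, for each $s_i$ I would decompose the orbit $C_G(r_\g) s_i K / K$ into $\U_L$-orbits, where $\U_L = C_G(r_\g) \cap K$ by Lemma \ref{Omega}. The coset representatives of $\U_L \left(\begin{smallmatrix} 1 & \\ & \pi^i \end{smallmatrix}\right) K'/K'$ listed in the proof of Proposition \ref{Cgammadoublecosets} lift, under the embedding of $L^\times$ from \S\ref{centralizers}, to the $3 \times 3$ matrices $g_{i, j, u}$ from (\ref{ginuandgiz}), with the additional family $g_{i, z}$ appearing in the unramified case. Left-multiplying by representatives of $C_G(r_\g)/\U_L$ yields the asserted description of $\Delta_A([\g])$. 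Modding further by $C_{P_\g^{-1}\G P_\g}(r_\g)$ contributes the factor $\vol([\g])$ via (\ref{fundamentaldomain'}), and summing the sizes from Proposition \ref{Cgammadoublecosets} over $0 \le i \le \de$ produces the claimed totals $\vol([\g])(q^{\de+1}-1)/(q-1)$ in the ramified case and $\vol([\g])(q^{\de+1}+q^{\de}-2)/(q-1)$ in the unramified case.

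Finally, for the geodesic I would reduce to checking $g_{i, j, u}^{-1} r_\g g_{i, j, u} \in \diag(\pi^n, 1, 1) K$ (and the analogous statement for $g_{i, z}$), since any $h \in C_G(r_\g)$ commutes with $r_\g$. This identity gives $\g P_\g g K = P_\g g \diag(\pi^n, 1, 1) K$; each intermediate step from $P_\g g \diag(\pi^k, 1, 1) K$ to $P_\g g \diag(\pi^{k+1}, 1, 1) K$ is a type $1$ edge because $\diag(\pi, 1, 1)$ is a left $K$-coset in $T_{1, 0}$, and the uniqueness of type $1$ geodesics (\S2.3) then identifies this path with $\kappa_\g(P_\g g K)$. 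The verification is a direct conjugation: the $(1, 1)$ entry becomes $a$ with $\ord_\pi a = n$, the first row and column are otherwise zero, and under $0 \le j \le i \le \de$ the lower-right $2 \times 2$ block is shown to lie in $\GL_2(\oo)$, so that $\diag(\pi^{-n}, 1, 1) g_{i, j, u}^{-1} r_\g g_{i, j, u}$ is in $K$. The main obstacle will be tracking the entries of that lower-right block, particularly the $(2, 3)$ entry $-d\pi^{-i}(u^2 + ub\pi^j - c\pi^{2j})$: in the ramified corner case $j = i$, $u = 0$, integrality relies on $\ord_\pi c = 1$ to supply the slack $\de + 1 + i \ge 0$, while in the unramified $g_{i, z}$ computation one uses that $c$ is a unit together with $z \in \pi\oo$ to make $z^2 + bz - c$ a unit, matching the bound $\de - i \ge 0$.
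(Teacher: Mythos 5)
Your proposal is correct and follows essentially the same route as the paper: the paper dispatches the first assertion to the computations already carried out for Corollary~\ref{rankonedoublecosets} and Theorem~\ref{numberinrankoneclass} (specialize to $m=0$, where the type $1$ hypothesis forces $\mu=0$ and hence $(x,y)=(0,0)$, then lift the $\U_L$-coset representatives of Proposition~\ref{Cgammadoublecosets}), and justifies the geodesic description with the chain $P_\g g\,\diag(\pi^n,1,1)K = P_\g g\, r_\g K = P_\g r_\g gK = \g P_\g gK$, which you unpack. One remark: your formulation $g_{i,j,u}^{-1}r_\g g_{i,j,u}\in\diag(\pi^n,1,1)K$ is the condition the chain actually needs (the paper's wording ``$g^{-1}r_\g g\in K$ by choice'' should be read as $\in r_\g K=\diag(\pi^n,1,1)K$), so your version is more precise. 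A minor quibble on the final verification: you write that in the ramified corner $j=i$, $u=0$, integrality of the $(2,3)$ entry ``relies on $\ord_\pi c=1$''; in fact that entry is $dc\pi^i$, of order $\de+\ord_\pi c + i\ge 0$ for all $\de,i\ge 0$ regardless of $\ord_\pi c$, so integrality there is automatic and the conjugation check is simpler than your parenthetical suggests. This does not affect the correctness of the argument.
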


The last assertion follows from $P_{\g}g \diag(\pi^n, 1, 1)K =
P_{\g}g r_\g K = P_{\g}r_\g gK = \g P_{\g}gK$ since $g^{-1} r_\g g
\in K$ by choice.

\subsection{Tailless type $1$ primitive cycles}\label{primitivecycles}

A cycle in $X_\G$ is {\it primitive} if
it is not obtained by repeating a cycle more than once. Suppose that $\kappa_\g(gK)$ is
$\kappa_{\beta}(gK)$ repeated $m$ times in $X_\G$. Then $\g gK =
\beta^m gK$ in $\B$. As the action of $\G$ on $\B$ is fixed point
free, this implies $\g = \beta^m$. In other words, a necessary
condition for a cycle $\kappa_\g(gK)$ to be non-primitive is that
$\g$ is a positive power of a non-identity element in $\G$.

On the other hand, suppose $\g \in \G$ is of type $1$ and $\g = \beta^m$ for a
unique $\beta \in \G$ and $m > 1$. Then $r_\g$ and
$r_{\beta}$ have the same centralizers in $\G$, and $vol([\g]) =
vol([\beta^j])$ for all $j \ge 1$ (cf. \S \ref{volume}). Moreover, $\beta^j$ also has type $1$ and $\delta([\beta^j]) \le \delta([\g])$ for all positive
divisors $j$ of $m$. Combining Corollary \ref{primitivesplit} and
Proposition \ref{rankonenumberofalgtailless}, we conclude that
$\Delta_A([\beta^j]) \subset \Delta_A([\g])$ for $j|m$, and the
cycles $\kappa_\g(gK)$ with $gK$ in $\Delta_A([\g]) \smallsetminus
\cup_{j|m, ~0<j<m} ~\Delta_A([\beta^j])$ are the tailless type $1$
primitive closed geodesics in $[\g]$. Further, by shifting vertices
on such a cycle we obtain $l_A([\g])$ distinct cycles.

This is different from the case of graphs arising from $\PGL_2(F)$.
See more discussions about this at the end of \S10.

\section{Gallery Zeta function of $X_\G$}

\subsection{Chambers and Iwahori-Hecke algebra on the building $\B$}

A chamber of the building $\B = G/K$ is a $2$-simplex with three
mutually adjacent vertices $v_1, v_2, v_3$. The group $G$ acts on
the vertices of $\B$ transitively, and it preserves edges and
chambers of $\B$. Let
$$\sigma =\left( \begin{matrix}  & 1 &  \\
& & 1 \\ \pi & & \end{matrix}\right).$$ Denote by $C_0$ the
fundamental chamber with vertices $v_1= K, v_2=\sigma K$, and
$v_3=\sigma^2 K$. The Iwahori subgroup $B$ of $K$ consisting of
elements $k \in K$ congruent to upper triangular matrices mod $\pi$
is the largest subgroup of $G$ stabilizing each vertex of $C_0$,
while $\sigma$ rotates the vertices of $C_0$. Denote by $\sigma'$
the permutation $(1~2~3)$ in $S_3$ such that $\sigma(v_i) =
v_{\sigma'(i)}$. Since $G$ acts transitively on the chambers of
$\B$, the assignment $gB \mapsto gC_0$ is a three-to-one map from
$G/B$ to the set of chambers such that $gB$, $g\sigma B$ and $g
\sigma^2 B$ all correspond to the same chamber $gC_0$. The matrices
$$t_1=\left( \begin{matrix}  & 1 & \\
 1& &  \\\ & & 1\end{matrix}\right), \quad t_2= \left( \begin{matrix}  &  & \pi^{-1} \\
 &1 &  \\\pi & & \end{matrix}\right), \quad  \quad \text{and} \quad
 t_3=\left( \begin{matrix}  1 &  & \\
 &  & 1 \\\ & 1 &  \end{matrix}\right)$$
act as reflections which fix the edges $\{v_1, v_2\}$, $\{v_2,
v_3\}$ and $\{v_3, v_1\}$ of $C_0$, respectively. We have $\sigma
t_i = t_{\sigma'(i)}\sigma$ for $i = 1, 2, 3$.

Note that $t_1, t_2, t_3$ generate the Weyl group $W$ of
$\PSL_3(F)$ subject to the relations $t_i^2 = Id$ and $(t_i t_j)^3
= Id$ for $i \ne j$.
The Bruhat decomposition of $G$ is
$$ G= \coprod_{w \in W \ltimes \langle \sigma\rangle}  B w B. $$
Each element $w \in W \ltimes \langle \sigma \rangle$ defines an operator $L_{w}$
on $L^2(G/B)$ by sending a function $f$ to $L_{w}f$ given by
$$ L_{w} f (gB) = \sum_{w_i B \in BwB/B} f(g w_i B)\quad \qquad \text{for all} ~gB.$$
These operators form a generalized Iwahori-Hecke algebra satisfying
the following relations (cf. \cite{Ga}):
\begin{itemize}
\item[1.] $L_{t_i}\cdot L_{t_i}= (q-1) L_{t_i} + q Id$,

\item[2.] $L_{t_i}\cdot L_{t_j}=  L_{t_i t_j}$ for $i \ne j$,

\item[3.] $L_{t_i}\cdot L_w = L_{t_i w}$ if the length of $t_iw$
is $1$ plus the length of $w$,

 \item[4.]
$L_{\sigma} \cdot L_{t_i}= L_{ \sigma t_i }= L_{
t_{\sigma'(i)}\sigma} $ for $i = 1, 2, 3$.
\end{itemize}

\noindent Let
\begin{eqnarray}\label{LB}
L_B = L_{t_2 \sigma^2}.
\end{eqnarray} Then the above properties
imply $(L_B)^{3n} = (L_{t_2t_1t_3})^n$ for $n \ge 1$.

\subsection{Galleries in $\B$} Paths formed by the edge-adjacent chambers are called
galleries. A geodesic gallery between two chambers is a gallery
containing the least number of intermediate chambers.
To get geodesic galleries from $g_1B$ to $g_2B$, we find the element
$w \in W \ltimes\langle \sigma \rangle$ such that $g_1^{-1} g_2 \in BwB$ and
write $w = t_{i_1}\cdots t_{i_n}\sigma^j$ as a word using least
number of reflections $t_1, t_2, t_3$; call $n$ the {\it length} of
the gallery. All geodesic galleries from $g_1B$ to $g_2B$ have
length $n$; different galleries arise from different expressions of
$w$ as a product of generators, and they are regarded as {\it
homotopic}. Like the case of paths, given two distinct chambers
$g_1B$ and $g_2B$, there is only one homotopic class of geodesic
galleries in $\B$ from $g_1B$ to $g_2B$.

Observe that a geodesic gallery arising from $w = t_{i_1}\cdots
t_{i_n}\sigma^j$ is a strip if and only if the difference $i_k -
i_{k+1}$ remains the same mod 3 for $ 1 \le k \le n-1$. It is said
to have type $1$ or $2$ according to the common difference being $1$
or $2$. Note that the homotopy class of a gallery of type $1$ or $2$
contains only one geodesic gallery, thus we shall drop the word
"homotopy" in this case.

\subsection{Closed galleries in $X_\G$} A closed gallery in $X_\G$
starting at the chamber $\G gB$ of $X_\G$ can be lifted to a gallery
in $\B$ starting at $gB$ and ending at $\g gB$ for some $\g \in \G$.
Denote by $\ka_\g(gB)$ the homotopy class of geodesic galleries in
$\B$ from $gB$ to $\g gB$. By abuse of notation, it also represents
a homotopy class of closed geodesic gallery in $X_\G$ starting at
$\G gB$.
The
 argument in \S \ref{parametrization} holds with $K$ replaced by $B$.
 Let, for $\g \in [\G]$,
 $$ [\g]_B = \{\ka_\g(gB)~:~ g \in C_\G(\g) \backslash G /B \}.$$
 Then the union of $[\g]_B$ over $\g \in [\G]$ is the set of all
  vertex-based homotopy classes of closed geodesic galleries
 in $X_\G$.

 A closed gallery $\ka_\g(gB)$ of length $n$ in $X_\G$ is called {\it tailless} if the
 geodesic gallery in $\B$ from $gB$ to $\g gB$ followed by the geodesic gallery
 from $\g gB$ to $\g^2 gB$ is a
 geodesic gallery from $gB$ to $\g^2 gB$ of length $2n$.  Note that the condition (I)
 imposed on $\G$ in \S\ref{rationalform} implies that $g^{-1} \g g \in BWB$ for all $g
 \in G$ and $\g \in \G$. So if $\ka_\g(gB)$ has length $n$, then $g^{-1} \g g \in B w B$
 for some $w = t_{i_1}\cdots t_{i_n} \in W$ of length $n$.
 Since $g^{-1} \g^2 g \in BwB \cdot BwB$, then $\ka_{\g^2}(gB)$ has
 length $2n$ if and only if the word $w^2$ has length $2n$, which
 is equivalent to $BwB \cdot BwB = B w^2 B$.

 \begin{proposition}\label{typeonetaillessgallery}
Let $\ka_\g(gB)$ be a type $1$ tailless closed gallery in $X_\G$. Let $w \in W$
be such that $g^{-1} \g g \in B w B$.
 Then its length $n=3m$ is a multiple of $3$
and $w \in \{(t_3t_2t_1)^m, (t_2t_1t_3)^m, (t_1t_3 t_2)^m\}$.
\end{proposition}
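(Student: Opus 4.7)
The plan is to translate the tailless condition into the algebraic identity $\ell(w^2) = 2\ell(w)$ in the Weyl group $W$, and then rule out all lengths $n$ with $n \not\equiv 0 \pmod{3}$ by short braid manipulations at the junction of the concatenated word $ww$.

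First, by condition (I) on $\G$ together with the remark immediately preceding the proposition, $g^{-1}\g g$ lies in $BwB$ for some reduced expression $w = t_{i_1}\cdots t_{i_n} \in W$, and the tailless hypothesis is equivalent to $BwB \cdot BwB = Bw^2B$, that is, to $\ell(w^2) = 2n$. The type~$1$ assumption forces $i_k - i_{k+1} \equiv 1 \pmod{3}$, so the index sequence cycles through one of the three patterns $3,2,1,3,2,1,\ldots$, or $2,1,3,2,1,3,\ldots$, or $1,3,2,1,3,2,\ldots$.

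Writing $n = 3m+r$ with $0 \le r \le 2$, I claim $r = 0$. If $r = 1$, then $i_n \equiv i_1 \pmod{3}$, so the two letters of $ww$ straddling the junction are $t_{i_n} t_{i_1} = t_{i_1}^2 = 1$, which shows $\ell(w^2) \le 2n - 2$, contradicting reducedness. If $r = 2$, then $i_n \equiv i_1 - 1$, $i_{n-1} \equiv i_1$, and $i_2 \equiv i_n \pmod{3}$; writing $a = i_1$, $b = i_n$, $c = i_3$ (three distinct elements of $\{1,2,3\}$), the word $ww$ contains the five-letter substring $t_a t_b t_a t_b t_c$ across the junction, and applying the braid relation $t_a t_b t_a = t_b t_a t_b$ rewrites it as $t_b t_a t_b t_b t_c = t_b t_a t_c$, yielding $\ell(w^2) \le 2n - 2$, again contradicting reducedness. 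Hence $r = 0$.

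With $n = 3m$, the three admissible index patterns leave $w$ equal to one of $(t_3 t_2 t_1)^m$, $(t_2 t_1 t_3)^m$, or $(t_1 t_3 t_2)^m$, and these are automatically reduced of length $3m$ because they come from the reduced expression associated to the geodesic gallery. The only nontrivial step is the braid rewriting in the $r = 2$ case; everything else is bookkeeping with the Iwahori--Bruhat product rule and with the cycling pattern of type~$1$ indices.
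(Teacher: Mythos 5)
Your proof is correct and follows the same strategy as the paper's: translate taillessness into the Weyl-group identity $\ell(w^2) = 2\ell(w)$, observe that the type-$1$ condition makes the index sequence cycle, and then reduce $w^2$ at the junction when $n \not\equiv 0 \pmod 3$. The paper leaves the length computation as an exercise ("one checks easily"); you have supplied the braid manipulations explicitly, which is exactly the content of that check.

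One very small edge case is worth flagging. In the $r = 2$ branch your five-letter window $t_a t_b t_a t_b t_c$ straddling the junction requires $n \ge 3$ (you need $i_{n-1}$ and $i_3$ to exist as distinct positions). For $n = 2$ one has $ww = t_a t_b t_a t_b$, and the same braid relation $t_a t_b t_a = t_b t_a t_b$ gives $t_a t_b t_a t_b = t_b t_a t_b t_b = t_b t_a$, so $\ell(w^2) \le 2 < 4$; equivalently one can simply invoke $(t_a t_b)^3 = 1$. The idea is identical, but as written the five-letter substring literally fails to exist, so you should add this one line. With that, the argument is complete.
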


\begin{proof} Write $w = t_{i_1}\cdots t_{i_n}$. Since $\ka_\g(gB)$
has type $1$, $w$ is one of the three length $n$ words:
$t_3t_2t_1...$ or $t_2t_1t_3...$, or $t_1t_3 t_2...$. One checks
easily that if $n$ is not a multiple of $3$, then the length of
$w^2$ is less than $2n$, while if $n$ is a multiple of $3$, the
length of $w^2$ is $2n$.
\end{proof}

We want to count the number of type $1$ tailless closed geodesic
galleries in $X_\G$ of length $3n$. Before doing this, some remark
is in order. Note that $\sigma t_1t_3t_2 \sigma^{-1} = t_2 t_1 t_3$
and $\sigma^2 t_1t_3t_2 \sigma^{-2} = t_3 t_2 t_1$. By applying
suitable powers of $t_2 t_1 t_3$ to $gB$, $g\sigma B$ and $g\sigma^2
B$, we obtain all tailless type $1$ geodesic galleries starting at
the chamber $gC_0$. {\it In what follows,
we shall use the three $B$-coset representatives for each chamber,
but call $\ka_\g(gB)$ type $1$ tailless of length $3m$ if and only
if $g^{-1} \g g \in B(t_2t_1t_3)^mB$}.
Recall the operator $L_B = L_{t_2\sigma^2}$ defined by (\ref{LB}).
Further, $L_B$ on $X_\G$ can be interpreted as the adjacency matrix
on directed chambers $(C, e)$, where $e$ is a type $1$ edge of the
chamber $C$ in $X_\G$.

\begin{theorem}\label{numberofgalleries}
For $n \ge 1$, $\Tr L_B^{3n}$ counts the number of type $1$
tailless closed galleries in $X_\G$ of length $3n$.
\end{theorem}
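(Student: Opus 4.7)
The plan is to identify $L_B^{3n}$ with a single Hecke operator attached to one Bruhat double coset, then read off the trace on $L^2(\G \backslash G/B)$ as a count of based closed galleries. The text already records that $(L_B)^{3n} = (L_{t_2 t_1 t_3})^n$, obtained by iterated use of the Iwahori--Hecke relations (properties 1--4). Since $t_2 t_1 t_3$ is a translation in the affine Weyl group $\widetilde{A}_2$, the power $(t_2 t_1 t_3)^n$ remains a reduced expression of length $3n$ for every $n \ge 1$, so property 3 gives $(L_{t_2 t_1 t_3})^n = L_{(t_2 t_1 t_3)^n}$, which is the Hecke operator attached to the single double coset $B(t_2 t_1 t_3)^n B$. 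Equivalently, $(Bt_2 \sigma^2 B)^{3n}$ collapses to $B(t_2 t_1 t_3)^n B$ without any contribution from the quadratic relation.

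Next, for any Weyl element $w$, unfolding the definition of $L_w$ on $L^2(G/B)$ and summing over the $\G$-orbit of the target $B$-coset yields the matrix entry of $L_w$ on $L^2(\G \backslash G/B)$,
\[
[L_w]_{\G g_1 B,\, \G g_2 B} \;=\; \#\{\g \in \G \,:\, g_1^{-1} \g g_2 \in BwB\}.
\]
Taking the trace with $w = (t_2 t_1 t_3)^n$ then produces
\[
\Tr L_B^{3n} \;=\; \#\bigl\{(\G g B, \g) \in (\G \backslash G/B) \times \G \,:\, g^{-1} \g g \in B(t_2 t_1 t_3)^n B\bigr\}.
\]

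It remains to show this count equals the number of type $1$ tailless closed galleries of length $3n$ in $X_\G$. Each pair $(\G g B, \g)$ above determines a based homotopy class $\ka_\g(gB)$ whose associated Weyl word is $(t_2 t_1 t_3)^n$; since this is a reduced word of length $3n$ with consecutive index-differences all equal to $1 \bmod 3$, the gallery is of type $1$ and length $3n$, and by the direction convention fixed right after Proposition \ref{typeonetaillessgallery} it qualifies as a type $1$ tailless closed gallery of length $3n$. Taillessness in the sense of \S7.3 is automatic: since $\ell((t_2 t_1 t_3)^{2n}) = 6n$, the Bruhat-cell identity $(BwB)(BwB) = Bw^2 B$ with $w = (t_2 t_1 t_3)^n$ forces the concatenation of $\ka_\g(gB)$ with its $\g$-translate to be a geodesic gallery of length $6n$. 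Conversely, Proposition \ref{typeonetaillessgallery} together with the convention shows that every type $1$ tailless closed gallery of length $3n$ arises from exactly one such pair. The main obstacle will be justifying that $(t_2 t_1 t_3)^n$ is a reduced word of length $3n$ in the affine Weyl group for all $n$; once that is granted, the Hecke-algebra product collapses cleanly to the single double coset $B(t_2 t_1 t_3)^n B$, and the trace-count identification becomes immediate.
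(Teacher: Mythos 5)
Your argument follows essentially the same route as the paper's: identify $L_B^{3n}$ with $L_{(t_2t_1t_3)^n}$, observe that this is attached to a single Bruhat double coset because the word $(t_2t_1t_3)^n$ is reduced, and read the trace on $L^2(\G\backslash G/B)$ as a count of based closed galleries, invoking Proposition~\ref{typeonetaillessgallery} for the converse direction. The paper packages the counting by choosing explicit $B$-coset representatives $w_l$ of $Bt_2t_1t_3B$ and reading off matrix entries, while you write the entries directly as $\#\{\g : g_1^{-1}\g g_2 \in BwB\}$; these are the same argument in slightly different clothing.

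One point in your proposal is incorrect as stated, though it does not derail the conclusion. You claim that $t_2t_1t_3$ is a translation in the affine Weyl group, and deduce $\ell((t_2t_1t_3)^n)=3n$ from that. In fact $t_2t_1t_3$ is \emph{not} a translation: from the computation (\ref{t2t1t3andsquare}) it equals $\diag(1,\pi,\pi^2)$ composed with the transposition of the first two coordinates, so it has a nontrivial finite Weyl component; only its square $(t_2t_1t_3)^2 = \diag(1,1,\pi^3)$ is a genuine translation. Nevertheless the length claim $\ell((t_2t_1t_3)^n)=3n$ does hold, and can be obtained from what the paper already proves: the proof of Proposition~\ref{typeonetaillessgallery} shows $\ell((t_2t_1t_3)^{2m}) = 6m$, and then subadditivity of length ($\ell(w^{2m})\le 2\,\ell(w^m)\le 2\cdot 3m$) forces $\ell((t_2t_1t_3)^m)=3m$ for all $m$. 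With that substitution, your justification that the Hecke product collapses to the single double coset $B(t_2t_1t_3)^nB$ is sound, and the rest of the argument goes through.
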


\begin{proof}
Write $Bt_2t_1t_3B = \coprod_{1 \le l \le M} w_lB$ as a disjoint
union. As $B(t_2t_1t_3)^n B = (Bt_2t_1t_3B)^n$  and the length of
$(t_2t_1t_3)^n$ is $3n$, we have $B(t_2t_1t_3)^n B = \coprod_{1 \le
l_1,..., l_n \le M} w_{l_1} \cdots w_{l_n}B$. Consequently,
$\ka_\g(gB)$ is a type $1$ tailless closed gallery of length $3n$ in
$X_\G$ if and only if $ g^{-1} \g g$ lies in $w_{l_1} \cdots
w_{l_n}B$ for some $1 \le l_1,..., l_n \le M$, that is, $\g gB =
gw_{l_1} \cdots w_{l_n}B$. As we vary $\g$ and $gB$, this amounts to
counting, for each double coset $\G g B$, the number of $w_{l_1}
\cdots w_{l_n}$'s such that $\G g B = \G gw_{l_1} \cdots w_{l_n}B$,
and then total over all double cosets $\G \backslash G /B$.

On the other hand, represent $L_B^3 = L_{t_2t_1t_3}$ by a square
matrix with rows and columns parametrized by the characteristic
functions of $\G \backslash G/B = \coprod_{1 \le i \le N} \G g_i B$.
Then the $ij$ entry of $L_B^3$ is one if $\G g_j B = \G g_i w_l B$
for some $1 \le l \le M$, and zero otherwise. Therefore the trace of
the $n$th power of $L_B^3$ gives the number of type $1$ tailless
closed galleries in $X_\G$ of length $3n$.
\end{proof}

\subsection{The type $1$ gallery zeta function of $X_\G$} A type $1$ tailless closed gallery $\ka_\g(gB)$ is called {\it primitive}
if it is not a repetition of another closed gallery of shorter
length. If $\ka_\g(gB)$ is a primitive tailless type $1$ closed
gallery of length $n$, then so is the same closed gallery with a
different starting chamber. These galleries are said to be {\it
equivalent}. Denote by $[\ka_\g(gB)]$ the collection of the $n$
galleries equivalent to $\ka_\g(gB)$.

The type $1$ gallery zeta function of $X_\G$ is defined as an Euler
product:
\begin{eqnarray}\label{defineZ2}
Z_2(X_\G, u) = \prod_{\g \in [\G]} \prod_{[\ka_\g(gB)]} (1 -
u^{l(\ka_\g(gB))})^{-1}
\end{eqnarray}
where $[\ka_\g(gB)]$ runs through the equivalence classes of
primitive, tailless, type $1$ galleries in $[\g]_B$.

\begin{theorem}\label{rationalZ2} The type $1$ gallery zeta
function of $X_\G$ is a rational function, given by
\begin{eqnarray}\label{L2asdetLB}
Z_2(X_\G, u) = \frac{1}{\det (I - L_B u)}.
\end{eqnarray}
\end{theorem}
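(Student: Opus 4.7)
The plan is to match the logarithmic derivatives of both sides, since both evaluate to $1$ at $u = 0$. I would first unfold the Euler product defining $Z_2(X_\G, u)$. A primitive equivalence class of type $1$ tailless closed galleries of length $l$ contains exactly $l$ based galleries (one for each cyclic shift of the starting chamber), and every such closed gallery is uniquely a $k$-fold repetition of a primitive one. Standard manipulation yields
\[
u \frac{d}{du} \log Z_2(X_\G, u) \;=\; \sum_{[\ka_\g(gB)]\ \text{primitive}} \sum_{k \ge 1} l_C\, u^{k l_C} \;=\; \sum_{n \ge 1} \widetilde N_n\, u^n,
\]
where $\widetilde N_n$ is the total number of (not necessarily primitive) type $1$ tailless closed galleries of length $n$ counted with a base chamber. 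By Proposition \ref{typeonetaillessgallery}, $\widetilde N_n = 0$ whenever $n \not\equiv 0 \pmod{3}$, while Theorem \ref{numberofgalleries} identifies $\widetilde N_{3m} = \Tr L_B^{3m}$.

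For the right hand side, using $\log \det = \Tr \log$ gives
\[
u \frac{d}{du} \log \frac{1}{\det(I - L_B u)} \;=\; \sum_{n \ge 1} \Tr(L_B^n)\, u^n.
\]
Matching this with the previous expansion then reduces the theorem to the single assertion $\Tr L_B^n = 0$ for every $n \not\equiv 0 \pmod{3}$, since for $n = 3m$ the traces already agree with $\widetilde N_{3m}$ by Theorem \ref{numberofgalleries}.

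For this vanishing I would exploit hypothesis (I). Since $\ord_\pi \det \G \subset 3\Z$, the type function $\tau(gB) := \ord_\pi \det g \pmod{3}$ descends to $\G \backslash G/B$, producing a decomposition $L^2(\G \backslash G/B) = V_0 \oplus V_1 \oplus V_2$. A direct computation gives $\ord_\pi \det t_2 = 0$ and $\ord_\pi \det \sigma = 1$, whence $\ord_\pi \det (t_2 \sigma^2) \equiv 2 \pmod{3}$; the operator $L_B = L_{t_2 \sigma^2}$ therefore carries $V_i$ into $V_{i+2 \pmod{3}}$, so $L_B^n$ preserves the decomposition only when $3 \mid n$ and otherwise has all diagonal blocks equal to zero, forcing $\Tr L_B^n = 0$. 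Combining this with the two series above and the common value $1$ at $u = 0$ yields the identity (\ref{L2asdetLB}). The main obstacle is getting this last step clean: the type-shift argument breaks down without assumption (I), so the vanishing is not formal but genuinely relies on the global hypothesis on $\G$.
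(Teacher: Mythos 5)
Your proof is correct and follows essentially the same route as the paper: compare logarithmic derivatives, recognize the gallery generating function via Proposition \ref{typeonetaillessgallery} and Theorem \ref{numberofgalleries}, and exponentiate. The one step you justify explicitly---that $\Tr L_B^n = 0$ for $3 \nmid n$---is indeed required to reconcile $\sum_{m\ge 1} \Tr(L_B^{3m}) u^{3m}$ with $-u\frac{d}{du}\log\det(I-L_Bu) = \sum_{n\ge 1}\Tr(L_B^n)u^n$, and the paper's proof passes over it (it surfaces only in the remark afterwards that $\det(I-L_Bu)$ is a polynomial in $u^3$); as a small correction, because the convolution operator shifts the support of a function opposite to the direction of the argument translation, $L_{t_2\sigma^2}$ carries $V_i$ into $V_{i+1}$ rather than $V_{i+2}$, but the vanishing of $\Tr L_B^n$ for $3\nmid n$ is unaffected.
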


\begin{proof} We compute
\begin{eqnarray*}
u\frac{d}{du}\log Z_2(X_\G, u) &=& u\frac{d}{du}\bigg( \sum_{\g
\in [\G]} \sum_{[\ka_\g(gB)]} \sum_{m \ge
1}\frac{u^{l(\ka_\g(gB))m}}{m}\bigg) \\
&=& \sum_{\g \in [\G]} \sum_{[\ka_\g(gB)]}~ \sum_{m \ge
1}l(\ka_\g(gB)) u^{l(\ka_\g(gB))m} \\
&=& \sum_{\g \in [\G]} \sum_{\substack{\ka_\g(gB)
\text{primitive},\\\text{ tailless, type $1$}}} ~\sum_{m \ge 1}
u^{l(\ka_\g(gB))m}
\end{eqnarray*}
since there are $l(\ka_\g(gB))$ galleries in $[\g]_B$ equivalent
to $\ka_\g(gB)$. As we get all tailless type $1$ galleries by
repeating the primitive ones, the above can be rewritten as
\begin{eqnarray*}
u\frac{d}{du}\log Z_2(X_\G, u) &=& \sum_{\g \in [\G]}
\sum_{\ka_\g(gB) \text{tailless, type $1$}} u^{l(\ka_\g(gB))}\\
&=& \sum_{m \ge 1} \Tr L_B^m u^m \qquad \qquad \qquad  \text{by
Proposition
\ref{numberofgalleries}} \\
&=& \Tr((1 - L_Bu)^{-1}L_Bu) = \Tr\bigg(-u \frac{d}{du}\log(I -
L_Bu)\bigg).
\end{eqnarray*}
Therefore $\log Z_2(X_\G, u)$ differs from $-\Tr \log(1 - L_Bu)$ by
a constant. Exponentiating both functions, using Lemma 3 of
\cite{ST} and comparing the constants, we get the desired
conclusion.
\end{proof}

\begin{remark} By Proposition \ref{typeonetaillessgallery}, the lengths of the
closed galleries occurring in the gallery zeta function are
multiples of $3$, so $\det(1 - L_Bu)$ is a polynomial in $u^3$.
\end{remark}

\section{Edge zeta functions of $X_\G$}
\subsection{The type $1$ edge zeta function of $X_\G$}

The intersection of the stabilizers in $G$ of $v_1 = K$ and $v_2 =
\sigma K$ is the group $E$ consisting of elements $k \in K$ whose
third row is congruent to $(0, 0, *)$ mod $\pi$. Therefore $E$
stabilizes the type $1$ edge $E_0 : v_1 \rightarrow v_2$.
Further, $gE_0 \mapsto gE$ is a bijection between the type $1$ edges
on $\B$ and the coset space $G/E$.

We have
$$(t_2\sigma^2)^2 = \left( \begin{matrix} \pi &  &  \\
& \pi&  \\ & &\pi^2 \end{matrix}\right) = \left( \begin{matrix} 1 &  &  \\
& 1&  \\ & &\pi \end{matrix}\right) ~(\text{in} ~G)$$ and
$$E(t_2\sigma^2)^2E = E\left( \begin{matrix} 1 &  &  \\
& 1&  \\ & & \pi \end{matrix}\right)E = \coprod_{x, ~y \in
~\oo/\pi\oo}
\left( \begin{matrix} 1 &  &  \\
& 1&  \\ x\pi & y\pi &\pi \end{matrix}\right)E.$$ Let $L_E$ be the
operator which sends a function $f$ in $L^2(G/E)$ to the function
$L_Ef$ whose value at $gE$ is given by
$$ L_Ef (gE) = \sum_{g'E \subset E(t_2\sigma^2)^2E} f(gg'E) =
\sum_{x, ~y \in ~\oo/\pi\oo} f\bigg(g\left( \begin{matrix} 1 &  &  \\
& 1&  \\ x\pi & y\pi &\pi \end{matrix}\right)E\bigg).$$ Observe
that
 left multiplications by the elements $\left( \begin{matrix} 1 &  &  \\
& 1&  \\ x\pi & y\pi &\pi \end{matrix}\right)$ map the vertex
$v_1=K$ to $v_2 = \sigma K = \diag(1, 1, \pi)K$ and $v_2K$ to its
type $1$ neighbors which are not adjacent to $v_1$. In other words,
$L_E$ may be interpreted as the \lq\lq edge adjacency operator" on
the set of type $1$ edges $G/E$ of $\B$ such that the neighbors of a
type $1$ edge $v \rightarrow v'$ are the $q^2$ type $1$ edges $v'
\rightarrow v''$ with $v''$ not adjacent to $v$.

Regard $L_E$ as an operator on the type $1$ edges in $X_\G$. Then
$\Tr L_E^n$ counts the number of type $1$ tailless cycles of length
$n$ in $X_\G$. Similar to the type $1$ gallery zeta function, we
define the type $1$ edge zeta function on $X_\G$ to be
\begin{eqnarray}\label{defLZ1}
Z_1(X_\G, u) = \prod_{\g \in [\G]} \prod_{[\ka_\g(gK)]} (1 -
u^{l_A(\ka_\g(gK))})^{-1},
\end{eqnarray}
where $[\ka_\g(gK)]$ runs through the classes of equivalent
primitive tailless type $1$ cycles in $X_\G$. The same argument as
the proof of Theorem \ref{rationalZ2} shows

\begin{theorem}\label{rationalZ1}
The type $1$ edge zeta function of $X_\G$ is a rational function,
given by
\begin{eqnarray}\label{L2asdetLB}
Z_1(X_\G, u) = \frac{1}{\det (I - L_E u)}.
\end{eqnarray}
\end{theorem}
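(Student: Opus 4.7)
The plan is to mimic the proof of Theorem \ref{rationalZ2} almost verbatim, with the edge operator $L_E$ playing the role of the chamber operator $L_B$ and equivalence classes of primitive tailless type $1$ closed cycles replacing equivalence classes of primitive tailless type $1$ closed galleries. The crucial combinatorial input has already been provided in the paragraph preceding the theorem: namely, $\Tr L_E^n$ counts the number of tailless type $1$ closed cycles in $X_\G$ of length $n$, where for a type $1$ cycle the edge-count, the geometric length, and the algebraic length all coincide. Given this, the proof of Theorem \ref{rationalZ1} reduces to a bookkeeping argument via the logarithmic derivative.

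Concretely, I would begin by taking the logarithmic derivative of (\ref{defLZ1}), obtaining
\[
u\frac{d}{du}\log Z_1(X_\G,u) = \sum_{\g\in[\G]}\ \sum_{[\ka_\g(gK)]}\ \sum_{m\ge 1} l_A(\ka_\g(gK))\, u^{l_A(\ka_\g(gK))\,m},
\]
where the middle sum is over equivalence classes of primitive tailless type $1$ cycles. Next I would use two structural facts to rewrite the right-hand side as $\sum_{n\ge 1}\Tr(L_E^n)\,u^n$. The first is that each equivalence class $[\ka_\g(gK)]$ contains exactly $l_A(\ka_\g(gK))$ distinct based cycles, obtained by cyclically shifting the starting vertex along the cycle (the factor $l_A(\ka_\g(gK))$ in the sum above absorbs this into a sum over based primitive cycles). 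The second is that every tailless type $1$ based cycle is obtained uniquely as a positive power of some primitive one, so grouping by the underlying primitive replaces the sum by $\sum_{n\ge 1} N_n u^n$, where $N_n$ is the total number of tailless type $1$ closed cycles of algebraic length $n$; and by the cited trace interpretation, $N_n=\Tr L_E^n$. I then recognize
\[
\sum_{n\ge 1}\Tr(L_E^n)\,u^n = \Tr\bigl(L_E u (I-L_E u)^{-1}\bigr) = -\Tr\!\left(u\frac{d}{du}\log(I-L_E u)\right).
\]
Exponentiating and applying Lemma 3 of \cite{ST} (the identity $\exp\Tr\log = \det$ for matrices over $\mathbb C$) gives $Z_1(X_\G,u) = c/\det(I-L_E u)$ for some constant $c$, and comparing values at $u=0$ (both sides equal $1$) fixes $c=1$.

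The step requiring the most care is the combinatorial bijection used to identify $\sum N_n u^n$ with $\sum \Tr(L_E^n)\,u^n$: one must verify that distinct primitive tailless type $1$ classes contribute disjointly to the powers, i.e., that no tailless type $1$ cycle is a common power of two inequivalent primitives. This follows from the fixed-point-free action of $\G$ on $\B$ (\S3.1) together with the uniqueness of roots in $\G$ for type $1$ elements observed in \S\ref{primitivecycles}: if $\kappa_\g(gK)^r$ and $\kappa_\beta(hK)^s$ agree as based cycles in $X_\G$, then lifting to $\B$ forces $\g^r gK = \beta^s hK$ in $\B$ for some identification, and fixed-point freeness propagates this equality back to an equivalence of the two primitive classes. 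Apart from this verification, the proof is entirely parallel to that of Theorem \ref{rationalZ2} and introduces no new difficulties.
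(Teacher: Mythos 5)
Your proof is correct and is essentially the same argument the paper uses: the paper's proof of Theorem~\ref{rationalZ1} is the single line ``the same argument as the proof of Theorem~\ref{rationalZ2} shows,'' and you have faithfully transcribed that argument, substituting $L_E$ for $L_B$ and citing the trace interpretation of $L_E$ given in the paragraph before the theorem in place of Proposition~\ref{numberofgalleries}. Your closing caveat about disjointness of primitive powers is a correct observation, though it is really a bookkeeping fact about periods of closed walks rather than something needing uniqueness of roots in $\G$; the paper treats it as implicit in exactly the same way in the proof of Theorem~\ref{rationalZ2}.
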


\subsection{Boundaries of tailless type $1$ closed galleries}

We characterize the boundary of a type $1$ tailless closed
gallery. Recall from Proposition \ref{typeonetaillessgallery} that
the length of such a gallery is a multiple of $3$. For $\g \in
[\G]$, let
\begin{equation}\label{geotaillessset}
\Delta_G([\g]) = \{gK \in G/K ~|~ l_G(\ka_\g(P_\g gK)) =
l_G([\g])\}.
\end{equation}
\noindent By Corollaries \ref{typeoftailless} and
\ref{rankonetailless}, tailless cycles in $[\g]$ are algebraically
tailless, thus $\Delta_G([\g]) \subseteq \Delta_A([\g])$;
furthermore, the two sets agree when $[\g]$ has type $1$.

\begin{proposition}\label{boundaryofgallery}
Let $\ka_\g(gB)$ be a type $1$ tailless closed gallery of length
$3m$ in $X_\G$ with the chamber sequence
$$ gB = g_1B \to g_2B \to \cdots \to g_{3m}B \to g_{3m+1}B = \g g_1B.$$

\begin{itemize}
\item[(1)] Suppose $3m = 3\cdot 2n$ is even. Then up to
equivalence the boundary of $\ka_\g(gB)$ consists of two tailless
type $1$ edge cycles $gE= g_1E \to g_3E \to \cdots \to g_{3m-1}E \to
g_{3m+1}E = \g g_1E = \g gE$ and $g_2E \to g_4E \to \cdots \to
g_{3m}E \to g_{3m+2}E = \g g_2E$. As vertex cycles, they are
$\ka_\g(gK)$ and $\ka_\g(g_2K)$, both of type $(3m/2, 0)$.
Consequently, $[\g]$ has type $(3m/2, 0)$.

\item[(2)] Suppose $3m =3(2n+1)$ is odd. Then up to equivalence
the boundary of $\ka_\g(gB)$
 consists of one tailless type $1$ edge cycle $gE = g_1E \to g_3E \to
\cdots \to g_{6n+1}E \to g_{6n+3}E \to \g g_2E \to \g g_4E\to \cdots
\to \g g_{6n+2}E \to \g^2g_1E = \g^2 gE$. As a vertex cycle, this is
$\ka_{\g^2}(gK)$, of type $(3m, 0)$. In this case, $[\g]$ has type
$((3m-1)/2, 1)$.
\end{itemize}

\noindent In both cases, all vertices contained in the gallery
$\ka_\g(gB)$ belong to the set $\Delta_G([\g])$. Moreover, each
chamber in $\kappa_\g(gB)$ contains a unique type $1$ edge which
starts a tailless cycle in $[\g]$.
\end{proposition}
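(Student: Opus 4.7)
My plan is to translate the gallery structure into algebraic identities in $G$ using Proposition~\ref{typeonetaillessgallery}, which gives $g^{-1}\g g \in B(t_2 t_1 t_3)^m B$, combined with the two core computations $(t_2\sigma^2)^2 = \diag(1,1,\pi)$ and $(t_2 t_1 t_3)^2 = \diag(1,1,\pi^3)$ in $G$. From the latter one reads $(t_2 t_1 t_3)^{2n}\in T_{3n,0}$ and $(t_2 t_1 t_3)^{2n+1}\in T_{3n+1,1}$, which yields the asserted types $(3m/2, 0)$ and $((3m-1)/2, 1)$ of $[\g]$ in the even and odd cases, respectively.

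The geometric core is the identity $g_{i+2} K = g_i\sigma K$ for every $i$. Since $L_B^2 = L_{t_2 t_1 \sigma}$ and the literal matrix identity $t_2 t_1 \sigma = \diag(1,1,\pi)$ holds, we have $g_{i+2}B \subset g_i B \diag(1,1,\pi) B$; combined with the lattice identity $\diag(1,1,\pi)K = \sigma K$ and the fact that $B$ stabilises $\sigma K$, the formula $g_{i+2}K = g_i\sigma K$ holds unambiguously regardless of the $B$-coset chosen for $g_{i+2}$. Iterating gives $g_{2j+1}K = g_1 \diag(1,1,\pi^j)K$ and $g_{2j}K = g_2 \diag(1,1,\pi^{j-1})K$ in $\B$, so the odd- and even-indexed sequences trace out straight type-$1$ paths in $\B$.

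For the even case $m = 2n$, the odd-indexed sequence closes at $g_{6n+1}K = g_1 \diag(1,1,\pi^{3n})K = \g g_1 K$ after $3n=3m/2$ type-$1$ edges, realising the tailless geodesic $\ka_\g(g_1 K)$; the even-indexed sequence similarly realises $\ka_\g(g_2 K)$, and the two remain disjoint because each returns to its own starting vertex modulo $\g$. For the odd case $m=2n+1$, the odd sequence reaches $g_{6n+3}K$ and the next type-$1$ step lands at $g_1\diag(1,1,\pi^{3n+2})K = \g g_2 K$, so the odd-indexed and $\g$-shifted even-indexed sequences merge. The combined cycle closes only at $\g^2 g_1 K$, producing a single type-$1$ cycle of length $3m$; by the remark after Theorem~\ref{rankoneminlength}, $[\g^2]$ has type $(3m,0)$, so this cycle is the tailless geodesic $\ka_{\g^2}(g_1 K)$.

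To finish, I would verify that consecutive edges $g_iE, g_{i+2}E$ are genuine neighbours in the edge graph, which reduces to the lattice inequality $\diag(1,1,\pi)\sigma K \ne \sigma^2 K$, a short direct calculation. Taillessness is then automatic because the realised length matches $l_G([\g])$ or $l_G([\g^2])$. That every gallery vertex lies in $\Delta_G([\g])$ is immediate in the even case since each such vertex appears on one of the two constructed tailless cycles, and in the odd case follows from the fact that the gallery sits inside the apartment containing the axis of $\g$, combined with Corollaries~\ref{typeoftailless} and~\ref{rankonetailless} which equate $\Delta_G([\g])$ with $\Delta_A([\g])$ in the relevant type-$1$ situations. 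Uniqueness of the type-$1$ edge per chamber starting a tailless cycle in $[\g]$ comes from the fact that the distinguished edge $g_iE$ is singled out by the $B$-coset $g_iB$, while the rotated representatives $g_i\sigma B$ and $g_i\sigma^2 B$ initiate different $L_B$-walks not realised in this gallery. The main obstacle will be the odd-case merger: one must track simultaneously the $B$-coset identity $g_{6n+5}B = \g g_2 B$ and the lattice structure to ensure the merged cycle is genuinely an edge cycle rather than a path, and verify the uniqueness statement for $[\g]$ (not $[\g^2]$) given that tailless cycles in $[\g]$ are themselves not of type~$1$ in the odd case.
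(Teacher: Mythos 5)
The central geometric step of your argument --- the iterated formula $g_{2j+1}K = g_1\diag(1,1,\pi^j)K$ --- does not follow from the one-step identity $g_{i+2}K = g_i\sigma K$, and is in fact false. The one-step identity is correct because $g_{i+2}K \subset g_iB\diag(1,1,\pi)K = g_iB\sigma K = g_i\sigma K$, using that $B$ fixes the vertex $\sigma K$. But $B$ does \emph{not} fix $\diag(1,1,\pi^2)K$: for $b = \left(\begin{smallmatrix} 1 & & \\ & 1 & \\ \pi & & 1\end{smallmatrix}\right)\in B$ one has $\diag(1,1,\pi^{-2})b\diag(1,1,\pi^2)$ with lower-left entry $\pi^{-1}\notin\oo$, and a direct lattice check shows $b\diag(1,1,\pi^2)K \ne \diag(1,1,\pi^2)K$. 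Since $g_5K = g_3\sigma K$ depends on the $B$-coset $g_3B$, not merely the $K$-coset $g_3K = g_1\sigma K$, one can realize $g_3B = b\diag(1,1,\pi)B$ inside a valid $L_B$-walk and get $g_5K = b\diag(1,1,\pi^2)K \ne g_1\diag(1,1,\pi^2)K$. Consequently the reduction of the ``neighbor'' verification to the single inequality $\diag(1,1,\pi)\sigma K \ne \sigma^2 K$ is also not valid. What is true and what the paper uses is the double-coset containment: from $g_{2j-1}^{-1}g_{2j+1} \in B(t_2\sigma^2)^{2}B$ one gets $g_{2j-1}^{-1}g_{2j+3}\in B(t_2\sigma^2)^4B \subset K\diag(1,1,\pi^2)K = T_{2,0}$, so the two vertices are at type-$1$ distance $2$ and cannot bound a common chamber; equivalently, $g_{2j+1}E = g_{2j-1}wE$ with $wE\subset E(t_2\sigma^2)^2E$ is a neighbor of $g_{2j-1}E$ by the very definition of $L_E$. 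Your approach should be rewritten in terms of these double cosets rather than explicit lattice representatives.

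There is a second gap in the odd case: you invoke the remark after Theorem~\ref{rankoneminlength} to conclude $[\g^2]$ has type $(3m,0)$, but that remark assumes $[\g]$ already has type $(n,1)$, which is exactly what needs proving here. The paper's argument runs in the opposite direction: one first computes directly that $g^{-1}\g^2 g \in K(t_2t_1t_3)^{2m}K$ (using taillessness of the gallery so that $B(t_2t_1t_3)^mB\cdot B(t_2t_1t_3)^mB = B(t_2t_1t_3)^{2m}B$), hence $\ka_{\g^2}(gK)$ has type $(3m,0)$; one similarly computes that $g^{-1}\g g \in K(t_2t_1t_3)^m K \subset T_{3n+1,1}$; and then one shows $\ka_\g(gK)$ is tailless by contradiction: if $l_G([\g]) \le 3n+1$ then $l_G([\g^2]) \le 6n+2 < 6n+3 = l_G(\ka_{\g^2}(gK))$, contradicting taillessness of $\ka_{\g^2}(gK)$. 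Only then does $[\g]$ have type $((3m-1)/2,1)$. Your own closing remark flags this odd-case merger as unresolved; indeed it is, and the contradiction argument via $\g^2$ is the missing ingredient. You are correct that the computations $(t_2\sigma^2)^2 = \diag(1,1,\pi)$ and $(t_2t_1t_3)^2 = \diag(1,1,\pi^3)$ in $G$ drive everything, and Proposition~\ref{typeonetaillessgallery} is the right entry point, but the execution must stay at the level of $B$-double cosets rather than asserting pointwise lattice equalities that do not hold.
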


\begin{remark} The element $\g$ in case (2) is ramified rank-one
split, in view of Theorem \ref{rankoneminlength}, (1).
\end{remark}

\begin{proof} Since the edge sequences we are
considering come from every other term of the chamber sequence,
they are obtained by right multiplications by suitable $B$-coset
representatives of $B(t_2\sigma^2)^2B = \sum_{1 \le l \le q^2}
w_lB$. If the closed gallery has even length $6n$, then there are
$w_{l_1},..., w_{l_{3n}}$ with $1 \le l_1,..., l_{3n} \le q^2$ so
that for $1 \le j \le 3n$ we have $g_{2j+1}B = g_{2j-1}w_{l_j}B$.
 As explained at the beginning of the previous
section, each $g_jE$ is a type $1$ edge of the chamber $g_jB$, and
$g_{2j+1}E = g_{2j-1}w_{l_j}E$ is adjacent to $g_{2j-1}E$.
Therefore $g_1E \to g_3E \to \cdots \to g_{6n-1}E \to g_{6n+1}E =
\g g_1E$ is a type $1$ tailless edge cycle in $X_\G$. The same
holds for $g_2E \to g_4E \to \cdots \to g_{6n}E \to \g g_2E$.

To see the type of the vertex cycles $\ka_\g(gK)$ and
$\ka_\g(g_2K)$, note that $g_1w_{l_1} \cdots w_{l_{3n}}B = \g g_1B$
implies that $g_1^{-1} \g g_1 \in w_{l_1} \cdots w_{l_{3n}}B \subset
B(t_2\sigma^2)^{6n}B = B(t_2t_1t_3)^{2n}B \subset
K(t_2t_1t_3)^{2n}K$. Similarly, we also have $g_2^{-1} \g g_2 \in
K(t_2t_1t_3)^{2n}K$. A straightforward computation gives
\begin{eqnarray}\label{t2t1t3andsquare}
 ~~~~~~~~~~~~t_2t_1t_3 = \left( \begin{matrix}  & \pi^{-1} &  \\
1 & &  \\ & &\pi \end{matrix}\right) \quad \text{and} \quad
(t_2t_1t_3)^2
= \left( \begin{matrix} 1 &  &  \\
& 1&  \\ & &\pi^3 \end{matrix}\right) \quad \text{in} ~G.
\end{eqnarray}
This shows that $\ka_\g(gK)$ and $\ka_\g(g_2K)$ both have type $(3n,
0)$. As they are tailless type $1$ cycles, we know that $[\g]$ has
the same type and the vertices on $\ka_\g(gK)$ and $\ka_\g(g_2K)$,
that is, the vertices contained in the gallery $\ka_\g(gB)$, all
belong to $\Delta_G([\g])$.

If, however, the gallery has odd length $3m=3(2n+1)$, then the
boundary sequence is $g_1E \to g_3E \to \cdots \to g_{6n+1}E \to
g_{6n+3}E \to g_{6n+5}E = \g g_{2}E \to \g g_4E \to \cdots \to \g
g_{6n+2}E \to \g g_{6n+4}E = \g^2 g_1E$. The same argument shows
that it is a tailless type $1$ edge cycle in $X_\G$, and as a vertex
cycle, it is $\ka_{\g^2}(gK)$. Further, we have $g^{-1} \g^2 g \in
K(t_2t_1t_3)^{2m}K$. Therefore $\ka_{\g^2}(gK)$ has type $(3m, 0)$
by (\ref{t2t1t3andsquare}). Since $m$ is odd, $g^{-1}\g g \in
K(t_2t_1t_3)^{m}K$ has type $(3n+1, 1)$. If $\ka_\g(gK)$ is not
tailless in $[\g]$, then $l_G([\g]) \le 3n+1$, which in turn implies
$l_G([\g^2]) \le 6n+2$, contradicting $l_G(\ka_{\g^2}(gK)) =
l_G[\g^2] = 6n+3$ since $\ka_{\g^2}(gK)$ is tailless. Thus
$\ka_\g(gK)$ is tailless so that $[\g]$ has type $(3n+1, 1)$. This
also shows that the vertices in the gallery $\ka_\g(gB)$ lie in
$\Delta_G([\g])$.

Finally, the unique type $1$ edge of each chamber which starts a
cycle in $[\g]$ is the one which shows up in the edge sequences in
(1) and (2), respectively.
\end{proof}

The proposition above says that if $[\g]_B$ contains a tailless type $1$ closed gallery, then either $[\g]$ has type $(3n, 0)$, or it has
type $(3n+1, 1)$. Further, each chamber of such a gallery has its
vertices contained in the set $\Delta_G([\g])$ with a unique type $1$ edge which starts a tailless cycle in $[\g]$. Now we show that
the last statement characterizes the chambers which start a tailless
type $1$ closed gallery in $X_\G$.

Given $[\g]$ with type as described above, let $C$ be a chamber
whose three vertices are contained in the set $\Delta_G([\g])$  with
a unique type $1$ edge $E'$ which is the starting edge of a tailless
cycle in $[\g]$. Initially, the chamber $C$ has three possible
labels: $gB$, $g\sigma B$ and $g\sigma^2 B$. The edge $E'$ then
determines the unique labeling, say, $gB$ so that $E'$ is labeled as
$gE$. The three vertices of $gB$ are $gK$, $g\sigma K$ and $g
\sigma^2 K$.
 Denote by $g \A$ the apartment containing $gB$ and $\g gB$. Up to translation
 by an element in $B$, we may assume that $\A$ is the standard apartment whose
chambers are represented by $DS_3 B$, where $D$ is the group of
diagonal matrices in $G$ and $S_3$ is the subgroup of permutation
matrices in $G$. Therefore $g^{-1} \g g = M sb$ for some $M \in D$,
$s \in S_3$ and $b \in B$. The cycles in $[\g]$ starting at the
vertices of $C$ are tailless and have the same type and length as
$[\g]$.

Case (I). $[\g]$ has type $(3n, 0)$. We have, by assumption, that
$g^{-1}\g g$, $\sigma^{-1}g^{-1}\g g \sigma$ and $\sigma g^{-1}\g g
\sigma^{-1}$ all lie in $K\diag(1,1,\pi^{3n})K$. Therefore $M =
\diag(1, 1, \pi^{3n})$ from $g^{-1}\g g \in K\diag(1,1,\pi^{3n})K$.
Writing $\sigma = \diag(1, 1, \pi) s_3$ with $s_3 \in S_3$, we
proceed to determine $s$ using the other two conditions. Since
\begin{eqnarray*}
\sigma^{-1}g^{-1}\g g \sigma &=& s_3^{-1}\diag(1, 1,
\pi^{-1})\diag(1, 1, \pi^{3n}) s \sigma b' \qquad \text{since}
~B\sigma = \sigma B \\
&=& s_3^{-1}\diag(1, 1, \pi^{-1})\diag(1, 1, \pi^{3n}) s ~\diag(1,
1, \pi) s_3 b'
\end{eqnarray*} and $s \diag(1, 1, \pi)$ is $\diag(\pi, 1, 1)s$
or $\diag(1, \pi, 1) s$ or $\diag(1, 1, \pi) s$ depending on the the
first, second, or third row of $s$ is $(0 ~0 ~1)$, in order that
$\sigma^{-1}g^{-1}\g g \sigma \in K\diag(1,1,\pi^{3n})K$, we must
have the third row of $s$ being $(0 ~0 ~1)$. Similarly, $\sigma
g^{-1}\g g \sigma^{-1} \in K\diag(1,1,\pi^{3n})K$ implies the first
row of $s$ should be $(1 ~0 ~0)$. Therefore $s$ is the identity
matrix and hence $g^{-1} \g g = \diag(1,1, \pi^{3n})b$, showing that
$\ka_\g(gB)$ is a tailless type $1$  closed gallery of length $6n$.

Case (II) $[\g]$ has type $(3n+1, 1)$. Since $\Delta_G([\g]) \subset
\Delta_G([\g^2])$ and $[\g^2]$ has type $1$, we may use the result
above to conclude that there is a labeling of $C$ by $gB$ such that
$\ka_{\g^2}(gB)$ is a tailless type $1$ gallery of length $2(6n+
3)$. In other words, $g^{-1}\g^2 g \in B(t_2 t_1 t_3)^{2(2n+1)}B$.
Since the vertices of $gB$ are in $\Delta_G([\g])$, we know $g^{-1}
\g g \in K(t_2t_1t_3)^{2n+1}K$. This condition allows us to write
$g^{-1} \g g = M sb$ with $M = (t_2t_1t_3)^{2n+1}$, $s \in S_3$ and
$b \in B$. A similar argument as in Case (I) shows that the
remaining two conditions force $s$ to be the identity matrix.
Therefore $g^{-1}\g g \in B(t_2t_1t_3)^{2n+1}B$, implying that
$\ka_\g(gB)$ is a tailless type $1$ closed gallery.

We record the above result in

\begin{proposition}\label{cycletogallery}
Suppose $[\g]$ has type $(3n, 0)$ or
 it is ramified rank-one split of type $(3n+1, 1)$.
Then for any chamber $C$ whose vertices belong to $\Delta_G([\g])$ with a unique
type $1$ edge which starts a tailless cycle in $[\g]$, there is a
unique labeling of $C$ by $gB$ such that $\kappa_\g(gB)$ is a
tailless type $1$ closed gallery of even length $6n$ if $[\g]$ has
type $(3n, 0)$, or odd length $3(2n+1)$ if $[\g]$ has type
$(3n+1,1)$.
\end{proposition}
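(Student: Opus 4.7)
The plan is to convert the geometric hypothesis on $C$ into an algebraic identity in the standard apartment and read off from it that $\ka_\g(gB)$ is a tailless type $1$ closed gallery of the predicted length. First I would use the distinguished type $1$ edge $E'$ to pin down a $B$-coset representative of $C$: a chamber admits three labels $gB$, $g\sigma B$, $g\sigma^2 B$, corresponding to the three outgoing type $1$ edges $gE$, $g\sigma E$, $g\sigma^2 E$ at its three vertices, so imposing $E'=gE$ singles out a unique $g$ up to right multiplication by $B$. Since $gB$ and $\g gB$ lie in a common apartment, I may then translate by $B$ to make this apartment the standard one represented by $DS_3 B$, and write $g^{-1}\g g = M s b$ with $M\in D$ diagonal, $s\in S_3$, and $b\in B$. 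The task reduces to determining $M$ and showing $s$ is the identity.

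For the case $[\g]$ of type $(3n,0)$, the hypothesis that all three vertices $gK$, $g\sigma K$, $g\sigma^2 K$ lie in $\Delta_G([\g])$ translates into three Cartan-type constraints: $g^{-1}\g g$, $\sigma^{-1}g^{-1}\g g\,\sigma$, and $\sigma\, g^{-1}\g g\,\sigma^{-1}$ all lie in $K\diag(1,1,\pi^{3n})K$. The first forces $M=\diag(1,1,\pi^{3n})$. Writing $\sigma=\diag(1,1,\pi)\,s_3$, I would expand the other two conjugates and use the identity that $s\cdot\diag(1,1,\pi)$ equals $\diag(\pi,1,1)s$, $\diag(1,\pi,1)s$, or $\diag(1,1,\pi)s$ according to whether the first, second, or third row of $s$ is $(0\ 0\ 1)$; only the last choice is compatible with the Cartan bound, so the third row of $s$ is $(0\ 0\ 1)$, and the symmetric check on the other conjugate pins the first row to $(1\ 0\ 0)$. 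Hence $s=\mathrm{id}$, so $g^{-1}\g g\in B\diag(1,1,\pi^{3n})B = B(t_2t_1t_3)^{2n}B$, and the identification $(t_2\sigma^2)^{6n}=(t_2t_1t_3)^{2n}$ in $G$ exhibits $\ka_\g(gB)$ as a tailless type $1$ closed gallery of length $6n$.

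For the ramified rank-one split case of type $(3n+1,1)$, I would bootstrap via $\g^2$. The inclusion $\Delta_G([\g])\subseteq \Delta_G([\g^2])$ combined with the remark following Theorem \ref{rankoneminlength} (which gives $[\g^2]$ of type $(3(2n+1),0)$) reduces the problem to the case already handled, applied to $\g^2$: the labeling $gB$ forced by $E'$ must satisfy $g^{-1}\g^2 g\in B(t_2t_1t_3)^{2(2n+1)}B$. Since the vertices of $gB$ lie in $\Delta_G([\g])$, I can write $g^{-1}\g g = M s b$ with $M=(t_2t_1t_3)^{2n+1}$, and repeating the two conjugation checks on $\sigma^{\pm 1} g^{-1}\g g\,\sigma^{\mp 1}$ with the same row analysis of $s$ forces $s=\mathrm{id}$. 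This gives $g^{-1}\g g\in B(t_2t_1t_3)^{2n+1}B$ and the desired gallery of length $3(2n+1)$.

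The main obstacle will be the combinatorial case analysis on $s\in S_3$ in the type $(3n,0)$ step: I must verify that each of the five non-identity permutations is incompatible with at least one of the two conjugation conditions, so that only $s=\mathrm{id}$ survives both constraints. Once this is done, the rank-one split case piggybacks cleanly on the split case via $\g^2$, and uniqueness of the labeling is automatic from the uniqueness of the distinguished type $1$ edge $E'$.
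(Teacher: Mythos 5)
Your proposal is correct and reproduces the paper's argument step for step: fix the labeling $gB$ by the distinguished type~$1$ edge, decompose $g^{-1}\g g = Msb$ in the standard apartment, deduce $s = \mathrm{id}$ from the three Cartan constraints on $g^{-1}\g g$ and its conjugates by $\sigma^{\pm 1}$ via the row analysis of $s\,\diag(1,1,\pi)$, and reduce the ramified rank-one split type~$(3n+1,1)$ case to the type~$(3n,0)$ case through $\g^2$ and the containment $\Delta_G([\g]) \subseteq \Delta_G([\g^2])$.
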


\subsection{Comparison between type $1$ chamber zeta function and
type $2$ edge zeta function}

The type $2$ cycles are obtained from the type $1$ cycles traveled
in reverse direction, hence their algebraic length is doubled while
the geometric length remains the same. Consequently the type $2$
edge zeta function of $X_\G$ is equal to $Z_1(X_\G, u^2)$.

The following theorem compares the difference between the numbers of
type $2$ tailless edge cycles and type $1$ tailless closed
galleries.

\begin{theorem}\label{compareZ2andZ1}
\begin{eqnarray*}
& & u\frac{d}{du}\log Z_1(X_\G, u^2) - u\frac{d}{du}\log Z_2(X_\G,
-u)\\ &=& \sum_{n \ge 1}\big(\sum_{ ~[\g] ~{\rm unramified ~rank-one
~split ~of ~type} ~(3n, ~0)} 2\vol([\g]) u^{2l_A([\g])} \\&+&
\sum_{[\g] ~{\rm ramified ~rank-one ~split ~of ~type} ~(3n, ~0)}
\vol([\g])
u^{2l_A([\g])}\\
&+& \sum_{[\g] ~{\rm ramified ~rank-one ~split ~of ~type} ~(3n+1,
~1)} \vol([\g])u^{l_A([\g^2])}\big).
\end{eqnarray*}
\end{theorem}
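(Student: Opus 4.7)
The plan is to expand both sides of the claimed identity as formal power series in $u$, decompose the coefficients according to conjugacy classes in $[\G]$, and match term by term. From Theorems \ref{rationalZ1} and \ref{rationalZ2} together with the Remark that $\det(I - L_B u)$ is a polynomial in $u^3$, the two log-derivatives equal
\[
u\frac{d}{du}\log Z_1(X_\G, u^2) = 2 \sum_{n \ge 1} \Tr(L_E^n)\, u^{2n}, \qquad u\frac{d}{du}\log Z_2(X_\G, -u) = \sum_{m \ge 1} (-1)^m \Tr(L_B^{3m})\, u^{3m}.
\]
Condition (I) forces $\ord_\pi \det \g \in 3\Z$, so every type $(n,0)$ class satisfies $3 \mid n$, and the difference on the left has nonzero coefficients only at $u^{6n}$ (from both series) and $u^{6n+3}$ (from the gallery series with $m=2n+1$).

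Next, each trace is decomposed over $[\g]$. For the edge trace, Corollaries \ref{typeoftailless} and \ref{rankonetailless} yield $\Delta_A([\g]) = \Delta_G([\g])$ for type-$1$ classes, so $\Tr(L_E^{3n}) = \sum_{[\g] \text{ of type } (3n, 0)} e_\g$, where $e_\g := \#(C_{P_\g^{-1}\G P_\g}(r_\g)\backslash \Delta_A([\g]))$ is computed in Propositions \ref{atailless} (split) and \ref{rankonenumberofalgtailless} (rank-one split). For the gallery trace, Proposition \ref{typeonetaillessgallery} restricts the contributing $[\g]$ to those of type $(3n,0)$ or ramified rank-one split of type $(3n+1,1)$, and Proposition \ref{cycletogallery} identifies $c_\g^{(3m)} :=$ (the number of tailless type-$1$ closed galleries of length $3m$ in $[\g]_B$) with the number of chambers of $X_\G$ whose three vertices lie in $\Delta_G([\g])$ and which contain a unique type-$1$ edge starting a tailless cycle in $[\g]$. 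Matching coefficients of $u^{6n}$ and $u^{6n+3}$ reduces the theorem to the four per-class identities:
\begin{itemize}
\item[(a)] $[\g]$ split of type $(3n, 0)$: $2 e_\g - c_\g^{(6n)} = 0$;
\item[(b)] $[\g]$ unramified rank-one split of type $(3n, 0)$: $2 e_\g - c_\g^{(6n)} = 2\vol([\g])$;
\item[(c)] $[\g]$ ramified rank-one split of type $(3n, 0)$: $2 e_\g - c_\g^{(6n)} = \vol([\g])$;
\item[(d)] $[\g]$ ramified rank-one split of type $(3n+1, 1)$: $c_\g^{(6n+3)} = \vol([\g])$.
\end{itemize}

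The main obstacle is computing $c_\g^{(3m)}$ case by case. Using the explicit descriptions of $\Delta_G([\g])$ from Corollary \ref{primitivesplit} and Proposition \ref{rankonenumberofalgtailless}, together with Proposition \ref{boundaryofgallery}'s bijection between the $6n$ (resp.\ $3(2n+1)$) chambers of a tailless gallery and the starting edges of its boundary cycle(s), this amounts to deciding, for each tailless type-$1$ edge starting a cycle in $[\g]$, how many of the $q+1$ chambers of $\B$ containing it have their third vertex in $\Delta_G([\g])$. In the split case the two-dimensional structure of $\Delta_G([\g])$ inside the apartment of $C_G(r_\g)$ should yield exactly two admissible third vertices per tailless edge, matching the two boundary cycles of Proposition \ref{boundaryofgallery}(1), so that $c_\g^{(6n)} = 2 e_\g$ and the split contribution cancels. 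In the rank-one split cases the one-dimensional axis $C_G(r_\g)K/K$ (thickened by the unipotent translates $v_x$, $g_{i,j,u}$, $g_{i,z}$ of Proposition \ref{rankonenumberofalgtailless}) admits fewer admissible chambers near the boundary of the fundamental domain of $C_{P_\g^{-1}\G P_\g}(r_\g)$, producing the deficits $2\vol([\g])$, $\vol([\g])$, and $\vol([\g])$ claimed in (b), (c), and (d) respectively; in particular for case (d) the admissible chambers biject with $C_{P_\g^{-1}\G P_\g}(r_\g)\backslash C_G(r_\g)/(C_G(r_\g)\cap K)$ of cardinality $\vol([\g])$ by (\ref{fundamentaldomain'}). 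Verifying these chamber counts by an explicit inspection of the unipotent fibers of $\Delta_G([\g])$ is the technically delicate step of the proof.
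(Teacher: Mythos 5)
Your reduction is correct and follows the paper's own route exactly: both use the power-series expansions $u\frac{d}{du}\log Z_1(X_\G,u^2)=2\sum_n\Tr(L_E^n)u^{2n}$ and $u\frac{d}{du}\log Z_2(X_\G,-u)=\sum_m(-1)^m\Tr(L_B^{3m})u^{3m}$, decompose the traces over conjugacy classes via Propositions \ref{boundaryofgallery} and \ref{cycletogallery}, and reduce the identity to the four per-class statements (a)--(d), which in the paper's notation are $N_B(\g)=2N_K(\g)$, $N_B(\g)=2N_K(\g)-2\vol([\g])$, $N_B(\g)=2N_K(\g)-\vol([\g])$, and $N_B(\g)=\vol([\g])$ respectively. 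So the skeleton is right.

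However, you have not actually proved (a)--(d): the chamber counting that you defer as ``the technically delicate step'' \emph{is} the content of the paper's proof, occupying nearly two pages of case analysis, and your sketched heuristic for doing it is partly wrong. In the split case you claim each tailless type-$1$ edge has ``exactly two admissible third vertices,'' but that only holds when $\de([\g])=0$. For $\de \geq 1$ the count per edge varies: the edges labeled by $v_x$ with $\ord_\pi x > -\de$ admit all $q+1$ chambers, while those with $\ord_\pi x = -\de$ admit only one; the aggregate identity $N_B(\g)=\vol([\g])\big[(q+1)q^{\de-1}+(q-1)q^{\de-1}\big]=2\vol([\g])q^\de=2N_K(\g)$ is arithmetic coincidence, not a uniform local statement, and cannot be read off from ``two boundary cycles per gallery.'' Similarly, in the rank-one split cases the deficit comes from the transverse parameter $i$ of the unipotent fiber reaching its extreme value $i=\de$ (and from $\de=0$ having an empty fiber in the unramified case), not from ``the boundary of the fundamental domain of $C_{P_\g^{-1}\G P_\g}(r_\g)$,'' which governs only the axial direction and is irrelevant to the count. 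One also needs to rule out double counting: a chamber whose vertices lie in $\Delta_G([\g])$ must have at most one type-$1$ edge starting a tailless cycle, or the per-edge count would exceed $N_B(\g)$; this is implicit in Propositions \ref{boundaryofgallery}--\ref{cycletogallery} but deserves a sentence. In short, the outline is correct and coincides with the paper's, but the essential computations (a)--(d) must be carried out explicitly as in the paper's Cases I--III and the final paragraph of its proof, and your stated heuristics would mislead that computation in every case except split with $\de=0$ and rank-one split of type $(3n+1,1)$.
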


\begin{proof} Combining Propositions \ref{boundaryofgallery} and \ref{cycletogallery}
as well as the proof of Theorem \ref{rationalZ2}, we have
\begin{eqnarray*}
& & u\frac{d}{du}\log Z_2(X_\G, -u) = \sum_{\g \in [\G]}
~\sum_{\ka_\g(gB) \text{~tailless, type $1$}} (-u)^{l(\ka_\g(gB))}\\
&=& \sum_{n \ge 1} ~\big(\sum_{\g \in [\G], ~[\g] ~{\rm
 of ~type} ~(3n, ~0)} N_B(\g) u^{6n} - \sum_{\g \in [\G],
 ~[\g] ~{\rm ramified ~rank-one ~split ~of ~type} ~(3n+1, ~1)} N_B(\g)u^{6n+3}\big),
\end{eqnarray*}
where $N_B(\g)$ is the number of chambers with vertices $P_\g gK$,
where $gK \in C_{P_\g^{-1} \G P_\g}(r_\g)\backslash \Delta_G([\g])$,
and containing a unique type $1$ edge which starts a tailless cycle
in $[\g]$. On the other hand, for type $1$ cycles we have
\begin{eqnarray*}
u\frac{d}{du}\log Z_1(X_\G, u^2) &=& \sum_{\g \in [\G]}
~\sum_{\ka_\g(gK) \text{~tailless, type $1$}} 2u^{2l_A(\ka_\g(gK))}\\
&=& \sum_{n \ge 1} ~\sum_{\g \in [\G], ~[\g] ~{\rm of ~type} ~(3n,
~0)} 2N_K(\g) u^{6n},
\end{eqnarray*}
where the number $N_K(\g)$ of tailless type $1$ cycles in $[\g]$ was
calculated in \S5 and \S6.
We shall compare this with the number $N_B(\g)$.
Recall that for $[\g]$ of type $1$, we have $\Delta_G([\g])=
\Delta_A([\g])$.

Case I. $\g$ is split with type $(3n, 0)$.  Then $r_\g = \diag(1, a,
b)$, where $1, a, b$ are distinct with $\ord_\pi$$ (a) = 0$ and
$\ord_\pi$$ b = 3n$. Put $\delta = \ord_\pi$$ (1 - a)$.  The centralizer
$C_G(r_\g)$ consists of diagonal elements in $G$. By Corollary
\ref{primitivesplit}, $C_{P_\g^{-1} \G P_\g}(r_\g) \backslash
\Delta_A([\g])$ has cardinality $N_K(\g) = \vol([\g])q^{\delta}$ and is
represented by vertices $h_{i,j}v_xK$, where $h_{i,j} = \diag(1,
\pi^i, \pi^j) \in C_{P_\g^{-1} \G P_\g}(r_\g) \backslash
C_G(r_\g)/(C_G(r_\g) \cap K)$ and
$v_x = \left(\begin{matrix} 1 & x &  \\
& 1 &  \\ & & 1\end{matrix}\right)$ with $x \in
\pi^{-\delta}\oo/\oo$.
The type $1$ tailless cycle $\kappa_\g(P_\g h_{i, j}v_xK)$ is $P_\g
h_{i, j}v_xK \rightarrow P_\g h_{i, j+1}v_xK \cdots \rightarrow P_\g
h_{i, j+3n}v_xK  = \g P_\g h_{i, j}v_xK$ by Corollary
\ref{primitivesplit}.

There are $q+1$ chambers sharing the type $1$ edge $K \rightarrow
\diag(1, 1, \pi)K$ with the third vertex being $u_c K :=
\left(\begin{matrix}\pi & c & \\ & 1 & \\ & & \pi
\end{matrix}\right)K$ with $c \in \oo/\pi\oo$ and $u_\infty K:=
\left(\begin{matrix}1 &  & \\ & \pi & \\ & & \pi
\end{matrix}\right)K$. Left multiplication by $h_{i,j}v_x$ sends the
type $1$ edge to $h_{i, j}v_x K \rightarrow h_{i, j+1}v_xK$ and the
third vertex to $h_{i, j}v_x u_c K = \left(\begin{matrix}1 & (c + x)/\pi & \\
& \pi^{i-1} & \\ & & \pi^j
\end{matrix}\right)K$ and $h_{i, j}v_x u_\infty K =
\left(\begin{matrix}1 & x\pi & \\ & \pi^{i+1} & \\ & & \pi^{j+1}
\end{matrix}\right)K$.
We count the number of such vertices belonging to $C_{P_\g^{-1} \G
P_\g}(r_\g) \backslash \Delta_A([\g])$.

There is only one integral $x$, namely, $x = 0$. When $\delta = 0$,
each type $1$ edge $h_{i, j}v_0K \rightarrow h_{i, j+1}v_0 K$ forms
a chamber with only two vertices $h_{i+1, j+1}v_0 K$ and $h_{i-1, j}
v_0 K$ in $C_{P_\g^{-1} \G P_\g}(r_\g) \backslash \Delta_A([\g])$.
Hence the number of type $1$ tailless galleries in $[\g]_B$ is
$N_B(\g) = 2\#(C_{P_\g^{-1} \G P_\g}(r_\g) \backslash
\Delta_A([\g])) = 2N_K(\g).$

Next assume $\delta \ge 1$. In this case, each type $1$ edge $h_{i,
j}v_0 K \rightarrow h_{i, j+1}v_0 K$ forms a chamber with the $q+1$
vertices $h_{i, j}v_0 u_cK$ and $h_{i, j}v_0 u_\infty K$ in
$C_{P_\g^{-1} \G P_\g}(r_\g) \backslash \Delta_A([\g])$. The same
holds when $h_{i,j}v_0$ is replaced by $h_{i,j}v_x$ for $-1 \ge
\ord_\pi$$ x \ge -\delta + 1$. This gives rise to $(q+1)(q^{\delta -
1} - 1)$ chambers. Finally, when $\ord_\pi$$ x = - \delta$, each type $1$ edge $h_{i, j}v_x K \rightarrow h_{i, j+1}v_x K$ forms a chamber
with only one vertex $h_{i, j}v_x u_\infty K$ in $C_{P_\g^{-1} \G
P_\g}(r_\g) \backslash \Delta_A([\g])$, so there are $(q-1)q^{\delta
- 1}$ chambers. Put together, we get $N_B(\g) = \vol([\g]) \big(q+1 +
(q+1)(q^{\delta - 1} - 1) + (q-1)q^{\delta - 1}\big) =
\vol([\g])2q^\delta = 2N_K(\g).$

Hence there is no contribution from $[\g]$, split type $1$,  in
$u\frac{d}{du}\log Z_1(X_\G, u^2) - u\frac{d}{du}\log Z_2(X_\G,
-u)$.

Case II. $\g$ is unramified rank-one split with type $(3n, 0)$. In
this case $r_\g = \left(
\begin{matrix} a &  &  \\ & e & dc \\ & d& e+db \end{matrix}\right)$,
and the eigenvalues $a$, $e + d \lambda$ and $e + d \bar {\lambda}$
of $\g$  generate an unramified quadratic extension $L$ over $F$.
The type assumption on $\g$ implies that $\ord_\pi$$ a = 3n$ and
$\min(\ord_\pi$$ e, \ord_\pi$$ d) = 0$ so that $e + d \lambda$ and $e +
d \bar {\lambda}$ are units in $L$. Let $\delta = \ord_\pi$$ d$.

As discussed in \S \ref{centralizers},
the double cosets $C_{P_\g^{-1} \G P_\g}(r_\g)\backslash
C_G(r_\g)/C_G(r_\g)\cap K$ are represented by $h_m = \diag(\pi^m, 1,
1)$, $m \mod \vol([\g])$. By Proposition
\ref{rankonenumberofalgtailless}, $C_{P_\g^{-1} \G P_\g}(r_\g)
\backslash \Delta_A([\g])$ has cardinality $N_K(\g) =
\vol([\g])\frac{q^\delta + q^{\delta - 1}-2}{q-1}$ and is represented
by $h_mg_{i,j,u}K$ and $h_m g_{i,z}K$, where $m \mod \vol([\g])$,
$g_{i,j,u} = \left(
\begin{matrix}
1 &  & \\
& \pi^{i-j} & u\\
& & \pi^j
\end{matrix} \right)$ with $0 \le j \le i \le \delta$, $u \in \oo^\times/\pi^{i-j}\oo$
for $j<i$ and $u = 0$ for $j=i$, and $ \quad g_{i,z} = \left(
\begin{matrix}
1 &  & \\
& \pi^{i} & z\\
& & 1
\end{matrix} \right)$ with $1 \le i \le \delta$ and $z \in \pi
\oo/\pi^i \oo$. Let $g = h_mg_{i,j,u}$ or $h_m g_{i,z}$. Then, by
Proposition \ref{rankonenumberofalgtailless}, the type $1$ tailless
closed geodesic $\kappa_\g(P_\g gK)$ is given by $P_{\g}gK
\rightarrow P_{\g}g \diag(\pi, 1, 1)K \rightarrow \cdots \rightarrow
P_{\g}g \diag(\pi^{3n}, 1, 1)K = \g P_{\g}gK$.

It remains to count the number of chambers with vertices in
$C_{P_\g^{-1} \G P_\g}(r_\g) \backslash \Delta_A([\g])$ containing a
given type $1$ edge $gK \rightarrow g\diag(\pi, 1, 1)K$ for $g
=h_mg_{i,j,u}$ or $h_mg_{i,z}$. When $\delta = 0$, there are no
$g_{i,z}$ and only one $g_{i, j, u}$, equal to the identity matrix,
hence the vertices in $C_{P_\g^{-1} \G P_\g}(r_\g) \backslash
\Delta_A([\g])$ are $h_mK$, $m \mod \vol([\g])$.  It is clear that
there are no chambers formed by these vertices. Hence $N_K(\g) =
\vol([\g])$ and $N_B(\g) = 0$ when $\delta = 0$.

Next assume $\delta \ge 1$. There are $q+1$ chambers in $\B$ sharing
the type $1$ edge $K \rightarrow \diag(\pi, 1, 1)K$ with the third
vertex
being $w_x K:= \left(\begin{matrix}\pi & & \\
& \pi & x \\ & & 1
\end{matrix}\right)K$ with $x \in \oo/\pi\oo$ and $w_\infty K:= \diag(1, \pi^{-1}, 1)K$,
respectively. Left multiplication by $g = h_mg_{i,j,u}$ or $
h_mg_{i,z}$ sends the edge $K \rightarrow \diag(\pi, 1, 1)K$ to the
type $1$ edge $gK \rightarrow g\diag(\pi, 1, 1)K$, so we need to
count the number of distinct vertices among $gw_xK$ and $gw_\infty
K$ which fall in $C_{P_\g^{-1} \G P_\g}(r_\g) \backslash
\Delta_A([\g])$. Observe that
$$h_mg_{i,j,u}w_x K = \left(\begin{matrix} \pi^{m+1} & & \\ &
\pi^{i-j+1} & x \pi^{i-j} + u \\ & & \pi^j \end{matrix}\right)K,
\qquad  h_mg_{i,j,u}w_\infty K= \left(\begin{matrix} \pi^{m} & & \\
& \pi^{i-j-1} &  u \\ & & \pi^j \end{matrix}\right)K,$$
$$h_mg_{i,z}w_x K= \left(\begin{matrix} \pi^{m+1} & & \\ &
\pi^{i+1} & x \pi^{i} + z \\ & & 1 \end{matrix}\right)K, \qquad
\text{and} \qquad h_mg_{i,z}w_\infty K =\left(\begin{matrix} \pi^{m} & & \\
& \pi^{i-1} & z \\ & & 1 \end{matrix}\right)K.$$

It is straight forward to check that, for $0 \le i \le \delta - 1$,
all $gw_x K$ and $gw_\infty K$ are distinct vertices in
$C_{P_\g^{-1} \G P_\g}(r_\g) \backslash \Delta_A([\g])$, thus there
are $\vol([\g])(q+1)\frac{q^\delta + q^{\delta - 1}-2}{q-1}$ chambers.
When $i = \delta$, for each $g$ above, only $gw_\infty K$ lies in
$C_{P_\g^{-1} \G P_\g}(r_\g) \backslash \Delta_A([\g])$, hence there
are $\vol([\g])(q^\delta + q^{\delta - 1})$ chambers. Altogether,
$N_B(\g)$ is equal to $2N_K(\g) - 2\vol([\g])$ for $\delta \ge 0$.

In conclusion, the contribution of an unramified rank-one split
$[\g]$ of type $1$ in $u\frac{d}{du}\log Z_1(X_\G, u^2)
-u\frac{d}{du}\log Z_2(X_\G, -u)$ is $2\vol([\g])u^{2l_A([\g])}$.

Case III. $\g$ is ramified rank-one split with type $(3n, 0)$. Then
$r_\g = \left(
\begin{matrix} a &  &  \\ & e & dc \\ & d& e+db \end{matrix}\right)$
 and the eigenvalues $a$, $e + d \lambda$ and $e + d \bar {\lambda}$
of $\g$  generate a ramified quadratic extension $L$ over $F$. In
this case, $\ord_\pi$$ a = 3n$ and $\ord_\pi$$ e = 0$ so that $e + d
\lambda$ and $e + d \bar {\lambda}$ are units in $L$. Let $\delta =
\ord_\pi$$ d$.

As discussed in \S \ref{centralizers}, $C_{P_\g^{-1} \G P_\g}(r_\g) \backslash
C_G(r_\g)/C_G(r_\g)\cap K$ has cardinality $\vol([\g])$, and it is
represented by $h= \diag(\pi^m, 1, 1)$ with $0 \le m \le
(\vol([\g])-1)/2$ and $ \diag(\pi^m, 1, 1)\pi_L$ with $0 \le m \le
(\vol([\g])-3)/2$ if $\vol([\g])$ is odd, and by $h = \diag(\pi^m, 1, 1)$
and $\diag(\pi^m, 1, 1)\pi_L$ with $m \mod \vol([\g])/2$ if $\vol([\g])$
is even. Here
$\pi_L = \left(\begin{matrix} 1 & & \\ & & c\\
 & 1 & b \end{matrix} \right)$ is imbedded in $G$.

It follows from
Proposition \ref{rankonenumberofalgtailless} that $C_{P_\g^{-1} \G
P_\g}(r_\g) \backslash \Delta_A(\g)$ is represented by $hg_{i,j,u}K$
for $g_{i,j,u}$ as in Case II and $h$ as above, so the total number
of vertices is $\vol([\g])(q^{\delta+1}-1)/(q-1)= N_K(\g)$.
Now, for any $gK = hg_{i,j,u}K$ in $\Delta_A([\g])$, the type $1$
tailless cycle $\kappa_\g(P_\g gK)$ is $P_\g gK \rightarrow P_\g
g\diag(\pi, 1, 1)K \rightarrow \cdots \rightarrow
 P_\g g\diag(\pi^{3n}, 1, 1)K = \g P_\g gK$ by Proposition \ref{rankonenumberofalgtailless}.

 To count the number of chambers we proceed as in Case II by
 counting, for each $g = hg_{i,j,u}$, the number of $gw_x K$ and
 $gw_\infty K$ which lie in $C_{P_\g^{-1} \G P_\g}(r_\g) \backslash
 \Delta_A(\g)$.

 We first discuss the case $\delta = 0$. Then there is only
 one $g_{0,0,u}$, equal to the identity matrix. All representatives
 are given by $hK$.
 Observe that $\diag(\pi^m, 1, 1)\pi_L K =
 \left(\begin{matrix}\pi^m & & \\ & \pi & 0 \\ & & 1
 \end{matrix}\right) K$. So there is only one vertex
 $gw_0 K$ which will form a chamber containing the
 type $1$ edge $gK \rightarrow g\diag(\pi, 1, 1)K$. Hence the
  number of chambers is $N_B(\g)= \vol([\g]) = 2N_K(\g) -
 \vol([\g])$ for $\delta = 0$.

 Now assume $\delta \ge 1$.  One sees from the explicit computation
 in Case II that for $g = hg_{i,j,u}$,
 all $q+1$ vertices $gw_x K$ and
 $gw_\infty K$ are distinct vertices in  $C_{P_\g^{-1} \G P_\g}(r_\g) \backslash \Delta_A([\g])$
 provided that $0 \le i \le \delta - 1$; when $i = \delta$, only one
 vertex, $gw_\infty K$, lies in $C_{P_\g^{-1} \G P_\g}(r_\g) \backslash \Delta_A(\g)$.
 This gives $\vol([\g])\big((q^\delta - 1)(q+1)/(q-1) + q^\delta \big) = \vol([\g])\big(2(q^{\delta + 1}
 - 1)/(q-1) - 1\big)$ chambers. Therefore $N_B(\g) = 2N_K(\g) - \vol([\g])$ for $\delta \ge 1$.

 This shows that the contribution of a ramified rank-one split
$[\g]$ of type $1$ in $u\frac{d}{du}\log Z_1(X_\G, u^2)
-u\frac{d}{du}\log Z_2(X_\G, -u)$ is $\vol([\g])u^{2l_A([\g])}$.

Finally we consider $[\g]$ of type $(3n+1, ~1)$. This happens only
when $\g$ is ramified rank-one split with eigenvalues $a, e+
d\lambda, e + d \bar{\lambda}$, where $a, e, d \in F$, $\ord_\pi$$ a =
3n+2$, $\ord_\pi$$ e \ge 1$ and $\delta = \ord_\pi$$ d = 0$ by the
analysis above Theorem \ref{rankoneminlength}. As noted before, such
$[\g]$ has no contribution to $L_1(X_\G, u^2)$ and the length of a
type $1$ tailless gallery in $[\g]_B$ is $6n+3$. Its contribution in
$u\frac{d}{du}\log Z_2(X_\G, -u)$ is $-N_B(\g) u^{6n+3}$ with
$N_B(\g) = \#C_{P_\g^{-1} \G P_\g}(r_\g) \backslash \Delta_G([\g])$.
Since $\delta = 0$ and $\mu = 0$ by the remark following Theorem
\ref{numberinrankoneclass}, we have $\Delta_G([\g]) =
\Delta_A([\g])$ such that $N_B(\g) = \vol([\g])$ by Corollary
\ref{rankonedoublecosets}.

This completes the proof of the theorem.
\end{proof}

\section{The proof of the Main Theorem}

\subsection{Type $1$ zeta function}

As defined in (\ref{defLZ1}), the type $1$ edge zeta function of the
quotient $X_\G$ is
\begin{equation}\label{zeta}
\begin{aligned}
 Z_{1}(X_\G, u)
= \prod_{\g \in [\G], ~[\g] ~\text{type $1$}}~\prod_{\substack{ \kappa_\g(gK) \in [\g] ~\text{primitive, tailless}\\
\text{up to equivalence}}} (1 - u^{l_A(\kappa_\g(gK))})^{-1}.
\end{aligned}
\end{equation}
Note that $l_A(\kappa_\g(gK))= l_G(\kappa_\g(gK)) = l_A([\g]) =
l_G([\g])$ is the length of $[\g]$. We proceed to investigate its
logarithmic derivative.

Although the zeta function only concerns type $1$ tailless cycles,
to describe it we shall involve all homotopy cycles. First we
introduce the numbers $P_{n,m}$, $Q_{n,m}$, and $R_{n,m}$ which
count the algebraically tailless homotopy cycles of type $(n, m)$
arising from split, unramified rank-one split, and ramified rank-one
split $\g$'s, respectively:
\begin{equation}\label{pnm}
\begin{aligned}
P_{n,m} = ~ \sum_{\substack{\g \in [\G] ~\text{split}\\ ~[\g]~
\text{of type }(n,m) }} ~\#(C_{P_\g^{-1} \G P_\g}(r_\g)\backslash
\Delta_A([\g])),
\end{aligned}
\end{equation}
\begin{equation}\label{qnm}
\begin{aligned}
Q_{n,m} = ~ \sum_{\substack{\g \in [\G] ~\text{unram. rank-one split}\\
~[\g]~ \text{of type }(n,m) }} ~\#(C_{P_\g^{-1} \G
P_\g}(r_\g)\backslash \Delta_A([\g])),
\end{aligned}
\end{equation}
\begin{equation}\label{rnm}
\begin{aligned}
R_{n,m} = ~ \sum_{\substack{\g \in [\G] ~\text{ram. rank-one split}\\
~[\g]~ \text{of type }(n,m) }} ~\#(C_{P_\g^{-1} \G
P_\g}(r_\g)\backslash \Delta_A([\g])).
\end{aligned}
\end{equation}

The following expression describes the type $1$ edge zeta function
in terms of the number of tailless type $1$ homotopy cycles in
$X_\G$.

\begin{proposition}\label{zetaassum}
$$ u \frac{d}{du} \log Z_1(X_\G, u) =\sum_{n>0}(P_{n,0} + Q_{n,0} + R_{n,0})u^n.$$
\end{proposition}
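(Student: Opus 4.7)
The plan is to compute the logarithmic derivative of the Euler product defining $Z_1$, convert the resulting sum over equivalence classes of primitive tailless cycles into a sum over all tailless type $1$ homotopy cycles, and then re-index that sum by conjugacy classes $[\g]$ using the classification of elements of $\G$.

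First I would apply the formal identity
\begin{equation*}
u\frac{d}{du}\log (1-u^{N})^{-1}=\sum_{m\ge1} N\,u^{Nm}
\end{equation*}
termwise to the product (\ref{zeta}). Writing $N=l_A(\kappa_\g(gK))$ for a primitive tailless type $1$ cycle and noting that its equivalence class $[\kappa_\g(gK)]$ consists of exactly $N$ cycles (one for each shift of the starting vertex), the factor of $N$ produced by differentiation cancels the size of the equivalence class. Hence
\begin{equation*}
u\frac{d}{du}\log Z_1(X_\G,u)=\sum_{[\g] \text{ type }1}\ \sum_{\substack{\kappa_\g(gK)\in[\g]\\ \text{primitive, tailless}}}\ \sum_{m\ge1}u^{N m}=\sum_{[\g] \text{ type }1}\ \sum_{\substack{\kappa_\g(gK)\in[\g]\\ \text{tailless, type }1}}u^{l_A(\kappa_\g(gK))},
\end{equation*}
since every tailless type $1$ cycle is uniquely a positive power of a primitive one and powers of a primitive tailless cycle are again tailless of the same type.

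Next I would identify, for each $n\ge 1$, the coefficient of $u^n$ with $P_{n,0}+Q_{n,0}+R_{n,0}$. The point is that tailless type $1$ homotopy cycles of algebraic length $n$ are precisely the algebraically tailless cycles lying in sets $[\g]$ of type $(n,0)$: by the second assertion of Corollary \ref{typeoftailless} for split $\g$ and of Corollary \ref{rankonetailless} for rank-one split $\g$, when $[\g]$ has type $1$ the algebraically tailless cycles in $[\g]$ coincide with the tailless cycles and automatically carry the same type as $[\g]$, while conversely a tailless type $1$ cycle $\kappa_\g(gK)$ forces $[\g]$ to have type $(n,0)$ by the first assertion of the same corollaries. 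By Theorem \ref{charlAandlG} the common algebraic length equals $l_A([\g])=n$. Combined with the Classification Theorem (which rules out irregular $\g$ under assumption (II)), this partitions the tailless type $1$ cycles of length $n$ according to whether the underlying $\g$ is split, unramified rank-one split, or ramified rank-one split, so that by the very definitions (\ref{pnm}), (\ref{qnm}), (\ref{rnm}) the coefficient of $u^n$ is $P_{n,0}+Q_{n,0}+R_{n,0}$.

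The main subtlety—rather than a hard obstacle—is keeping the combinatorics of equivalence classes versus individual cycles straight in step one, and in step two being careful that the bijection ``tailless type $1$ cycles of length $n$'' $\leftrightarrow$ ``algebraically tailless cycles in $[\g]$'s of type $(n,0)$'' is two-sided; both directions are supplied by Corollaries \ref{typeoftailless} and \ref{rankonetailless}. Once these are in place, the proposition follows immediately.
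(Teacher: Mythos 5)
Your proof follows the same route as the paper's: compute the logarithmic derivative of the Euler product, use the fact (from \S\ref{primitivecycles}) that the equivalence class of a primitive tailless cycle of algebraic length $N$ has exactly $N$ members to cancel the differentiation factor, replace the double sum over primitives-and-powers by a single sum over all tailless type $1$ cycles, and then read off the coefficient of $u^n$ from the definitions of $P_{n,0}$, $Q_{n,0}$, $R_{n,0}$. The only difference is cosmetic: you spell out in detail the bijection between tailless type $1$ cycles of length $n$ and algebraically tailless cycles in type $(n,0)$ classes via Corollaries \ref{typeoftailless} and \ref{rankonetailless}, a step the paper leaves implicit.
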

\begin{proof} By definition,
$$ \log Z_1(X_\G, u) = \sum_{\g \in [\G], ~[\g] ~\text{type $1$}}~\sum_{\substack{ \kappa_\g(gK) ~\text{primitive, tailless}\\
\text{up to equivalence}}} ~\sum_{m \ge 1}
\frac{u^{ml_A(\kappa_\g(gK))}}{m}$$ so that
\begin{eqnarray*}
u \frac{d}{du} \log Z_1(X_\G, u) &=& \sum_{\g \in [\G],
 ~[\g] ~\text{type $1$}}~\sum_{\substack{ \kappa_\g(gK) ~\text{primitive, tailless}\\
\text{up to equivalence}}} ~\sum_{m \ge 1}
l_A(\kappa_\g(gK))~u^{m l_A(\kappa_\g(gK))}\\
&=& \sum_{\g \in [\G], ~[\g] ~\text{type $1$}}~\sum_{\kappa_\g(gK) ~\text{primitive, tailless}} ~\sum_{m \ge
1}~u^{m l_A(\kappa_\g(gK))}
\end{eqnarray*}
since, as discussed in \S \ref{primitivecycles}, there are $l_A([\g])$ type $1$
tailless homotopy cycles equivalent to a given primitive tailless
type $1$ homotopy cycle in $[\g]$. Observe that the $\kappa_\g(gK)$
above runs through all primitive tailless type $1$ homotopy cycles
on $X_\G$, hence their repetitions give all type $1$ tailless
homotopy cycles. The proposition follows by noting that when a cycle
is repeated $m$ times, the length is multiplied by $m$.
\end{proof}

\subsection{The number of homotopy cycles of type $(n,m)$ }

In order to gain information on $P_{n,0}$, $Q_{n,0}$ and $R_{n,0}$,
we extend the summation to include homotopy cycles of type $(n,m)$.
Recall that the number of such cycles is $\Tr(B_{n,m})$, and cycles
with tails are also included.
Their
relation with the number of algebraically tailless cycles is given below.

\begin{proposition}\label{traceBnm}
With the same notation as in Theorem \ref{numberinrankoneclass}, we
have
\begin{eqnarray*}
& &\sum_{\substack{n,m \geq 0 \\ (n,m)\neq (0,0)}}
\Tr(B_{n,m})u^{n+2m} = \bigg(\sum_{\substack{n,m \geq 0 \\
(n,m)\neq (0,0)}}
P_{n,m}u^{n+2m}\bigg)\frac{1-u^3}{1-q^3 u^3}\\
&+& \sum_{\substack{\g \in [\G]\\ [\g] ~\text{unram. rank-one
split}}}\vol([\g])u^{l_A([\g])}\bigg(\frac{q^{\de([\g])+1}+q^{\de([\g])}-2}{q-1}+
\frac{(q+1)q^{\de([\g])+2}u^3}{1-q^3 u^3}\bigg)
\bigg(\frac{1-u^3}{1-q^2 u^3}\bigg) \\
 &+& \sum_{\substack{\g \in [\G]\\ [\g] ~\text{ram. rank-one
split}}}\vol([\g])q^{\mu([\g])}u^{l_A([\g])}\bigg(\frac{q^{\de([\g])
+1} -1}{q-1} + \frac{q^{\de([\g]) +3}u^3}{1-q^3u^3}
\bigg)\frac{1-u^3}{1-q^2u^3}.
\end{eqnarray*}
\end{proposition}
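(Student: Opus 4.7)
\medskip

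\noindent\textbf{Proof proposal.} My plan is to prove Proposition \ref{traceBnm} as a bookkeeping identity: start from the expression (\ref{sumBnm}) for the generating function $\sum \Tr(B_{n,m})u^{n+2m}$ as a double sum indexed by $[\G]\setminus\{id\}$, partition that index set according to the classification of Theorem (classification of elements in $\G$, using assumption (II) which rules out irregular elements), and substitute the three generating-function formulas already established in Sections 5 and 6.

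First, by (\ref{sumBnm}), the left-hand side equals
\[
\sum_{\g \in [\G],\,\g\neq id}\;\sum_{\kappa_\g(gK)\in[\g]} u^{l_A(\kappa_\g(gK))}.
\]
Since $\G$ satisfies (II), every non-identity $\g\in[\G]$ is either split, unramified rank-one split, or ramified rank-one split, and these three possibilities are mutually exclusive. I would split the sum accordingly into three pieces. For the split piece, Theorem \ref{numberinaclass} gives
\[
\sum_{\kappa_\g(gK)\in[\g]} u^{l_A(\kappa_\g(gK))} = \#\bigl(C_{P_\g^{-1}\G P_\g}(r_\g)\backslash\Delta_A([\g])\bigr)\,u^{l_A([\g])}\,\frac{1-u^3}{1-q^3u^3}.
\]
Summing over split $\g\in[\G]$ and grouping by the type $(n,m)$ of $[\g]$ (noting that $l_A([\g]) = n + 2m$), and then applying the definition (\ref{pnm}) of $P_{n,m}$, the split contribution becomes exactly $\bigl(\sum_{(n,m)\neq(0,0)} P_{n,m} u^{n+2m}\bigr)\cdot(1-u^3)/(1-q^3u^3)$.

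For the remaining two pieces, I would substitute part (A1) of Theorem \ref{numberinrankoneclass} for each unramified rank-one split $\g$ and part (B1) for each ramified rank-one split $\g$, producing precisely the two remaining sums on the right-hand side of the asserted identity. No further combining is needed for these two, since the $\de$ and $\mu$ parameters depend on $\g$ and the statement is written as a sum over such $\g$. There is no real obstacle: the content of the proposition lies in the earlier theorems, and this result is simply the aggregation of the per-conjugacy-class formulas, with the split case rewritten via the auxiliary counting function $P_{n,m}$ so that the identity is prepared for comparison in \S 9 with the operator-theoretic expansion of the logarithmic derivative of $(1-u^3)^{\chi(X_\G)}/\det(I-A_1u+qA_2u^2-q^3u^3I)$.
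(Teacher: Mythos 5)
Your proof is correct and takes essentially the same approach as the paper: break the sum in (\ref{sumBnm}) over non-identity conjugacy classes into the three classification types (using assumption (II) to exclude irregular elements), then substitute Theorem \ref{numberinaclass} with the definition of $P_{n,m}$ for the split piece and Theorem \ref{numberinrankoneclass} parts (A1) and (B1) for the two rank-one split pieces.
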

\begin{proof} Break the right side of (\ref{sumBnm}) into three parts, over
split, unramified rank-one split, and ramified rank-one split
$\g$'s, respectively. Applying Theorem \ref{numberinaclass} to the
split part and Theorem \ref{numberinrankoneclass} to the unramified
and ramified rank-one split parts, and using the definition of
$P_{n,m}$, we get the desired formula.
\end{proof}

Next we compute the number of type $1$ homotopy cycles on $X_\G$.

\begin{proposition}\label{alltypezerocycles}
With the same notation as in Theorem \ref{numberinrankoneclass}, we
have
\begin{eqnarray*}
& &\sum_{n>0} \Tr(B_{n,0})u^{n} = (1-q^{-1})\bigg(\sum_{(n, m)
\ne (0,0)} P_{n,m}u^{n+2m}\bigg)\frac{1-q^2u^3}{1-q^3u^3}\\
&+& \sum_{\substack{\g \in [\G]\\ [\g] ~\text{unram. rank-one
split}}}\vol([\g])u^{l_A([\g])}\bigg(q^{\de([\g])} + q^{\de([\g]) - 1}+
\frac{(q^2-1)q^{\de([\g]) +1}u^3}{1-q^3u^3} \bigg) \\
&+& \sum_{\substack{\g \in [\G]\\ [\g] ~\text{ram. rank-one
split}}}\vol([\g])u^{l_A([\g])}\bigg(q^{\de}(q^{\mu} - \mu) +
\frac{(q-1)q^{\de + \mu +2}u^3}{1-q^3u^3}\bigg)\\
&+& q^{-1}\sum_{n>0} (P_{n,0} + Q_{n,0} + R_{n,0})u^{n} -
2q^{-1}\sum_{\substack{\g \in [\G], ~\text{type $1$}\\ [\g]
~\text{unram. rank-one split}}}\vol([\g])u^{l_A([\g])} \\
&+& \sum_{\substack{\g \in [\G], ~\text{type $1$}\\ [\g] ~\text{ram.
rank-one split}}}\vol([\g])u^{l_A([\g])}(-q^{\mu([\g])-1} +
\mu([\g])q^{\de([\g])}).
\end{eqnarray*}
\end{proposition}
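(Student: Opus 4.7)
The plan is to start from identity (\ref{sumBn0}), which rewrites the left-hand side as a double sum over non-identity $\g \in [\G]$ and over type $1$ cycles $\kappa_\g(gK) \in [\g]$. Using the classification of elements in $\G$ together with the regularity assumption (II), the outer sum splits naturally into three pieces indexed by split, unramified rank-one split, and ramified rank-one split classes, and each piece is evaluated by directly invoking Theorem \ref{type0cyclesinaclass} for split $\g$ and parts (A2), (A3), (B2), (B3) of Theorem \ref{numberinrankoneclass} for the two rank-one split cases.

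The organizing idea is to apply the ``non-type-$1$'' formula uniformly to every $\g$ in each family and absorb the discrepancy at type-$1$ classes into correction terms. For split $\g$, Theorem \ref{type0cyclesinaclass} shows that the two formulas differ by exactly $q^{-1}\vol([\g])(|1-a||a-b||b-1|)^{-1} u^{l_A([\g])}$; identifying this factor with $\#(C_{P_\g^{-1}\G P_\g}(r_\g) \backslash \Delta_A([\g]))$ via Proposition \ref{atailless} and summing according to the definition (\ref{pnm}) of $P_{n,m}$, the split contribution takes the form $(1-q^{-1})\tfrac{1-q^2u^3}{1-q^3u^3} \sum P_{n,m}u^{n+2m} + q^{-1}\sum_{n>0} P_{n,0}u^n$. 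The unramified and ramified rank-one split contributions are handled analogously: (A2) and (B2) are applied uniformly across the respective family, and type-$1$ correction terms with normalizations packaged through $Q_{n,0}$ and $R_{n,0}$ (via Proposition \ref{rankonenumberofalgtailless}) are tacked on to repair the error at type-$1$ classes.

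The technical heart of the argument is verifying, summand by summand, that the stated type-$1$ corrections exactly reproduce the actual differences $(A3)-(A2)$ and $(B3)-(B2)|_{\mu=0}$. For the unramified case this reduces to the elementary identity $\tfrac{q^{\de+1}+q^{\de}-2}{q-1} - (q^{\de}+q^{\de-1}) = q^{-1}\tfrac{q^{\de+1}+q^{\de}-2}{q-1} - 2q^{-1}$, matching the correction $q^{-1}Q_{n,0}u^n - 2q^{-1}\vol([\g])u^{l_A([\g])}$. For the ramified case, one first uses that $\mu = 0$ whenever $[\g]$ has type $1$ (by the remark following Theorem \ref{numberinrankoneclass}) and then checks $\tfrac{q^{\de+1}-1}{q-1} - q^{\de} = q^{-1}\tfrac{q^{\de+1}-1}{q-1} - q^{-1}$, so that the right-hand side matches $q^{-1} R_{n,0}u^n + \vol([\g])u^{l_A([\g])}(-q^{\mu-1}+\mu q^{\de})$ at $\mu=0$ (the $\mu q^{\de}$ summand is then $0$).

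The main obstacle is bookkeeping rather than genuine difficulty: one must keep straight which classes are type $1$, confirm that the $u^3$-tails in (A2) and (A3) already coincide (and likewise in (B2) and (B3)) so only the constant coefficients require adjustment, and notice that non-type-$1$ ramified rank-one split classes where $\mu=1$ are handled correctly by (B2) directly, without any further correction. Once these checks are in place, combining the three contributions yields the right-hand side of the stated identity.
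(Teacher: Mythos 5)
Your proposal is correct and takes essentially the same approach as the paper: the paper's proof also splits the sum over $\g$ into split, unramified rank-one, and ramified rank-one parts, applies the ``non-type-$1$'' formulas (i), (A2), (B2) uniformly, writes the type-$1$ discrepancies explicitly (its displayed expression (\ref{rankonepart})), and then uses exactly the elementary identities you cite, together with Propositions \ref{atailless} and \ref{rankonenumberofalgtailless}, to express those discrepancies via $P_{n,0}$, $Q_{n,0}$, $R_{n,0}$. Your observations that the $u^3$-tails in (A2)/(A3) and (B2)/(B3) coincide at $\mu=0$, and that non-type-$1$ ramified classes with $\mu=1$ need no further correction, are both correct and reflect the same bookkeeping the paper carries out.
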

\begin{proof} By definition,
\begin{eqnarray*}
\sum_{n>0} \Tr(B_{n,0})u^{n} = \sum_{\g \in [\Gamma], ~\g \ne id}
~\sum_{\kappa_\g(gK) \in [\g] ~ \text{type $1$}}
u^{l_A(\kappa_\g(gK))}.
 \end{eqnarray*}
 We split the sum over $\g$ into three parts according to $\g$ split, unramified rank-one split,
  or ramified rank-one split. For the split part, we add (i) and (ii) of Theorem
 \ref{type0cyclesinaclass} and use the definition of $P_{n,m}$ to arrive
 at the sum $$(1-q^{-1})\bigg(\sum_{(n, m) \ne
(0,0)} P_{n,m}u^{n+2m}\bigg)\frac{1-q^2u^3}{1-q^3u^3} ~+~
q^{-1}\bigg(\sum_{n>0} P_{n,0}u^{n}\bigg).$$ For the unramified
(resp. ramified) rank-one split part, we add (A2) and (A3) (resp.
(B2) and (B3)) of Theorem \ref{numberinrankoneclass} to get
\begin{equation}\label{rankonepart}
\begin{aligned}
& &\sum_{\substack{\g \in [\G]\\ [\g] ~\text{unram. rank-one
split}}}\vol([\g])u^{l_A([\g])}\bigg(q^{\de([\g])} + q^{\de([\g]) - 1}+
\frac{(q^2-1)q^{\de([\g]) +1}u^3}{1-q^3u^3} \bigg) \\
&+& \sum_{\substack{\g \in [\G], ~\text{type $1$}\\ [\g]
~\text{unram. rank-one
split}}}\vol([\g])u^{l_A([\g])}\frac{q^{\de([\g])} +
q^{\de([\g])-1}-2}{q-1} \\
&+&\sum_{\substack{\g \in [\G]\\ [\g] ~\text{ram. rank-one
split}}}\vol([\g])u^{l_A([\g])}\bigg(q^{\de([\g])}(q^{\mu([\g])} -
\mu([\g])) +
\frac{(q-1)q^{\de([\g]) + \mu([\g]) +2}u^3}{1-q^3u^3}\bigg) \\
&+& \sum_{\substack{\g \in [\G], ~\text{type $1$}\\ [\g] ~\text{ram.
rank-one
split}}}\vol([\g])u^{l_A([\g])}\bigg(q^{\mu([\g])}\frac{q^{\de([\g])}-1}{q-1}
+ \mu([\g])q^{\de([\g])}\bigg).
\end{aligned}
\end{equation}
It follows from Proposition \ref{rankonenumberofalgtailless} and the
definitions of $Q_{n,0}$ and $R_{n,0}$ that
\begin{equation}\label{unramodds}
\begin{aligned}
& &\sum_{\substack{\g \in [\G], \text{type $1$}\\ [\g] ~\text{unram.
rank-one split}}}\vol([\g])u^{l_A([\g])}\frac{q^{\de([\g])} +
q^{\de([\g])-1}-2}{q-1} \\
&=& q^{-1}\sum_{n> 0} Q_{n,0}u^n - 2q^{-1}\sum_{\substack{\g \in
[\G], ~\text{type $1$}\\ [\g] ~\text{unram. rank-one
split}}}\vol([\g])u^{l_A([\g])}
\end{aligned}
\end{equation} and
\begin{equation}\label{ramodds}
\begin{aligned}
& &\sum_{\substack{\g \in [\G], ~\text{type $1$}\\ [\g] ~\text{ram.
rank-one
split}}}\vol([\g])u^{l_A([\g])}\bigg(q^{\mu([\g])}\frac{q^{\de([\g])}-1}{q-1}
+ \mu([\g])q^{\de([\g])}\bigg) \\
&=& q^{-1}\sum_{n> 0} R_{n,0}u^n + \sum_{\substack{\g \in [\G],
~\text{type $1$}\\ [\g] ~\text{ram. rank-one
split}}}\vol([\g])u^{l_A([\g])}(-q^{\mu([\g])-1} +
\mu([\g])q^{\de([\g])}).
\end{aligned}
\end{equation}
Finally, plug (\ref{unramodds}) and (\ref{ramodds}) into
(\ref{rankonepart}) to complete the proof.
\end{proof}

\subsection{Proof of the Main Theorem}

Combining Propositions \ref{alltypezerocycles} and \ref{traceBnm},
we obtain
\begin{eqnarray*}
 & &q \bigg( \sum_{n>0}
\Tr(B_{n,0})u^{n}\bigg) - (q-1)\bigg(\sum_{\substack{n,m \geq 0 \\
(n,m)\neq (0,0)}} \Tr(B_{n,m})u^{n+2m}\bigg)\bigg(\frac{1-q^2
u^3}{1-u^3}\bigg)\\
&=& \sum_{n>0} (P_{n,0} + Q_{n,0} + R_{n,0})u^{n} +
\sum_{\substack{\g \in [\G], ~\text{not type $1$}\\ [\g]
~\text{unram. rank-one split}}}2\vol([\g])u^{l_A([\g])} \\
&+& \sum_{\substack{\g \in [\G], ~\text{not type $1$}\\ [\g]
~\text{ram. rank-one split}}}\vol([\g])u^{l_A([\g])}(q^{\mu([\g])} -
\mu([\g])q^{\de([\g])+1}).
\end{eqnarray*}

As before, to a rank-one split $\g$, we associate $r_\g = \left(
\begin{matrix} a &  &  \\ & e & dc \\ & d& e+db \end{matrix}\right) $.
First assume $\g$ is unramified rank-one split. By Theorem
\ref{rankoneminlength}, $[\g]$ has type $(n, m) = (\ord_\pi$$ a,
\min(\ord_\pi$$ e, \ord_\pi$$ d))$, hence $[\g]$ is not of type $1$ if
and only if $a$ is a unit, which is equivalent to its inverse
$[\g^{-1}]$ having type $(m, 0)$. Note that $l_A([\g]) = 2m =
2l_A([\g^{-1}])$ by Theorem \ref{rankoneminlength}. Next assume that
$[\g]$ is ramified rank-one split. Since $\mu([\g]) = 1$ implies
$\de([\g]) = 0$, we have $q^{\mu([\g])} - \mu([\g])q^{\de([\g])+1} =
0$ in this case. Thus we need only consider the case $\mu([\g]) = 0$
so that $q^{\mu([\g])} - \mu([\g])q^{\de([\g])+1} = 1$. Then $[\g]$
is not of type $1$ if and only if $a$ is a unit, in which case it
has type $(0, \ord_{\pi}$$ e)$ if $\ord_{\pi}$$ e \le \ord_{\pi}$$ d$, and
type $(1, \ord_{\pi}$$ d)$ if $\ord_{\pi}$$ d < \ord_{\pi}$$ e$ by Theorem
\ref{rankoneminlength}. Further, we see that $[\g^{-1}]$ has type
$(\ord_{\pi}$$ e, 0)$ so that $l_A([\g]) = 2 l_A([\g^{-1}]) =
2\ord_{\pi}$$ e$ in the former case, and in the latter case,
$[\g^{-1}]$ has type $(\ord_{\pi}$$ d, 1)$, $[\g^{-2}]$ has type
$(2\ord_{\pi}$$ d + 1, 0)$ and $l_A([\g]) = 1 + 2\ord_{\pi}$$ d =
l_A([\g^{-2}])$. As remarked in \S \ref{volume}, $\vol([\g]) = \vol([\g^{-1}]) =
\vol([\g^{-2}])$ for $\g$ rank-one split. Consequently, we may replace
$\g$ by $\g^{-1}$ and rewrite
\begin{eqnarray*}
&~&\sum_{\substack{\g \in [\G], ~\text{not type $1$}\\ [\g]
~\text{unram. rank-one split}}}2\vol([\g])u^{l_A([\g])} +
\sum_{\substack{\g \in [\G], ~\text{not type $1$}\\ [\g] ~\text{ram.
rank-one split}}}\vol([\g])u^{l_A([\g])}(q^{\mu([\g])} -
\mu([\g])q^{\de([\g])+1})\\&=& \sum_{\g \in [\G], ~\text{ type $1$
unram. rank-one split}}2\vol([\g])u^{2l_A([\g])} + \sum_{\g \in [\G],
~\text{ type $1$ ram. rank-one split}}\vol([\g])u^{2l_A([\g])}
\\&+& \sum_{\g \in [\G], [\g] ~\text{of type} ~(m, 1),~\text{ram. ~rank-one ~split}}\vol([\g])u^{l_A([\g^2])},
\end{eqnarray*}
which can be expressed as the difference of the logarithmic
derivatives of $Z_1(X_\G, u^2)$ and $Z_2(X_\G, -u)$ by Theorem
\ref{compareZ2andZ1}.

 Together with Propositions \ref{lefthand1} and \ref{zetaassum}, this proves

\begin{proposition}\label{extraterms}
\begin{eqnarray*}\label{Pn0intrace}
& &u\frac{d}{du}\log \bigg(\frac{(1-u^3)^{\chi(X_\G)}}{ \det(I- A_1 u+q A_2 u^2 - q^3I
u^3)}\bigg)  \\
 &= &q \bigg( \sum_{n>0}
\Tr(B_{n,0})u^{n}\bigg) - (q-1)\bigg(\sum_{\substack{n,m \geq 0 \\
(n,m)\neq (0,0)}} \Tr(B_{n,m})u^{n+2m}\bigg)\bigg(\frac{1-q^2
u^3}{1-u^3}\bigg)\\
&=& u\frac{d}{du} \log Z_1(X_\G, u) + u\frac{d}{du} \log Z_1(X_\G,
u^2) - u\frac{d}{du} \log Z_2(X_\G, -u). \qquad \qquad
\end{eqnarray*}
\end{proposition}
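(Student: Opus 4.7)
The proof splits into verifying two equalities. The first, identifying the logarithmic derivative of the Euler-factor quotient with the combination of traces of $B_{n,m}$, is exactly the content of Proposition \ref{lefthand1}. So the real work is the second equality, which I would establish by substituting the explicit formulas from Propositions \ref{traceBnm} and \ref{alltypezerocycles} and then collecting terms.

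The plan is to compute
\begin{equation*}
q\Big(\sum_{n>0}\Tr(B_{n,0})u^n\Big)-(q-1)\Big(\sum_{(n,m)\neq(0,0)}\Tr(B_{n,m})u^{n+2m}\Big)\frac{1-q^2u^3}{1-u^3}
\end{equation*}
by inserting Proposition \ref{traceBnm} into the second summand and Proposition \ref{alltypezerocycles} into the first. The split contribution from Proposition \ref{traceBnm} is $\bigl(\sum P_{n,m}u^{n+2m}\bigr)\frac{1-u^3}{1-q^3u^3}$; when multiplied by $-(q-1)\frac{1-q^2u^3}{1-u^3}$, it produces $-(q-1)\bigl(\sum P_{n,m}u^{n+2m}\bigr)\frac{1-q^2u^3}{1-q^3u^3}$, which cancels exactly the leading term $q(1-q^{-1})\bigl(\sum P_{n,m}u^{n+2m}\bigr)\frac{1-q^2u^3}{1-q^3u^3}$ coming from Proposition \ref{alltypezerocycles}. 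The same factor $\frac{1-u^3}{1-q^2u^3}$ appearing in the unramified and ramified rank-one split contributions to $\Tr(B_{n,m})$ is produced precisely to cancel, up to a factor $q$, the first rank-one split term in $\Tr(B_{n,0})$.

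After these cancellations I expect to be left with exactly
\begin{align*}
&\sum_{n>0}(P_{n,0}+Q_{n,0}+R_{n,0})u^n
+\!\!\!\!\!\!\sum_{\substack{\g\in[\G],~[\g]~\text{not type $1$}\\[\g]~\text{unram. rank-one split}}}\!\!\!\!\!\!2\vol([\g])u^{l_A([\g])}\\
&\quad+\!\!\!\!\!\!\sum_{\substack{\g\in[\G],~[\g]~\text{not type $1$}\\[\g]~\text{ram. rank-one split}}}\!\!\!\!\!\!\vol([\g])u^{l_A([\g])}\bigl(q^{\mu([\g])}-\mu([\g])q^{\de([\g])+1}\bigr),
\end{align*}
since the terms involving $-2q^{-1}\vol([\g])$ and $-q^{\mu([\g])-1}+\mu([\g])q^{\de([\g])}$ in Proposition \ref{alltypezerocycles} are precisely what remain once the $\frac{1-q^2u^3}{1-u^3}$ is distributed through the rank-one split parts and combined with the $\frac{1-u^3}{1-q^2u^3}$ factors. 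By Proposition \ref{zetaassum}, the first sum is $u\frac{d}{du}\log Z_1(X_\G,u)$.

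For the two remaining sums, I would apply the involution $\g\mapsto\g^{-1}$ (or $\g\mapsto\g^{-2}$ in the ramified type $(m,1)$ case) to reindex: a non-type-$1$ unramified rank-one split $[\g]$ corresponds bijectively to a type-$1$ unramified rank-one split $[\g^{-1}]$ with $l_A([\g])=2\,l_A([\g^{-1}])$ and equal volumes (see the remark in \S\ref{volume}); a non-type-$1$ ramified rank-one split $[\g]$ with $\mu([\g])=1$ contributes zero, while those with $\mu([\g])=0$ split into two subcases (type $(0,m)$ and type $(1,m)$) which reindex via $\g^{-1}$ and $\g^{-2}$ respectively to type-$1$ ramified rank-one split classes with $l_A([\g])=2l_A([\g^{-1}])$ or $l_A([\g])=l_A([\g^{-2}])$. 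The resulting expression is exactly the right hand side of Theorem \ref{compareZ2andZ1}, which equals $u\frac{d}{du}\log Z_1(X_\G,u^2)-u\frac{d}{du}\log Z_2(X_\G,-u)$.

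The main obstacle is the combinatorial bookkeeping of the reindexing in the last paragraph: one must carefully track that the three subcases of non-type-$1$ rank-one split classes (unramified, ramified of type $(0,m)$, ramified of type $(m,1)$) assemble, after the $\g\mapsto\g^{-1}$ or $\g^{-2}$ substitution, into exactly the three families of type-$1$ rank-one split classes appearing on the right of Theorem \ref{compareZ2andZ1} with the correct multiplicities and the correct powers of $u$. Once this is verified, the proposition follows by assembling the pieces.
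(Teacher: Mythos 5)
Your proposal matches the paper's proof essentially step for step: the first equality is Proposition (lefthand1), and the second is obtained by combining Propositions (alltypezerocycles) and (traceBnm), verifying the indicated cancellations (the split $P_{n,m}$ contribution and the $u^3/(1-q^3u^3)$ pieces of the rank-one split contributions drop out, leaving $\sum(P_{n,0}+Q_{n,0}+R_{n,0})u^n$ plus the two extra rank-one-split sums), then reindexing the non-type-$1$ unramified and ramified rank-one split classes via $\g\mapsto\g^{-1}$ or $\g^{-2}$ using the volume and length relations, and finally identifying the result with the right side of Theorem (compareZ2andZ1) and invoking Proposition (zetaassum). Your bookkeeping of the $\mu([\g])=1$ degeneracy and the $(0,m)$ versus $(1,m)$ subcases is also the same as the paper's.
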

\medskip

Consequently, we have
 $$\frac{(1-u^3)^{\chi(X_\G)}}{ \det(I- A_1 u+q A_2 u^2 - q^3I
u^3)} = c \frac{Z_1(X_\G, u)Z_1(X_\G, u^2)}{Z_2(X_\G, -u)} = c
\frac{\det(1 + L_Bu)}{\det(I - L_Eu) \det(I - L_E u^2)}$$ for some
constant $c$. Here the last equality comes from Theorems
\ref{rationalZ2} and \ref{rationalZ1}. Since both sides are formal
power series with constant term $1$, we find $c=1$. This concludes
the proof of the Main Theorem.

\section{Another interpretation of the zeta identity}

\subsection{Algebraic lengths and canonical algebraic length}
Let $\G$ be  a discrete cocompact torsion-free subgroup of $\PGL_n(F)$. 
An element $\gamma \in \G$ is called {\it primitive} if it is a generator of its centralizer in $\G$.
Denote the conjugacy class of $\gamma$ in $\G$ by $\langle \gamma \rangle_\G$. Call the conjugacy class $\langle \gamma \rangle_\G$
primitive if $\g$ is. Represent elements in $\PGL_n(F)$ by minimally integral matrices, i.e., matrices in $M_n(\oo) \smallsetminus \pi M_n(\oo)$; using them we define algebraic lengths of $\gamma$  and $\langle \gamma \rangle_\G$ by
$$l_A(\gamma) =  \ord_\pi(\det( \gamma)) \quad \mbox{and} \quad l_A(\langle \gamma \rangle_\G)=\min_{g \in \langle \gamma \rangle_\G} l_A(g),$$
respectively.  Extend the definition of algebraic length to $g \in \PGL_n(L)$ for any finite extension $L$ over $F$ by
$$ l_A(g) =  \frac{1}{[L:F]} \ord_\pi( N_{L/F}\circ\det(g)), $$
where $\det(g)$ is computed using minimally integral matrix representation in $\PGL_n(L)$. Note that $l_A(g)$ is independent of the choice of the field $L$ containing entries of $g$. Analogous to canonical heights on abelian varieties, define the {\it canonical algebraic length} of $\gamma$ to be
$$ L_A(\gamma) = \lim_{n \to \infty} \frac{1}{n} l_A(\gamma^n).$$
The canonical algebraic length of $\langle \gamma \rangle_\G$, denoted $L_A(\langle \gamma \rangle_\G)$, is defined similarly.

We exhibit some properties of the canonical algebraic length for $\PGL_3(F)$.

\begin{proposition}\label{canonicallength}  The following statements hold for $\g \in \G \subset \PGL_3(F)$:

1. $L_A(\gamma)= \ord_\pi$$ abc = l_A([\g])$, where $\diag(a, b, c)$ is a minimally integral matrix conjugate to $\g$.

2. $L_A(\gamma)$ is invariant under conjugation in $\PGL_3(F)$.

3. $L_A(\gamma^n)=n L_A(\gamma)$ for integers $n \ge 1$.
\end{proposition}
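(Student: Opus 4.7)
My plan is to prove parts (3), (1), (2) in that order, since (3) is purely formal, (2) reduces to (1), and (1) carries the substantive content. Part (3) is immediate from the definition: $(\g^n)^m = \g^{nm}$, so
\[
L_A(\g^n) = \lim_{m\to\infty}\frac{l_A(\g^{nm})}{m} = n\lim_{m\to\infty}\frac{l_A(\g^{nm})}{nm} = n L_A(\g),
\]
since the limit along the subsequence $k = nm$ coincides with the full defining limit $\lim_k l_A(\g^k)/k$ once its existence is established in part (1).

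For part (1), I would apply the Classification of elements in $\G$ from \S3.3 together with condition (II): a non-identity $\g$ is either split or (unramified/ramified) rank-one split. Let $L = F\langle\g\rangle$ be the splitting field of its characteristic polynomial, either $F$ itself or a quadratic extension. Over $\GL_3(L)$, $\g$ is conjugate to a diagonal matrix of its eigenvalues, and after rescaling by a suitable element of $L^\times$ I may take this $\diag(a,b,c)$ to be minimally integral in $\PGL_3(L)$. The key observation is that this minimal integrality is inherited by all powers: the condition $\min(\ord_{\pi_L}a, \ord_{\pi_L}b, \ord_{\pi_L}c) = 0$ forces some entry, say $a$, to be a unit in $\mathcal{O}_L$, whence $a^n$ is still a unit and $\diag(a^n, b^n, c^n)$ is automatically minimally integral for every $n \ge 1$ without further rescaling. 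Therefore
\[
l_A(\g^n) = \frac{1}{[L:F]}\ord_\pi N_{L/F}(a^n b^n c^n) = n\cdot\ord_\pi abc,
\]
using the extended valuation convention $\ord_\pi x := \frac{1}{[L:F]}\ord_\pi N_{L/F}(x)$ for $x \in L$; dividing by $n$ and passing to the limit yields the first equality $L_A(\g) = \ord_\pi abc$. The second equality $\ord_\pi abc = l_A([\g])$ then follows by invoking Theorem \ref{minlength} for split $\g$ and Theorem \ref{rankoneminlength}(2) for rank-one split $\g$, both of which express $l_A([\g])$ in terms of the eigenvalue product of the rational form $r_\g$ — a quantity matching $abc$ once the $L^\times$ rescaling used to minimally integralize the diagonal form is accounted for.

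Part (2) is then an immediate corollary of (1): conjugation in $\PGL_3(F)$ preserves the multiset of eigenvalues of $\g$, and therefore preserves the diagonal form $\diag(a,b,c)$ up to permutation of entries, and in particular preserves the value $\ord_\pi abc = L_A(\g)$. The most delicate step I anticipate is the identification $\ord_\pi abc = l_A([\g])$ in the ramified rank-one split case, where the diagonal form $\diag(a, e+d\lambda, e+d\bar\lambda)$ may fail to be minimally integral and must be rescaled by a power of the uniformizer $\lambda$ of $L$; then one must reconcile the extended valuation $\ord_\pi abc$ — via $N_{L/F}(\lambda) = -c$ with $\ord_\pi c = 1$ — against the explicit formula $l_A([\g]) = \ord_\pi a(e^2 + edb - cd^2)$ of Theorem \ref{rankoneminlength}(2), case-by-case through the subcases of (1.ii).
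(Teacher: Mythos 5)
Your proposal has a genuine gap in part (1), at the step where you write $l_A(\g^n) = \frac{1}{[L:F]}\ord_\pi N_{L/F}(a^nb^nc^n)$. This tacitly assumes that $l_A$, as defined by minimally integral matrix representatives, is conjugation-invariant, so that $l_A(\g^n)$ equals $l_A(\diag(a^n,b^n,c^n))$. But $l_A$ is \emph{not} conjugation-invariant: $\g^n$ and $\diag(a,b,c)^n$ are conjugate in $\GL_3(L)$, yet the minimally integral representative of $\g^n$ need not be diagonal and its scaling can differ from that of the diagonal form. As a small illustration in the $\PGL_2$ setting, $\g = \left(\begin{smallmatrix}1 & \pi^{-1}\\ 0 & \pi\end{smallmatrix}\right)$ has eigenvalues $1, \pi$ with minimally integral diagonal form $\diag(1,\pi)$ of algebraic length $1$, yet the minimally integral representative of $\g$ is $\pi\g$, of algebraic length $3$, and more generally $l_A(\g^n) = n+2$ while $l_A(\diag(1,\pi^n)) = n$. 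The two sequences agree only in the limit, which is precisely what $L_A$ is designed to capture. Your argument, as written, would incorrectly yield $l_A(\g^n)=n$ here.

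The paper closes this gap with a sandwiching argument based on subadditivity: writing $\g = h g h^{-1}$ with $g = \diag(a,b,c)$ minimally integral, one first shows the lower bound $l_A(g^n) \le l_A(\g^n)$ (because the minimally integral representative $\pi^i\g^n$ has integral eigenvalues $\pi^i a^n, \pi^i b^n, \pi^i c^n$, and $g^n$ minimally integral forces $i \ge 0$), and then the upper bound $l_A(\g^n) = l_A(h g^n h^{-1}) \le l_A(h) + l_A(g^n) + l_A(h^{-1})$. Dividing by $n$ and letting $n\to\infty$, the $h$-dependent error terms vanish, which both establishes existence of the limit and identifies it with $\ord_\pi(abc)$. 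Your observation that minimal integrality of $\diag(a,b,c)$ persists under taking powers is a correct and useful ingredient (it gives $l_A(g^n) = n\,\ord_\pi abc$ exactly), and your treatment of (2) and (3) and the reduction to Theorems \ref{minlength} and \ref{rankoneminlength} is essentially the paper's, but you need to replace the asserted equality $l_A(\g^n) = l_A(g^n)$ with the two-sided estimate and pass to the limit.
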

\begin{proof} We know that $\g$ is diagonalizable.
Let $L = F\langle \g \rangle$ be the field generated by the eigenvalues of $\g$ over $F$. Write
 $\gamma= h g h^{-1}$ for some $g=\diag(a,b,c)$ and $h$ in $\GL_3(L)$.
We may assume that $g$ and $\pi^i\gamma$ are minimally integral for some $i \in \mathbb Z$. As the characteristic polynomial of $\pi^i \gamma$ has integral coefficients, the eigenvalues $\pi^i a$, $\pi^i b$, $\pi^i c$ of $\pi^i \g$ are all integral. Since $g$ is minimally integral, we conclude that $i \geq 0$, or equivalently, $l_A(g)\leq l_A(\gamma)$. By the same argument, we see that $l_A(g^n) \leq l_A(\gamma^n)$ for all $n>0$. On the other hand, $l_A(\gamma^n)=l_A(h g^n h^{-1})\leq l_A(h)+l_A(g^n)+l_A(h^{-1})$. Consequently,
$$ \frac{1}{n} l_A(g^n) \leq \frac{1}{n}  l_A(\gamma^n) \le \frac{1}{n}  (l_A(h)+l_A(g^n)+l_A(h^{-1}))$$
for all $n \ge 1$. This shows that $\lim_{n\to \infty} \frac{1}{n} l_A(\g^n)$ exists and is equal to $\ord_\pi$$(abc)= L_A(g)= l_A([\g])$. The last equality follows from Theorem \ref{rankoneminlength}, (2).
Since $L_A(\gamma)$ is determined by its eigenvalues, it is invariant under conjugation. Further,
$L_A(\gamma^n)=\ord_\pi$$(a^nb^nc^n)=n L_A(\gamma)$.
\end{proof}

\subsection{Ihara (group) zata functions}
 When the ambient group is $\PGL_2(F)$, in \cite{Ih} Ihara defined the zeta function, using primitive conjugacy classes in $\G$,  as
$$ Z(\G,u) = \prod_{\langle \gamma \rangle_\G} (1-u^{l_A(\langle \gamma \rangle_\G)})^{-1},$$
where $\langle \gamma \rangle_\G$ runs through primitive conjugacy classes in $\G$.

Recall that an element is primitive in $\Gamma \subset \PGL_2(F)$ if and only if it is not an $m$th power of some element in $\Gamma$ with $m \ge 2$. This is not true for $\PGL_3(F)$. Instead, we have
\begin{lemma} \label{primitiveandrankonesplit}
A regular element $\gamma \in \G \subset \PGL_3(F)$ is primitive  if and only if $\gamma$ is rank-one split
and not an $m$th power of some element in $\Gamma$ with $m \ge 2$.
\end{lemma}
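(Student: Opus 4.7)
The plan is to leverage the description of centralizers already obtained in Proposition \ref{centralizer} and translate the definition of primitivity (being a generator of $C_\G(\g)$) directly into the $m$th power condition. Since $\g$ is assumed regular, by the Classification Theorem $\g$ falls into exactly one of two cases: split, or rank-one split (irregular and identity are excluded).

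First I would handle the split case: if $\g$ is split, then Proposition \ref{centralizer}(2) says $C_\G(\g)$ is a free abelian group of rank two. A rank-two free abelian group cannot be generated by a single element, so $\g$ cannot be a generator of $C_\G(\g)$, hence $\g$ is not primitive. This already forces any primitive element to be rank-one split, establishing one implication of the ``rank-one split'' part of the equivalence.

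Next, for the rank-one split case, Proposition \ref{centralizer}(1) gives $C_\G(\g) \cong \mathbb{Z}$. Now I translate: $\g$ is primitive iff $\g$ is a generator of $C_\G(\g)$, iff $\g$ corresponds to $\pm 1$ under any isomorphism $C_\G(\g) \cong \mathbb{Z}$. For the forward direction, suppose $\g = \beta^m$ for some $\beta \in \G$ and $m \ge 2$. Then $\beta$ commutes with $\g$, so $\beta \in C_\G(\g)$; writing $\beta \leftrightarrow k \in \mathbb{Z}$, we get $\g \leftrightarrow mk$ with $|mk| \ge 2$, so $\g$ is not $\pm 1$ in $\mathbb{Z}$ and hence not primitive. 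Conversely, if $\g$ is not primitive, pick a generator $\de$ of $C_\G(\g)$; then $\g = \de^m$ with $|m| \ge 2$, and replacing $\de$ by $\de^{-1}$ if necessary we may assume $m \ge 2$, realizing $\g$ as an $m$th power in $\G$.

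I do not anticipate a serious obstacle: the lemma is essentially a direct corollary of the centralizer computations in Proposition \ref{centralizer}, once one observes that primitivity (``generator of the centralizer'') is a purely group-theoretic condition on $C_\G(\g)$. The only mild care needed is the sign issue in reducing ``$\g$ is some power'' to ``$\g$ is an $m$th power with $m \ge 2$,'' which is handled by passing to $\de^{-1}$ if the exponent is negative.
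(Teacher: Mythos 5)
Your proof is correct and follows essentially the same route as the paper: use Proposition~\ref{centralizer} to identify $C_\G(\g) \cong \Z^2$ (split) or $\Z$ (rank-one split), and then translate ``generator of $C_\G(\g)$'' into the $m$th-power condition. The paper states this very tersely; you have simply filled in the elementary group-theoretic details, including the sign reduction, which the paper leaves implicit.
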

\begin{proof}
By Proposition \ref{centralizer},  $C_\G(\g) \cong \Z^2$  if $\gamma$ is split and
 $C_\G(\g) \cong \Z$  if $\gamma$ is rank-one split. Therefore, $\gamma$ is a generator of $C_\G(\g)$ if and only if $\gamma$ is rank-one split and not an $m$th power of some element in $\Gamma$ with $m>1$.
\end{proof}

Note that in $\PGL_2(F)$ the canonical algebraic lengths of primitive $g$ and $g^{-1}$ are the same; this no longer holds in $\PGL_3(F)$. An element $g \in \PGL_3(F)$ is said to have {\it positive} type if $L_A(g) \leq L_A(g^{-1})$, and {\it negative} type otherwise.
Given a regular discrete cocompact torsion-free subgroup $\G$ of $\PGL_3(F)$, define the positive/negative group zeta function of $\Gamma$ by
$$ Z_{\pm}(\G,u) = \prod_{\langle \gamma \rangle_\G} (1-u^{L_A(\langle \gamma \rangle_\G)})^{-1},$$
where $\langle \gamma \rangle_\G$ runs through all conjugacy classes of primitive elements in $\G$ of positive/negative type. We define the Ihara group zeta function of $\G$ by combining them together:
$$ Z(\G,u) = Z_+(\G, u) Z_-(\G, u) = \prod_{\langle \gamma \rangle_\G} (1-u^{L_A(\langle \gamma \rangle_\G)})^{-1} = \prod_{\langle \gamma \rangle_\G} (1-u^{L_A(\gamma)})^{-1},$$
where $\langle \gamma \rangle_\G$ runs through all primitive conjugacy classes of $\G$ and the last equality follows from
Proposition \ref{canonicallength}, (2). Recall from Lemma \ref{primitiveandrankonesplit} that such $\g$'s are rank-one split.

Let $\g$ be a (rank-one) primitive element in $\G$ of positive type and with rational form $r_\g$. Let
$a, b, c$ be eigenvalues of $r_\g$ up to a constant multiple. If $\g$ is unramified, we may assume
$a \in F^\times$ and $b$ and $c$ are units in an unramified quadratic extension of $F$.
We see from \S\ref{centralizers} that $C_{P_\g^{-1}\G P_\g}(r_\g)K= \langle r_\g \rangle K = \langle \diag(a, 1, 1)\rangle K = \langle \diag(\pi^{\vol([\gamma])},1,1)\rangle K$. This implies that $\ord_\pi$$ a = \vol([\g])$ since $\g$ has positive type. Thus $\diag(a, b, c)$ is a minimally integral matrix representing $\g$ and by Proposition \ref{canonicallength} we have $L_A(\g) = L_A(r_\g) = \ord_\pi$$ abc = \vol([\g])$.
Further, $L_A(\g^{-1})=2L_A(\g)=2\vol([\g])$, so $\g^{-1}$ has negative type.

Next assume that $\g$ is ramified. We distinguish two cases as in \S\ref{centralizers}. In case (i)
 where $b$ and $c$ are units in a ramified quadratic extension of $F$, we have $C_{P_\g^{-1}\G P_\g}(r_\g)K= \langle r_\g \rangle K=
 \langle \diag(a, 1, 1) \rangle K = \langle \diag(\pi^{\vol([\gamma])/2},1,1) \rangle K$. By the same argument as unramified case, we get $L_A(\g)=\vol([\gamma])/2$. For case (ii) where $b$ and $c$ are uniformizers in a ramified quadratic extension of $F$, up to a constant multiple, eigenvalues of $r_\g^2$ are $a^2/\pi, b', c'$, where $b'$ and $c'$ are units. We have $\langle r_{\g}^2 \rangle K= \langle \diag(\pi^{\vol([\g])},1,1)\rangle K = \langle \diag(a^2/\pi, 1,1)\rangle K$. Again, since $\g$ has positive type, we get $\ord_\pi$$ a^2 = 1 + \vol([\g])$ and $L_A(r_\g^2) = \ord_\pi$$ (a^2 b'c'/\pi)= \vol([\g])$. By Proposition \ref{canonicallength}, we conclude $L_A(\g)=\frac{1}{2}L_A(r_\g^2) =  \vol([\g])/2$.
 For both cases $L_A(\g^{-1})=2L_A(\g)=\vol([\g])$ so that $\g^{-1}$ has negative type.

We have shown that, a primitive $\g$ in $\G$ has positive type if and only if $\g^{-1}$ has negative type
with $L_A(\g^{-1})=2L_A(\g)$. This proves
\begin{proposition}
$ Z(\G,u)=Z_{+}(\G,u)Z_{-}(\G,u)= Z_{+}(\G,u)Z_{+}(\G,u^2)$.
\end{proposition}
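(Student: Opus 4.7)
The first equality $Z(\G,u) = Z_+(\G,u) Z_-(\G,u)$ is essentially definitional, but the plan is to justify it by showing that every primitive conjugacy class falls into exactly one of the two categories. By Lemma \ref{primitiveandrankonesplit}, every primitive $\g \in \G$ is rank-one split, and the discussion preceding the proposition shows that in all cases (unramified, and both subcases of ramified) a primitive element of positive type $\g$ satisfies $L_A(\g^{-1}) = 2L_A(\g)$. Since $\vol([\g]) \geq 1$, we have $L_A(\g) > 0$, so $L_A(\g) < L_A(\g^{-1})$ is strict. Consequently no primitive conjugacy class can be simultaneously positive and negative, and $\{\langle \g \rangle_\G \text{ primitive}\}$ splits as the disjoint union of positive and negative classes, giving the first equality.

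For the second equality $Z_-(\G,u) = Z_+(\G,u^2)$, the plan is to exhibit an explicit bijection between primitive conjugacy classes of positive and negative type and track the effect on $L_A$. First I would observe that $\g \in \G$ is primitive if and only if $\g^{-1}$ is primitive, since $C_\G(\g) = C_\G(\g^{-1})$ and $\g$ generates this cyclic group precisely when $\g^{-1}$ does. Moreover, inversion descends to a well-defined involution on conjugacy classes, $\langle \g \rangle_\G \mapsto \langle \g^{-1}\rangle_\G$. Combined with the observation above that $\g$ has positive type iff $\g^{-1}$ has negative type, this involution restricts to a bijection between primitive positive-type conjugacy classes and primitive negative-type conjugacy classes.

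Next, I would use the identity $L_A(\langle \g^{-1}\rangle_\G) = 2 L_A(\langle \g \rangle_\G)$ valid on positive-type primitive classes (this follows from the case analysis already carried out in the paragraphs immediately before the proposition, combined with Proposition \ref{canonicallength}(2), which ensures $L_A$ is a class invariant). Then the substitution in the Euler product gives
\begin{align*}
Z_-(\G,u) &= \prod_{\langle \beta \rangle_\G \text{ prim., neg.}}\!\!\! (1 - u^{L_A(\langle \beta \rangle_\G)})^{-1}
= \prod_{\langle \g \rangle_\G \text{ prim., pos.}}\!\!\! (1 - u^{L_A(\langle \g^{-1} \rangle_\G)})^{-1} \\
&= \prod_{\langle \g \rangle_\G \text{ prim., pos.}}\!\!\! (1 - u^{2 L_A(\langle \g \rangle_\G)})^{-1} = Z_+(\G, u^2),
\end{align*}
where the reindexing $\beta = \g^{-1}$ uses the bijection established above.

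There is no real obstacle here: all the substantive content (diagonalizability, structure of centralizers, the formula $L_A(\g^{-1}) = 2L_A(\g)$ on positive-type classes, and the relation between canonical length and $\vol([\g])$) has been assembled in the preceding sections. The only point requiring care is the mutual exclusivity of positive and negative types for primitive elements, which relies on the strict inequality $L_A(\g) > 0$ for any primitive rank-one split $\g$, itself a consequence of $\vol([\g]) \geq 1$.
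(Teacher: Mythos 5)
Your proof is correct and takes essentially the same approach as the paper: the paper's argument is the observation (established in the discussion immediately preceding the proposition) that a primitive $\g$ has positive type if and only if $\g^{-1}$ has negative type with $L_A(\g^{-1}) = 2L_A(\g)$, and you fill in the routine bookkeeping — that inversion preserves primitivity, descends to conjugacy classes, and gives the required bijection between positive- and negative-type classes so the Euler products match up under $u \mapsto u^2$. One minor remark: the mutual exclusivity of positive and negative types is automatic from the definitions ($\le$ versus $>$) and does not require the strict inequality you derive from $L_A(\g) > 0$; that strictness is in fact what you need in the second paragraph, to guarantee that $\g^{-1}$ is genuinely negative (not merely positive) when $\g$ is positive, so the involution really does swap the two sides.
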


\subsection{Another interpretation of the zeta identity}
Theorem \ref{compareZ2andZ1} can be rewritten as
\begin{eqnarray*}
& & u\frac{d}{du}\log Z_1(X_\G, u^2) - u\frac{d}{du}\log Z_2(X_\G,
-u)\\
&=&\sum_{ ~[\g] ~{\rm  primitive,~positive~type} } ~\sum_{n \ge 1} 2L_A(\g) u^{2nL_A(\g)}=\sum_{ ~[\g] ~{\rm primitive,~negative~type} } ~\sum_{n \ge 1} L_A(\g) u^{nL_A(\g)}\\
&=& u\frac{d}{du}\log\left( \prod_{ ~[\g] ~{\rm primitive,~negative~type} } (1- u^{L_A(\g)})^{-1} \right)=  u\frac{d}{du}\log Z_-(\G,u).
\end{eqnarray*}
After comparing the constant terms, we conclude $Z_-(\G,u)= Z_1(X_\G,u^2)/Z_2(X_\G,-u)$. Combined with Theorems \ref{rationalZ1} and \ref{rationalZ2}, we obtain
\begin{theorem} $Z_-(\G,u)$ and $Z(\G,u)$ are rational functions with the following closed forms:
$$ Z_-(\G,u)=  \frac{\det(1+L_Bu)}{\det(1-L_Eu^2)}\quad \mbox{and} \quad Z(\G,u)= \frac{\det(1+L_Bu)\det(1+L_Bu^{1/2})}{\det(1-L_Eu)\det(1-L_Eu^2)}.$$
\end{theorem}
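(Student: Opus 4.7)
The plan is to combine Theorem~\ref{compareZ2andZ1} with the analysis of canonical algebraic length versus $\vol([\g])$ carried out in \S\ref{centralizers} and the paragraphs preceding the statement, then invoke the closed forms from Theorems~\ref{rationalZ1} and~\ref{rationalZ2}.

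First I would rewrite each of the three series on the right side of Theorem~\ref{compareZ2andZ1} in terms of primitive negative-type conjugacy classes. By Lemma~\ref{primitiveandrankonesplit} every regular primitive element in $\G$ is rank-one split, and by Proposition~\ref{canonicallength} together with the equality $\vol([\g^m])=\vol([\g])$ from \S\ref{volume}, any rank-one split $\g$ writes as $\beta^k$ for a unique primitive $\beta$ with $L_A(\g)=kL_A(\beta)$ and $\vol([\g])=\vol([\beta])$. The analysis above the statement shows that for a primitive positive-type $\beta$ one has $L_A(\beta)=\vol([\beta])$ in the unramified case and $L_A(\beta)=\vol([\beta])/2$ in the ramified case, and that $\beta\mapsto \beta^{-1}$ is a bijection between primitive positive and primitive negative conjugacy classes with $L_A(\beta^{-1})=2L_A(\beta)$. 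Using these relations, each of the three series of Theorem~\ref{compareZ2andZ1} reorganizes, under $\g\mapsto\g^{-1}$, into $\sum_{k\ge 1}L_A(\beta')u^{kL_A(\beta')}$ summed over primitive negative-type $\beta'$ of the matching ramification class.

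Summing the three contributions gives
\begin{equation*}
u\frac{d}{du}\log Z_1(X_\G,u^2)-u\frac{d}{du}\log Z_2(X_\G,-u)=\sum_{\beta'\text{ prim.\ neg.}}L_A(\beta')\frac{u^{L_A(\beta')}}{1-u^{L_A(\beta')}}=u\frac{d}{du}\log Z_-(\G,u).
\end{equation*}
Integration plus comparison of constant terms (all three formal series have constant term $1$) yields $Z_-(\G,u)=Z_1(X_\G,u^2)/Z_2(X_\G,-u)$. Substituting the closed forms of $Z_1$ and $Z_2$ from Theorems~\ref{rationalZ1} and~\ref{rationalZ2} delivers the first displayed identity. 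For the second, the proposition immediately preceding the theorem provides $Z_-(\G,u)=Z_+(\G,u^2)$, whence $Z_+(\G,u)=Z_-(\G,u^{1/2})=\det(I+L_Bu^{1/2})/\det(I-L_Eu)$; multiplying $Z_+(\G,u)$ by $Z_-(\G,u)$ then produces the asserted formula for $Z(\G,u)$.

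The main obstacle lies in the reorganization step, and specifically in the ramified case (ii) of \S\ref{centralizers}, where a primitive positive $\beta$ has $\beta^2$ conjugate up to scalar to $\diag(\pi^{\vol([\beta])},1,1)$ with $\vol([\beta])$ odd. This is precisely why the type $(3n+1,1)$ contributions enter Theorem~\ref{compareZ2andZ1} with exponent $u^{l_A([\g^2])}$ rather than $u^{2l_A([\g])}$, and why half-integer exponents $L_A(\beta')/2\in\tfrac12\mathbb Z$ appear in $Z_-(\G,u)$ via $u^{1/2}$. One must verify that, case by case, the weights $2\vol([\g])$, $\vol([\g])$, $\vol([\g])$ occurring in the three series of Theorem~\ref{compareZ2andZ1} combine with the relations $L_A(\beta^{-1})=2L_A(\beta)$ and $L_A(\beta)=\vol([\beta])$ or $\vol([\beta])/2$ to produce exactly the coefficient $L_A(\beta')$ in the logarithmic derivative of $Z_-(\G,u)$; once this accounting is made consistent, the remainder of the argument is formal.
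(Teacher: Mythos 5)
Your proposal is correct and matches the paper's argument in all essentials: both rewrite the right-hand side of Theorem~\ref{compareZ2andZ1} as a sum over primitive conjugacy classes by expressing each rank-one split $\g$ as a power of a primitive generator, use the relations $L_A(\g)=kL_A(\beta)$, $\vol([\g])=\vol([\beta])$, $L_A(\beta)=\vol([\beta])$ (unramified) or $\vol([\beta])/2$ (ramified), and $L_A(\beta^{-1})=2L_A(\beta)$ to identify the result with $u\frac{d}{du}\log Z_-(\G,u)$, then integrate and substitute Theorems~\ref{rationalZ1} and~\ref{rationalZ2}. The only cosmetic difference is that the paper first reindexes by primitive positive-type classes and then converts to negative type, whereas you go directly to negative type; your derivation of the $Z(\G,u)$ formula via $Z_+(\G,u)=Z_-(\G,u^{1/2})$ is also exactly what the paper intends.
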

This gives another interpretation of the zeta identity.
\begin{theorem}[Another zeta identity]\label{newzetaidentity} Let $X_\G = \G \backslash \PGL_3(F)/ \PGL_3(\oo)$. Then
$$\frac{(1-u^3)^{\chi(X_\G)}}{\det(I-A_1u+qA_2u^2-q^3u^3 I)} = Z_1(X_\G,u)Z_-(\G,u).$$
\end{theorem}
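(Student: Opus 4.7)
The plan is to derive Theorem \ref{newzetaidentity} directly from the identity (\ref{zetaidentity}) in the Main Theorem together with the closed-form expressions for $Z_1(X_\G, u)$ and $Z_-(\G, u)$ that have already been established. No new geometric or counting argument is needed; the content is a short algebraic manipulation, and the real work was done in the preceding sections.

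First I would recall the two ingredients. From Theorem \ref{rationalZ1} we have $Z_1(X_\G, u) = 1/\det(I - L_E u)$. From the theorem immediately preceding the statement, proved via Theorem \ref{compareZ2andZ1} together with the definition of $Z_-(\G,u)$ as an Euler product over primitive conjugacy classes of negative type, we have
\begin{equation*}
Z_-(\G, u) = \frac{\det(I + L_B u)}{\det(I - L_E u^2)}.
\end{equation*}
Multiplying these gives
\begin{equation*}
Z_1(X_\G, u) \, Z_-(\G, u) = \frac{\det(I + L_B u)}{\det(I - L_E u)\det(I - L_E u^2)}.
\end{equation*}

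Next I would invoke the operator identity (\ref{zetaidentity}) from the Main Theorem, namely
\begin{equation*}
\frac{(1-u^3)^{\chi(X_\G)}}{\det(I - A_1 u + q A_2 u^2 - q^3 u^3 I)} = \frac{\det(I + L_B u)}{\det(I - L_E u)\det(I - (L_E)^t u^2)},
\end{equation*}
and use the elementary fact that $\det(I - (L_E)^t u^2) = \det(I - L_E u^2)$ since transposition does not affect the characteristic polynomial. Chaining the two displayed equalities yields Theorem \ref{newzetaidentity}.

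There is no hard step here: the entire proof is a substitution, and the result is best viewed as a repackaging of (\ref{zetaidentity}) in which the chamber factor $\det(I + L_B u)$ and the type-$2$ edge factor $\det(I - L_E u^2)$ are absorbed into the single group-theoretic object $Z_-(\G, u)$. The substantive content lies upstream, in Theorem \ref{compareZ2andZ1} (which matches the extra boundary terms from rank-one split conjugacy classes to the ratio $Z_1(X_\G, u^2)/Z_2(X_\G, -u)$) and in Proposition \ref{canonicallength} (which ensures $L_A$ is well-defined and behaves correctly under inversion, so that the Euler product $Z_-(\G, u)$ has the claimed exponent structure). Once those are in place, the identity in the statement is immediate.
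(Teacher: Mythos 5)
Your proposal is correct and follows the same route as the paper: Theorem \ref{newzetaidentity} is indeed obtained by substituting the already-established closed forms $Z_1(X_\G,u)=1/\det(I-L_Eu)$ and $Z_-(\G,u)=\det(I+L_Bu)/\det(I-L_Eu^2)$ into the operator identity (\ref{zetaidentity}) from the Main Theorem, using $\det(I-(L_E)^tu^2)=\det(I-L_Eu^2)$. The paper phrases the final step as ``this gives another interpretation of the zeta identity,'' which is exactly the substitution you spell out.
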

The right hand side gives an Euler product expression of the left hand side. Note that $Z_1(X_\G,u)$ is defined geometrically and $Z_-(\G,u)$ algebraically. More precisely,  $Z_1(X_\G,u)$ involves half of straight closed geodesics, namely, those of type $1$, while $Z_-(\G,u)$ involves half of primitive conjugacy classes, namely, those of negative type. For $\PGL_2(F)$,  these two kinds of expressions are equivalent, so that we have both algebraic and geometric interpretations of the Ihara zeta identity for graphs. For $\PGL_3(F)$,
the zeta identity cannot be expressed solely algebraically or geometrically. Indeed it encodes both kinds of information at the same time.

\end{document}